
\documentclass{article}
\usepackage{amsfonts,amsmath,amsthm,amscd,amssymb,latexsym}
\usepackage{amsfonts,amscd,dsfont,amstext}
\usepackage[normalem]{ulem}
\input{xypic.tex} \xyoption{all}
\usepackage[pdftex]{graphicx}

 \newcommand{\C}{\mathbb{C}}
\newcommand{\Z}{\mathbb{Z}}

\newcommand{\pn}{{\mathbb{P}^n}}
\newcommand{\opn}{{\cal O}_{\mathbb{P}^n}}

\newcommand{\oh}{{\cal O}} 
\newcommand{\im}{{\rm im}~} 

\newcommand{\E}{\mathcal{E}}
\newcommand{\F}{\mathcal{F}}
\newcommand{\vv}{\mathbf{v}} \newcommand{\ww}{\mathbf{w}}
\DeclareMathOperator{\rk}{rk}

\newtheorem{theorem}{Theorem}[section]

\newtheorem{proposition}[theorem]{Proposition}
\newtheorem{lemma}[theorem]{Lemma}

\newtheorem{remark}[theorem]{Remark}

\newtheorem{definition}[theorem]{{\bf Definition}}

\title{Vector bundles on projective varieties and representations of quivers}
\author{Marcos Jardim and Daniela M. Prata \\ IMECC -- UNICAMP \\ 
Rua S\'ergio Buarque de Holanda, 651 \\ 
Campinas, SP, Brazil CEP 13083-859}

\begin{document}

\maketitle

\begin{abstract}
We present equivalences between certain categories of vector bundles on projective varieties, namely \emph{cokernel bundles}, \emph{Steiner bundles}, \emph{syzygy bundles}, and \emph{monads}, and full subcategories of representations of certain quivers. As an application, we provide decomposability criteria for such bundles.
\end{abstract}

\section{Introduction}

Vector bundles over algebraic varieties play a central role in algebraic geometry, and many interesting problems are still open. In particular, constructing indecomposable vector bundles on a variety $X$ with rank smaller than the $\dim X$ is not an easy task for certain choices of $X$, especially for projective spaces.

Monads are one of the most important tools for constructing such bundles; indeed, the majority of examples of low rank bundles on projective spaces, namely the Horrocks--Mumford bundle of rank $2$ on $\mathbb{P}^4$, Horrocks' parent bundle of rank $3$ on
$\mathbb{P}^5$, and the rank $2k$ instanton bundles on $\mathbb{P}^{2k+1}$, are obtained as cohomologies of certain monads.

The goal of this paper is to show that the theory of representations of quivers might also be an interesting tool for the construction of useful monads and cokernel bundles on projective varieties. More precisely, we present equivalences between certain categories of
vector bundles on projective varieties and full subcategories of
representations of certain quivers. In this way, we translate the problems
of constructing indecomposable vector bundles on $\pn$ with low rank into a
(possibly still very hard) problem of linear algebra. As an application of these 
results, we give decomposability criteria for cokernel bundles, syzygy bundles and monads.

Let us now present more precisely the results proved here, starting with \emph{cokernel bundles},
a class a vector bundles introduced by Brambilla in \cite{Bb}. Let $X$ be a nonsingular projective variety of dimension $n$, and let $\mathcal{E}$ and $\mathcal{F}$ be simple vector bundles on $X$ such that
\begin{itemize}
\item[(i)] ${\rm Hom}(\mathcal{F},\mathcal{E}) = {\rm Ext}^{1}(\mathcal{F},\mathcal{E}) =0$;
\item[(ii)] $\mathcal{E}^{\lor} \otimes \F$ is globally generated;
\item[(iii)] $\dim{\rm Hom}(\mathcal{E}, \mathcal{F})\le3$.
\end{itemize}
A \emph{cokernel bundle of type $(\mathcal{E},\mathcal{F})$} on $X$ is a vector bundle $\mathcal{C}$ with a resolution of the form
$$\xymatrix{
0 \ar[r] & \mathcal{E}^a \ar[r]^{\alpha} & \mathcal{F}^b \ar[r] & \mathcal{C} \ar[r] & 0
}.$$
We prove (cf. Thm \ref{Teo1} below):

\begin{theorem}
The category of cokernel bundles of type $(\mathcal{E},\mathcal{F})$ is equivalent to a full subcategory of the category of representation of the Kronecker quiver with 
$w = \dim{\rm{Hom}}(\mathcal{E,F})$ arrows:
$$ \xymatrix{
\bullet \ar@<1.8ex>[r]^1 \ar@<-1.8ex>[r]^{\vdots}_w & \bullet
} $$
\end{theorem}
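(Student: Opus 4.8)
The plan is to turn the statement into a formal consequence of hypothesis (i) together with the simplicity of $\mathcal{E}$ and $\mathcal{F}$. First I would fix a basis $\phi_{1},\dots,\phi_{w}$ of the $w$-dimensional vector space $\mathrm{Hom}(\mathcal{E},\mathcal{F})$, which is finite-dimensional because $X$ is projective. For any $a,b$ one has a canonical isomorphism $\mathrm{Hom}(\mathcal{E}^{a},\mathcal{F}^{b})\cong\mathrm{Hom}(\mathcal{E},\mathcal{F})\otimes_{k}\mathrm{Hom}_{k}(k^{a},k^{b})$, so a morphism $\alpha\colon\mathcal{E}^{a}\to\mathcal{F}^{b}$ is the same datum as a tuple $(A_{1},\dots,A_{w})$ of linear maps $A_{i}\colon k^{a}\to k^{b}$, through $\alpha=\sum_{i}\phi_{i}\otimes A_{i}$ --- that is, precisely a representation of the $w$-Kronecker quiver $Q$ with $k^{a}$ at the source, $k^{b}$ at the target, and $A_{i}$ on the $i$-th arrow. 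Accordingly I would let $\mathcal{R}\subseteq\rep(Q)$ be the full subcategory of representations $V=(V_{1},V_{2},(f_{i}))$ for which the induced sheaf map $\Psi_{V}:=\sum_{i}\phi_{i}\otimes f_{i}\colon V_{1}\otimes\mathcal{E}\to V_{2}\otimes\mathcal{F}$ is injective with locally free cokernel, and define a functor $G\colon\mathcal{R}\to\{\,\text{cokernel bundles of type }(\mathcal{E},\mathcal{F})\,\}$ by $G(V)=\mathrm{coker}\,\Psi_{V}$ on objects and, on a morphism $(h_{1},h_{2})\colon V\to V'$, by the morphism induced on cokernels by the chain map $(h_{1}\otimes\mathrm{id}_{\mathcal{E}},\,h_{2}\otimes\mathrm{id}_{\mathcal{F}})$ --- which is a chain map exactly because $h_{2}f_{i}=f_{i}'h_{1}$. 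Then $\mathcal{R}$ is the full subcategory of $\rep(Q)$ claimed in the statement, and it suffices to show $G$ is an equivalence of categories.

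Essential surjectivity is immediate: a resolution $0\to\mathcal{E}^{a}\xrightarrow{\alpha}\mathcal{F}^{b}\to\mathcal{C}\to0$ exhibits $\mathcal{C}$ as $G$ of the representation attached to $\alpha$, which lies in $\mathcal{R}$ since $\alpha$ is injective with locally free cokernel $\mathcal{C}$. The substance is full faithfulness, which I would get from a unique-lifting statement. Given $V,V'\in\mathcal{R}$ with cokernels $\mathcal{C},\mathcal{C}'$ and an arbitrary morphism $c\colon\mathcal{C}\to\mathcal{C}'$: (1) lift the composite $V_{2}\otimes\mathcal{F}\twoheadrightarrow\mathcal{C}\xrightarrow{c}\mathcal{C}'$ to a map $\widetilde{h}\colon V_{2}\otimes\mathcal{F}\to V_{2}'\otimes\mathcal{F}$ --- the obstruction lies in $\mathrm{Ext}^{1}(V_{2}\otimes\mathcal{F},\,V_{1}'\otimes\mathcal{E})\cong\mathrm{Ext}^{1}(\mathcal{F},\mathcal{E})\otimes_{k}\mathrm{Hom}_{k}(V_{2},V_{1}')=0$ by (i); (2) since $\mathcal{F}$ is simple, $\widetilde{h}=h_{2}\otimes\mathrm{id}_{\mathcal{F}}$ for a unique $h_{2}\colon V_{2}\to V_{2}'$, and $h_{2}$ is independent of the lift because two lifts differ by a map $V_{2}\otimes\mathcal{F}\to V_{1}'\otimes\mathcal{E}$ and $\mathrm{Hom}(\mathcal{F},\mathcal{E})=0$, again by (i); (3) the composite $\widetilde{h}\circ\Psi_{V}$ becomes zero in $\mathcal{C}'$, hence factors through the monomorphism $\Psi_{V'}$, yielding a map $V_{1}\otimes\mathcal{E}\to V_{1}'\otimes\mathcal{E}$, which by simplicity of $\mathcal{E}$ is $h_{1}\otimes\mathrm{id}_{\mathcal{E}}$ for a unique $h_{1}\colon V_{1}\to V_{1}'$; (4) comparing the coefficients of the linearly independent $\phi_{i}$ in $\Psi_{V'}\circ(h_{1}\otimes\mathrm{id})=(h_{2}\otimes\mathrm{id})\circ\Psi_{V}$ gives $f_{i}'h_{1}=h_{2}f_{i}$, so $(h_{1},h_{2})$ is a morphism $V\to V'$ in $\mathcal{R}$, and $G(h_{1},h_{2})=c$ by construction. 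Surjectivity of $G$ on Hom-sets is this construction; injectivity is the uniqueness of $(h_{1},h_{2})$ coming from steps (2) and (3). Hence $G$ is fully faithful, and therefore an equivalence between $\mathcal{R}$ and the category of cokernel bundles.

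The only step that is not routine bookkeeping is the lifting in (1)--(3), and I expect it to be the main point: the vanishing $\mathrm{Hom}(\mathcal{F},\mathcal{E})=\mathrm{Ext}^{1}(\mathcal{F},\mathcal{E})=0$ of (i) is used precisely twice, to make the lift both exist and be unique, and the simplicity of $\mathcal{E}$ and $\mathcal{F}$ is what forces the lift to have the special form $h\otimes\mathrm{id}$ and hence to descend to a genuine morphism of quiver representations. Projectivity of $X$ enters only to make $w$ finite so that $Q$ and $\mathcal{R}$ are meaningful. Hypotheses (ii) and (iii) play no role in the equivalence argument --- the proof is verbatim valid for any $w$ --- but (ii) guarantees that a generic $\alpha$ is injective with locally free cokernel, so that $\mathcal{R}$ is non-trivial and agrees with Brambilla's construction, and the bound $w\le 3$ is the range in which the subsequent decomposability applications are carried out.
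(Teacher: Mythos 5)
Your proof is correct and follows essentially the same route as the paper's: the same functor on the same full subcategory, essential surjectivity by decomposing $\alpha$ in the chosen basis, and full faithfulness via the lifting argument that uses ${\rm Hom}(\mathcal{F},\mathcal{E})={\rm Ext}^{1}(\mathcal{F},\mathcal{E})=0$ for existence and uniqueness of the lift and simplicity of $\mathcal{E}$, $\mathcal{F}$ to force the lifts to have the form $h\otimes\mathrm{id}$. (One stray remark: the relevant hypothesis in the body of the paper is $w\ge 3$, not $w\le 3$, but this does not affect your argument.)
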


As application of this equivalence, we obtain new proofs of simplicity and exceptionality criteria for cokernel bundles that were originally established by Brambilla in \cite[Thm. 4.3]{Bb} (cf. Thm \ref{decon} below) and Soares in \cite[Theorem 2.2.7]{Soa} (cf. Cor \ref{corSteiner} below).

Next, we consider 1$^{\rm st}$-syzygy bundles on projective spaces; recall that  \emph{syzygy bundles} are those given as kernel of surjective morphisms of the form
$$ \opn(-d_1)^{a_1}\oplus \cdots \oplus \opn(- d_m)^{a_m} \stackrel{\alpha}{\rightarrow} \opn^c .$$ 
Let $\mathcal{G}:=\ker\alpha$; we refer to \cite{Br} as a general reference on syzygy bundles.

The case $m=1$ can be regarded as a cokernel bundle; for the remainder of the paper, we focus on the case $m=2$, though it is not hard to generalize our results for $m>2$ (see Remark \ref{m>2} below). More precisely, we prove the following result, including a new decomposability criterion for syzygy bundles.

\begin{theorem} \label{thmsyzygy}
For any fixed integers $d_1>d_2>0$, there is a faithful functor from the category of representations of the quiver 
$$ \xymatrix{
\bullet \ar@<1.8ex>[r]^{1} \ar@<-1.8ex>[r]^{\vdots}_{w_1} & \bullet &
\ar@<1.8ex>[l]_{\vdots}^{w_2} \ar@<-1.8ex>[l]_{1} \bullet
}$$
to the category of syzygy bundles given by sequences of the form
\begin{equation} \label{syzmodel}
0\to \mathcal{G} \to \opn(-d_1)^{a_1}\oplus \opn(- d_2)^{a_2} \stackrel{\alpha}{\rightarrow}
\opn^c \to 0
\end{equation}
where $w_j=h^0(\opn(d_j))$, $j=1,2$. Moreover, if $a_1^2+ a_2^2 + c^2 - w_1 a_1 c - w_2 a_2 c > 1$, then $\mathcal{G}$ is decomposable.
\end{theorem}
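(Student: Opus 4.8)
The plan is to proceed in three steps: write down the functor explicitly, prove it is faithful using the hypothesis $d_1>d_2>0$, and then deduce the decomposability statement from Kac's theorem on dimension vectors of indecomposable quiver representations. (I take $n\ge2$, the case $n=1$ being trivial since every bundle on $\pn$ splits.)

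For the construction I would first fix bases $s^{(j)}_1,\dots,s^{(j)}_{w_j}$ of $H^0(\opn(d_j))=\mathrm{Hom}(\opn(-d_j),\opn)$ for $j=1,2$; this is where the conditions $w_j=h^0(\opn(d_j))$ enter. To a representation with vector spaces $V_1,V_2,V_3$ at the three vertices (put $a_1=\dim V_1$, $c=\dim V_2$, $a_2=\dim V_3$) and arrow maps $f_k\colon V_1\to V_2$ $(1\le k\le w_1)$, $g_l\colon V_3\to V_2$ $(1\le l\le w_2)$, I associate
\[
\alpha=\left(\sum_{k} s^{(1)}_k\otimes f_k,\ \sum_{l} s^{(2)}_l\otimes g_l\right)\colon\ \opn(-d_1)^{a_1}\oplus\opn(-d_2)^{a_2}\longrightarrow\opn^{c},
\]
and on the full subcategory of representations for which $\alpha$ is surjective — equivalently, for which $\mathcal{G}:=\ker\alpha$ is a vector bundle fitting into an exact sequence \eqref{syzmodel} — I set $\Phi(R)=\mathcal{G}$. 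A morphism of representations $(\psi_1,\psi_2,\psi_3)$ induces the maps $p$ (block-diagonal, built from $\psi_1$ and $\psi_3$) on $\opn(-d_1)^{a_1}\oplus\opn(-d_2)^{a_2}$ and $q$ (built from $\psi_2$) on $\opn^{c}$, and the intertwining relations defining a morphism of quiver representations are precisely $\alpha'\circ p=q\circ\alpha$; hence $p$ carries $\ker\alpha$ into $\ker\alpha'$ and yields $\Phi(\psi)\colon\mathcal{G}\to\mathcal{G}'$. This is visibly functorial and additive, and it is essentially surjective onto syzygy bundles of the form \eqref{syzmodel} because any such $\alpha$ expands uniquely in the chosen bases.

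To prove faithfulness I would use that $d_1>d_2>0$ gives $H^0(P)=0$ for $P=\opn(-d_1)^{a_1}\oplus\opn(-d_2)^{a_2}$, and $H^1(P)=0$ since $n\ge2$; hence $H^0(\mathcal{G})=0$ and the connecting homomorphism of \eqref{syzmodel} is a natural isomorphism $H^0(\opn^{c})\simto H^1(\mathcal{G})$. Now suppose a representation morphism $(\psi_i)$ has $\Phi(\psi)=0$. Applying $H^1$ to the induced morphism of short exact sequences and using naturality of the connecting map identifies $\psi_2$ with the zero map $H^1(\mathcal{G})\to H^1(\mathcal{G}')$, so $\psi_2=0$; then $q=0$, so $\alpha'\circ p=0$, so $p$ factors through the inclusion $\mathcal{G}'\hookrightarrow P'$, say $p=\iota'\circ\tilde p$. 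Since $\Phi(\psi)=\tilde p|_{\mathcal{G}}=0$, the map $\tilde p$ factors through $P/\mathcal{G}\cong\opn^{c}$; but $\mathrm{Hom}(\opn^{c},\mathcal{G}')=H^0(\mathcal{G}')^{c}=0$, so $\tilde p=0$, $p=0$, and finally $\psi_1=\psi_3=0$ since each block of $p$ is the corresponding $\psi_j$ as a scalar matrix (using $\mathrm{Hom}(\opn(-d_j),\opn(-d_j))=\C$). I expect this to be the most delicate part, because the cohomology of $\mathcal{G}$ alone does not recover $V_1$ or $V_3$ (it sees only $\bigcap_k\ker f_k$ and $\bigcap_l\ker g_l$), which is why the argument must route through the factorization $p=\iota'\circ\tilde p$ rather than reconstructing the $\psi_j$ directly.

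For the decomposability criterion I would recognize that $a_1^2+a_2^2+c^2-w_1a_1c-w_2a_2c$ is exactly the value $q(\vv)$ of the Tits (Euler) quadratic form of the quiver at the dimension vector $\vv=(a_1,c,a_2)$ — equivalently $q(\vv)=\dim\mathrm{Hom}(R,R)-\dim\mathrm{Ext}^1(R,R)$ for every representation $R$ of dimension $\vv$. By Kac's theorem the dimension vectors of indecomposable representations are exactly the positive roots, and every root satisfies $q\le1$; hence $q(\vv)>1$ implies there is no indecomposable representation of dimension $\vv$. So, given a syzygy bundle $\mathcal{G}$ as in \eqref{syzmodel} with $a_1^2+a_2^2+c^2-w_1a_1c-w_2a_2c>1$, I choose (by essential surjectivity) a representation $R$ of dimension $\vv$ with $\Phi(R)\cong\mathcal{G}$, which then decomposes $R=R'\oplus R''$ with $R',R''\ne0$. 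Since $\alpha_R=\alpha_{R'}\oplus\alpha_{R''}$ is block-diagonal, both $\alpha_{R'}$ and $\alpha_{R''}$ are surjective, so $\Phi(R')$ and $\Phi(R'')$ are bundles and $\mathcal{G}\cong\Phi(R')\oplus\Phi(R'')$ by additivity. Each summand is nonzero: if $R'\ne0$ then $\rk\Phi(R')=a_1'+a_2'-c'$ cannot be $0$, for otherwise $\alpha_{R'}$ would be an isomorphism $\opn(-d_1)^{a_1'}\oplus\opn(-d_2)^{a_2'}\simto\opn^{c'}$, forcing $d_1a_1'+d_2a_2'=0$ and hence $a_1'=a_2'=c'=0$. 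Therefore $\mathcal{G}$ is decomposable. The only non-formal ingredient in this last step is Kac's bound on the Tits form of roots; essential surjectivity, additivity, and non-vanishing of the summands all follow directly from the construction.
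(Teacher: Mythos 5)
Your proposal is correct and follows essentially the same route as the paper: the same basis-dependent functor on globally surjective representations of the quiver $A_{w_1,w_2}$, essential surjectivity, preservation of direct sums of the kernels, and Kac's bound $q(\vv)\le 1$ on dimension vectors of indecomposable representations. Your cohomological faithfulness argument (via the connecting isomorphism $H^0(\opn^c)\simeq H^1(\mathcal{G})$ and the factorization of $M$ through $\mathcal{G}'$ and then through $\opn^c$) and your verification that neither summand $\ker\alpha_{R'}$, $\ker\alpha_{R''}$ is zero are in fact more detailed than the paper's own treatment, which asserts both points with little justification.
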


Finally, we consider the relation between monads and representations of quivers. Recall that a \emph{monad} on a nonsingular projective variety $X$ is a complex of locally free sheaves of the form
\begin{eqnarray}\label{monadintro}
M^{\bullet}: \xymatrix{\mathcal{A}^{a} \ar[r] & \mathcal{B}^b \ar[r] & \mathcal{C}^c}
\end{eqnarray}
whose only nontrivial cohomology is the middle one, which we assume, in this paper, to also be a locally free sheaf. We prove:

\begin{theorem}\label{thm-monads}
If $\mathcal{A}$, $\mathcal{B}$ and $\mathcal{C}$ are simple vector bundles, then the category of monads of the form (\ref{monadintro}) is equivalent to a full subcategory of the category of representations of the quiver
$$ \xymatrix{
\bullet \ar@<1.8ex>[r]^1 \ar@<-1.8ex>[r]^{\vdots}_m &
\bullet \ar@<-1.8ex>[r]^{\vdots}_n \ar@<1.8ex>[r]^1 & \bullet
} $$
where $m = \dim {\rm Hom}(\mathcal{A},\mathcal{B})$ and $n = \dim {\rm Hom}(\mathcal{B},\mathcal{C})$. In addition, if $a^2 + b^2 + c^2 - m ab - n bc > 1$ then the cohomology sheaf of (\ref{monadintro}) is decomposable.
\end{theorem}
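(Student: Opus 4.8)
The plan is to follow the template already used for Theorems~\ref{Teo1} and~\ref{thmsyzygy}. For the equivalence I would fix bases $\phi_1,\dots,\phi_m$ of ${\rm Hom}(\mathcal{A},\mathcal{B})$ and $\psi_1,\dots,\psi_n$ of ${\rm Hom}(\mathcal{B},\mathcal{C})$. Via the canonical isomorphisms ${\rm Hom}(\mathcal{A}^a,\mathcal{B}^b)\cong{\rm Hom}(\mathcal{A},\mathcal{B})\otimes{\rm Hom}(\mathbb{C}^a,\mathbb{C}^b)$ and ${\rm Hom}(\mathcal{B}^b,\mathcal{C}^c)\cong{\rm Hom}(\mathcal{B},\mathcal{C})\otimes{\rm Hom}(\mathbb{C}^b,\mathbb{C}^c)$, a monad $M^\bullet\colon\mathcal{A}^a\xrightarrow{f}\mathcal{B}^b\xrightarrow{g}\mathcal{C}^c$ is encoded by matrices $F_i\in{\rm Hom}(\mathbb{C}^a,\mathbb{C}^b)$, $G_j\in{\rm Hom}(\mathbb{C}^b,\mathbb{C}^c)$ with $f=\sum_i\phi_i\otimes F_i$ and $g=\sum_j\psi_j\otimes G_j$, i.e.\ by a representation of the displayed quiver with dimension vector $(a,b,c)$. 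Simplicity gives ${\rm Hom}(\mathcal{A}^a,\mathcal{A}^{a'})\cong{\rm Hom}(\mathbb{C}^a,\mathbb{C}^{a'})$, and similarly for $\mathcal{B}$ and $\mathcal{C}$, so a morphism of monads is exactly a triple of linear maps intertwining the $F_i$ and the $G_j$ --- a morphism of representations. This correspondence is manifestly functorial and bijective on Hom-sets, hence $M^\bullet\mapsto(F_i,G_j)$ is a fully faithful functor; its essential image is the full subcategory carved out by the relation imposed by $g\circ f=0$ together with the open conditions that $f$ be a subbundle map and $g$ surjective. This gives the first assertion.

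For the decomposability statement, observe that $a^2+b^2+c^2-mab-nbc$ is the value at $\mathbf{d}=(a,b,c)$ of the Euler (equivalently Tits) form $\langle\mathbf{d},\mathbf{d}\rangle$ of the quiver above. Assuming this is $>1$, Kac's theorem on dimension vectors of indecomposable quiver representations shows $\mathbf{d}$ is not a root, so there is no indecomposable representation of the quiver with this dimension vector --- a fortiori none satisfying the relation $g\circ f=0$. Hence the representation $R$ attached to $M^\bullet$ splits nontrivially, $R\cong R_1\oplus R_2$. Transporting the associated orthogonal idempotents of ${\rm End}(R)\cong{\rm End}(M^\bullet)$ back through the equivalence, and using that a direct summand of $\mathcal{A}^a$ has the form $\mathcal{A}^{a_1}$ by simplicity (and likewise for $\mathcal{B}^b$, $\mathcal{C}^c$), one obtains a splitting of complexes $M^\bullet\cong M_1^\bullet\oplus M_2^\bullet$ with $M_i^\bullet\colon\mathcal{A}^{a_i}\xrightarrow{f_i}\mathcal{B}^{b_i}\xrightarrow{g_i}\mathcal{C}^{c_i}$ and $(a_i,b_i,c_i)=\underline{\dim}\,R_i$. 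Since cohomology commutes with finite direct sums and the outer cohomology sheaves of $M^\bullet$ vanish, each $M_i^\bullet$ is again a monad, its middle cohomology $E_i$ being a direct summand of the locally free sheaf $E=H^0(M^\bullet)$ and hence itself locally free on the nonsingular $X$; thus $E\cong E_1\oplus E_2$. Finally each $E_i\neq0$, since otherwise $M_i^\bullet$ would be an exact sequence $0\to\mathcal{A}^{a_i}\to\mathcal{B}^{b_i}\to\mathcal{C}^{c_i}\to0$, which the hypotheses on $(\mathcal{A},\mathcal{B},\mathcal{C})$ exclude unless $(a_i,b_i,c_i)=0$. Therefore $E$ is decomposable.

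The linear-algebra dictionary of the first part is routine once the earlier theorems are in place, so the real content lies in the second part. Kac's theorem supplies the splitting of $R$ for free; the work is entirely on the geometric side, namely checking that the summands of a decomposed $R$ reassemble into genuine monads with nonzero locally free cohomology. This is precisely where the hypotheses on $\mathcal{A},\mathcal{B},\mathcal{C}$ beyond bare simplicity --- the vanishings guaranteeing, in particular, that a morphism of cohomology sheaves lifts uniquely to one of monads, so that ${\rm End}(E)\cong{\rm End}(M^\bullet)$ --- must be used: without them the cohomology functor need not be faithful and a nonzero summand of $R$ could contribute nothing to $E$. I would also note that the cheaper bound $\dim{\rm End}(E)=\dim{\rm End}(R)\ge\langle\mathbf{d},\mathbf{d}\rangle\ge2$ only shows $E$ is non-simple; obtaining genuine decomposability seems to require the root-theoretic input.
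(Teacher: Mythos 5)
Your argument is essentially the paper's own: simplicity of $\mathcal{A},\mathcal{B},\mathcal{C}$ yields a fully faithful encoding of monads as representations of $K_{m,n}$ (Proposition \ref{equivmonads}), Kac's theorem (Proposition \ref{Kac}) splits the representation when the Tits form exceeds $1$, the splitting is transported back to a direct-sum decomposition of the monad (Lemma \ref{subobrep}, Proposition \ref{decomp-monad}), and cohomology commutes with finite direct sums. The only divergence is that you explicitly address the possibility of a summand $M_i^{\bullet}$ with vanishing cohomology --- a point the paper silently skips --- though your claim that the hypotheses exclude an exact complex $0\to\mathcal{A}^{a_i}\to\mathcal{B}^{b_i}\to\mathcal{C}^{c_i}\to 0$ is not actually supported by the bare simplicity assumption of the theorem, so that step remains as unjustified in your write-up as it is implicit in the paper's.
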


This generalizes the results of \cite{JS} (in particular, \cite[Thm 1.1]{JS}) concerning linear monads on $\pn$, i.e. when $\mathcal{A}=\opn(-1)$, $\mathcal{B}=\opn$ and $\mathcal{C}=\opn(1)$.

Furthermore, if $\mathcal{A}$, $\mathcal{B}$ and $\mathcal{C}$ are elements of distinct blocks of an $n$-block collection generating the bounded derived category $D^b(X)$ of coherent sheaves of $\mathcal{O}_X$-modules, then we also prove that the cohomology sheaf $\mathcal{E}$ of (\ref{monadintro}) is decomposable, if and only if the corresponding quiver representation is decomposable, cf. Theorem \ref{deccohomology}.

\bigskip

\noindent{\bf Notation.}
Throughout this paper, $\kappa$ denotes an algebraically closed field with characteristic zero, and $X$ is always a nonsingular projective variety over $\kappa$ of dimension $n$.   

\bigskip

\noindent{\bf Acknowledgements.} The first named author is partially supported by the CNPq grant number 400356/2015-5 and the FAPESP grant number 2014/14743-8. The second named author was supported by the FAPESP doctoral grant number 2007/07469-3 and the FAPESP post-doctoral grant number 2011/21398-7. Some of the results presented here were obtained in her PhD thesis. We thank Helena Soares for her help with the results in Section \ref{sec5}, and Rosa Maria Mir\'o-Roig for describing to us the monads considered in Section \ref{anexample}.


\section{Preliminary definitions and results}

In this section we revise some key definitions and results on the theory of representations of quivers and on the derived category of coherent sheaves that will be relevant in the following sections.

\subsection{Representations of quivers}\label{quivers}

We begin by revising some basic facts about representations of quivers. Recall that a {\it quiver} $Q$ consists on a pair $(Q_0, Q_1)$ of sets where $Q_0$ is the set of vertices and $Q_1$ is the set of arrows and a pair of maps $t, h: Q_1 \rightarrow Q_0$ the tail and head maps. An example is the \emph{Kronecker quiver}, denoted $K_{w}$, which consists of $2$ vertices and $w$ arrows.

\begin{equation} \label{kronquiver}
\xymatrix{
\bullet \ar@<1.8ex>[r]^1 \ar@<-1.8ex>[r]^{\vdots}_w & \bullet
}
\end{equation}

A {\it representation} $R = (\{V_i\}, \{A_a\})$ of $Q$ consists of a collection of finite dimensional $\kappa$-vector spaces $\{V_i; i \in Q_0 \}$ together with a collection of linear maps $ \{A_a : V_{t(a)} \rightarrow V_{h(a)}; a \in Q_1\}$.  A {\it morphism} $f$ between two representations $R_{1} = (\{V_i\}, \{A_a\})$ and $ R_{2} = (\{W_i\}, \{B_a\})$ is a collection of linear maps $\{f_i\}$ such that for each $a \in Q_1$ the diagram bellow is commutative
$$\xymatrix{V_{t(a)} \ar[r]^{A_a} \ar[d]_{f_{t(a)}} & V_{h(a)} \ar[d]^{f_{h(a)}}\\
W_{t(a)} \ar[r]_{B_a} & W_{h(a)}}$$ 
With these definitions, representations of $Q$ form an abelian category hereby denoted by $\mathfrak{R}(Q)$. 

Given a representation $R \in \mathfrak{R}(Q)$, we associate a vector $\vv \in \Z^{Q_0}$ called {\it{dimension vector}}, whose entries are $\vv_i = \dim V_i$. 

The \emph{Euler form} on $\mathbb{Z}^{Q_0}$ is a bilinear form associated to $Q$, given by
$$< \vv, \ww>  = \sum_{i \in Q_0} \vv_i \ww_i  - \sum_{a \in Q_1} \vv_{t(a)} \ww_{h(a)}.$$ 
The \emph{Tits form} is the corresponding quadratic form, given by
$$q(\vv) = <\vv, \vv>.$$
For instance, the Tits form of the Kronecker quiver with $w$ arrows is given by
\begin{eqnarray}\label{titsKronecker}
{q_{w}(\vv) = a^2 + b^2 - wab}, ~~ \vv = (a,b) \in \Z^2.
\end{eqnarray} 

\begin{definition}
A vector $\vv \in \Z^{Q_0}$ is a {\rm{root}} if there is an indecomposable representation $R$ of $Q$ with dimension vector $\vv$. Moreover, $\vv$ is a {\rm Schur root}
if there is a representation $R$ of $Q$ with dimension vector $\vv$ satisfying ${\rm Hom}(R,R)=\kappa$.
\end{definition}

Clearly, every Schur root is a root; note also that the condition ${\rm Hom}(R,R)=\kappa$ is an open condition in the affine space 
$$ \oplus_{a \in Q_1} {\rm Hom} (\kappa^{\vv_{t(a)}}, \kappa^{\vv_{h(a)}}) $$
of all representations with fixed dimension vector $\vv$. Thus if $\vv$ is a Schur root, then
${\rm Hom}(R,R)=\kappa$ for a generic representation with dimension vector $\vv$. In particular, if $\vv$ is a Schur root, then generic representation with dimension vector $\vv$ is indecomposable. A reference for generic representations and Schur roots is \cite{S}. For more information about roots and root systems, we refer to $\cite{K}$.

The following two facts will be very relevant in what follows.  The first one follows from Kac's theory of infinite root systems \cite{K}. 

\begin{proposition}\label{Kac}
Let  $Q$ be a quiver with Tits form $q$. If $\vv$ is a dimension vector satisfying $q(\vv) > 1$, then every representation with dimension vector $\vv$ is decomposable.
\end{proposition}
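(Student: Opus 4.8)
The plan is to read this off Kac's theorem on the dimension vectors of indecomposable representations of a quiver, which is the deep input that the statement invokes. Recall (see \cite{K}) that, over the algebraically closed field $\kappa$, a vector $\vv\in\Z^{Q_0}$ with nonnegative entries occurs as the dimension vector of some indecomposable representation of $Q$ if and only if it is a positive root of the root system attached to the underlying graph of $Q$; moreover every root $\vv$ satisfies $q(\vv)\le 1$, with $q(\vv)=1$ when $\vv$ is a real root and $q(\vv)\le 0$ when $\vv$ is an imaginary root. Granting this, the argument is purely formal. Suppose $q(\vv)>1$. Then $\vv$ is not a root, so no indecomposable representation of $Q$ has dimension vector $\vv$. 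Since $q(0)=0$ we have $\vv\ne 0$, and by the Krull--Schmidt theorem (valid for finite-dimensional representations of a quiver, the endomorphism ring of an indecomposable being local) any representation $R$ with dimension vector $\vv$ decomposes as a finite direct sum $R\cong R_1\oplus\cdots\oplus R_k$ of indecomposable subrepresentations, with $k\ge 1$ because $\vv\ne 0$. If $k=1$ then $R$ itself would be an indecomposable of dimension vector $\vv$, which is impossible; hence $k\ge 2$ and $R$ is decomposable.

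For completeness I would also recall why every root has Tits form at most $1$, which is the only property of $q$ actually used. The form $q$ is invariant under the Weyl group $W$, since $W$ preserves the symmetrized bilinear form $\langle\vv,\ww\rangle+\langle\ww,\vv\rangle$. Real roots have $q=1$: they are the $W$-translates of the non-looped simple roots $\alpha_i$, for which $q(\alpha_i)=1$. Imaginary roots are the $W$-translates of $\pm F$, where $F$ is the set of nonzero dimension vectors with connected support satisfying $\langle\vv,\alpha_i\rangle+\langle\alpha_i,\vv\rangle\le 0$ for all $i$; multiplying the $i$-th inequality by $\vv_i\ge 0$ and summing over $i\in Q_0$ gives $2q(\vv)\le 0$, so $q\le 0$ on $F$ and hence on every imaginary root. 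Thus all roots lie in $\{\,\vv : q(\vv)\le 1\,\}$, which is exactly the implication used above.

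I do not expect a genuine obstacle here, since the one hard ingredient, Kac's theorem, is quoted rather than proved; the rest is bookkeeping. The two points deserving a little care are the exclusion of the zero dimension vector (immediate, as $q(0)=0\le 1$) and the explicit appeal to Krull--Schmidt to turn ``$\vv$ is not a root'' into ``every representation of dimension vector $\vv$ is a nontrivial direct sum''. One can motivate the bound by an orbit-dimension count: the group $\prod_{i\in Q_0}\mathrm{GL}(\vv_i)$ acts on the affine space of representations of dimension vector $\vv$, which has dimension $\sum_{a\in Q_1}\vv_{t(a)}\vv_{h(a)}$, and the orbit of a representation $R$ has dimension $\sum_{i\in Q_0}\vv_i^2-\dim\mathrm{End}(R)$; containment of the orbit in the representation space yields $q(\vv)\le\dim\mathrm{End}(R)$, so for a representation with one-dimensional endomorphism ring this already forces $q(\vv)\le 1$, and the content of Kac's theorem is precisely the control of the remaining indecomposable representations.
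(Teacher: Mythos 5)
Your argument is correct and is exactly what the paper intends: it states this proposition without proof, attributing it to Kac's theory of infinite root systems \cite{K}, and your deduction (dimension vectors of indecomposables are positive roots, all roots satisfy $q(\vv)\le 1$, then Krull--Schmidt) is the standard way to extract the statement from that theory.
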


The second fact follows from \cite[Prop 1.6]{K} and \cite[Thm 4.1]{Schof}.

\begin{proposition}\label{schur} 
Let $Q$ be the Kronecker quiver with $w \geq 3$, and let $\vv\in\mathbb{Z}^{2}$ be a dimension vector. If $q_{w}(\vv) \leq 1 $, then $\vv$ is a Schur root.
\end{proposition}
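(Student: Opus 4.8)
The plan is to split into cases according to the sign of $q_w(\vv)$, invoking Kac's classification of roots of a quiver and Schofield's description of Schur roots. Before that, two reductions: a dimension vector has nonnegative entries, so write $\vv=(a,b)$ with $a,b\ge 0$; and if $a=0$ or $b=0$, then $q_w(\vv)\le 1$ forces $\vv\in\{(0,0),(1,0),(0,1)\}$. The vectors $(1,0)$ and $(0,1)$ are the simple roots, realized by the simple representations $S_1,S_2$, for which ${\rm Hom}(S_i,S_i)=\kappa$; hence they are Schur roots. So from now on $a,b\ge 1$, and in particular the support of $\vv$ is connected, so by Kac's root theorem (\cite{K}) $\vv$ is a positive root of $K_w$: a real root if $q_w(\vv)=1$ and an imaginary root if $q_w(\vv)\le 0$. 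Note moreover that for $w\ge 3$ the equation $q_w(a,b)=0$ has no solution with $a,b\ge 1$ (it would force $w^2-4$ to be a perfect square), so in the imaginary case we actually have $q_w(\vv)<0$.

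In the real case $q_w(\vv)=1$, the vector $\vv$ is obtained from $e_1$ or $e_2$ by a sequence of simple reflections staying among positive vectors; applying the corresponding BGP reflection functors to $S_1$ or $S_2$ yields an indecomposable representation $M$ with dimension vector $\vv$. These functors are equivalences of the relevant full subcategories, and ${\rm Ext}^1(S_i,S_i)=0$ since $K_w$ has no loops, so $M$ is rigid, i.e. ${\rm Ext}^1(M,M)=0$. As $\kappa K_w$ is hereditary, the Euler form equals the homological Euler characteristic, so $\dim{\rm Hom}(M,M)=q_w(\vv)+\dim{\rm Ext}^1(M,M)=1$; thus $\vv$ is a Schur root.

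In the imaginary case $q_w(\vv)<0$, by Kac's description of the imaginary roots (\cite[Prop 1.6]{K}) the vector $\vv$ is Weyl-conjugate, through positive vectors, to a vector in the fundamental domain of $K_w$ (the $\vv\ge 0$ with connected support and nonpositive symmetrized Euler pairing against each $e_i$); since being a Schur root is preserved by the BGP reflection functors at the sink and source of $K_w$ — equivalences preserving the ${\rm Hom}$-spaces between representations without the corresponding simple summand, and the generic representation of a dimension vector $\ne e_i$ with $s_i$ applied still positive has no such summand — we may assume $\vv$ itself lies in the fundamental domain. For $w\ge 3$ there are no isotropic roots, so $\vv$ is not a proper multiple of one, and Schofield's criterion for Schur roots (\cite[Thm 4.1]{Schof}) then gives that $\vv$ is a Schur root. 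The real obstacle is precisely this last step: that every vector of the fundamental domain of $K_w$ is a Schur root when $w\ge 3$ is exactly where the deep input of \cite{K} and \cite{Schof} is used; the reductions to the fundamental domain and to the real-root case, and the real-root case itself, are routine once one has the reflection functors and Kac's classification of roots.
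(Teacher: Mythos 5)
Your argument is correct and rests on exactly the same external inputs as the paper, which offers no proof beyond citing \cite[Prop 1.6]{K} and \cite[Thm 4.1]{Schof}; your reductions (the descent to a simple root in the real case and to the fundamental domain in the imaginary case, both implemented by BGP reflection functors, which apply here since every vertex of $K_w$ is a sink or a source) are the standard way of filling in that citation. The only loose end is the degenerate vector $\vv=(0,0)$, which you list among the possible cases but do not dispose of: it satisfies $q_{w}(\vv)\le 1$ without being a root at all, so it should simply be excluded from the statement.
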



\subsection{Derived categories}

In \cite{SM}, Mir\'o-Roig and Soares gave a cohomological characterisation of Steiner bundles and later Marques and Soares \cite{SMM}, gave a cohomological characterisation of a class of bundles given as cohomology of monads. Both results will be relevant for us, so we review them here.

Let $D^b(X)$ be the bounded derived category of the abelian category of coherent sheaves of $\mathcal{O}_X$-modules. An \emph{exceptional collection} is an ordered collection
$(\mathcal{F}_0, \cdots, \mathcal{F}_m)$ of objects of $D^b(X)$ such that
$$ {\rm Hom}^0_{D^b(X)}(\mathcal{F}_i, \mathcal{F}_i) \simeq \kappa, \;\;
{\rm Ext}^p(\mathcal{F}_i,\mathcal{F}_i) = 0, \;\; \mbox{for all}\;\;  p\geq1, $$
$$ {\rm Ext}^{p}(\mathcal{F}_i,\mathcal{F}_j) = 0\; \; \mbox{for all}\; i>j,\; \mbox{and}\; p\geq0. $$
In addition, if
$${\rm Ext}^p(\mathcal{F}_i,\mathcal{F}_j)=0 \;\; {\rm for}\;\; i \leq j \;\; {\rm and}\;\; p\neq0\,,$$ 
then $(\mathcal{F}_0, \dots, \mathcal{F}_m)$ is called a \emph{strongly exceptional collection}. It is a \emph{full (strongly) exceptional collection} if it generates $D^b(X)$.

An exceptional collection $(\mathcal{F}_0, \cdots, \mathcal{F}_m)$ is called  a \emph{block} if 
$$ {\rm Ext}^p(\mathcal{F}_j ,\mathcal{F}_i) = 0
\;\; \forall \;\; p\geq 0\;\; \mbox{and}\;\; i \neq j. $$
An \emph{$m$-block collection} of type $(t_0,\dots,t_m)$ is an exceptional collection $\boldsymbol{B}=(\boldsymbol{\mathcal{F}}_0,\dots, \boldsymbol{\mathcal{F}}_m)$ where each $\boldsymbol{\mathcal{F}}_i = (\mathcal{F}^i_1, \dots, \mathcal{F}^i_{t_i})$ is a block.

\begin{definition}
Let $\boldsymbol{B}=(\boldsymbol{\mathcal{F}}_0, \dots, \boldsymbol{\mathcal{F}}_m)$
be an $m$-block collection of type \linebreak $(t_0, \dots, t_m)$. The {\rm left dual $m$-block collection} of {\bf B} is the $m$-block collection ${}^{\vee}\bf{B}$ of type $(u_0, \dots, u_m) $ with $u_i = u_{m-i}$
$$ {}^{\vee}{\bf B} = (\boldsymbol{\mathcal H}_0, \dots, \boldsymbol{\mathcal H}_m)  =
({\mathcal H}^0_1, \dots, {\mathcal H}^0_{u_0}, \dots, {\mathcal H}^m_1, \dots,{\mathcal H}^m_{u_m}) $$ 
where
$$ {\rm Hom}^k_{D^b(X)}({\mathcal H}^i_j,{\mathcal F}^l_p) = 0 $$
for all indices, with the only exception
$$ {\rm Ext}^i(\mathcal{H}^i_j,\F^{m-i}_j) \simeq \kappa .$$
These conditions uniquely determine ${}^{\vee}{\bf B}$.
\end{definition}

We are now able to define Steiner bundles in the sense of \cite{SM} and state their cohomological characterisation.

\begin{definition}\label{defsteiner}
A vector bundle $\mathcal{S}$ on $X$ is a {\rm Steiner bundle} of type $(\F_0, \F_1)$ if it is given by a short exact sequence of the form
$$\xymatrix{0 \ar[r] & \F_0^a \ar[r]^{\alpha} &  \F_1^b \ar[r] & \mathcal{S} \ar[r] & 0 }$$
such that $a,b \geq 1$ and $(\F_0,\F_1)$ is an ordered pair of vector bundles on $X$ satisfying
\begin{itemize}
\item[$(i)$] $(\F_0,\F_1)$ is strongly exceptional;
\item[$(ii)$] $\F_0^\lor \otimes \F_1$ is globally generated.
\end{itemize} 
\end{definition}

The cohomological characterisation is the following, cf. \cite[Thm 2.4]{SM}.

\begin{theorem}\label{carsteiner}
Let $X$ be a smooth projective variety of dimension $n$ with an $n$-block collection 
${\bf B}= (\boldsymbol{\mathcal{F}}_0, \dots, \boldsymbol{\mathcal{F}}_n)$,
$\boldsymbol{\mathcal{F}}_i = (\F^i_1, \dots, \F^i_{t_i})$ of locally free sheaves which generate $D^b(X)$, and let ${}^{\vee}{\bf B}$ be its left dual basis.  Let $\F^i_{i_0} \in \boldsymbol{\mathcal{F}}_i$ and
$\F^j_{j_0} \in \boldsymbol{\mathcal{F}}_j$, where $0 \leq i < j \leq n$ and $ 1 \leq i_0 \leq a_i$, 
$1 \leq j_0 \leq a_j$, and let $\mathcal{S}$ be a locally free sheaf on $X$. Then $\mathcal{S}$ is a Steiner bundle of type $(\F^i_{i_0},\F^j_{j_0})$ given by the short exact sequence
$$ \xymatrix{0 \ar[r] & (\F^i_{i_0})^a \ar[r] & (\F^{j}_{j_0})^b \ar[r] & \mathcal{S} \ar[r] & 0}$$
if and only if $(\F^i_{i_0})^\lor \otimes \F^j_{j_0}$ is globally generated and all
${\rm Ext}^l(\mathcal{H}^m_p,\mathcal{S})$ vanish, with the only exceptions of 
\begin{equation}
\dim {\rm Ext}^{n-i-1}(\mathcal{H}^{n-i}_{i_0}, \mathcal{S}) = a ~~ {\rm and}
~~\dim {\rm Ext}^{n-j}(\mathcal{H}^{n-j}_{j_0}, \mathcal{S}) = b.
\end{equation} 
\end{theorem}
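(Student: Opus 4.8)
The plan is to prove both implications with the generalized Beilinson-type spectral sequence attached to the full $n$-block collection $\mathbf{B}$ and its left dual ${}^{\vee}\mathbf{B}$, using throughout the defining orthogonality of ${}^{\vee}\mathbf{B}$, namely that ${\rm Ext}^k(\mathcal{H}^m_p,\mathcal{F}^l_q)=0$ for all indices except ${\rm Ext}^{m}(\mathcal{H}^m_p,\mathcal{F}^{n-m}_p)\simeq\kappa$. This single fact is what makes the relevant cohomology computations collapse.

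For the implication ($\Rightarrow$), suppose $\mathcal{S}$ is a Steiner bundle of type $(\mathcal{F}^i_{i_0},\mathcal{F}^j_{j_0})$. Global generation of $(\mathcal{F}^i_{i_0})^{\lor}\otimes\mathcal{F}^j_{j_0}$ is already part of Definition \ref{defsteiner}, so only the $\mathrm{Ext}$-pattern must be verified. First I would apply ${\rm Hom}(\mathcal{H}^m_p,-)$ to the defining sequence $0\to(\mathcal{F}^i_{i_0})^a\to(\mathcal{F}^j_{j_0})^b\to\mathcal{S}\to0$ and read off the long exact sequence. By the orthogonality of ${}^{\vee}\mathbf{B}$, the group ${\rm Ext}^k(\mathcal{H}^m_p,\mathcal{F}^i_{i_0})$ is nonzero (and then $\simeq\kappa$) only for $(m,p,k)=(n-i,i_0,n-i)$, and ${\rm Ext}^k(\mathcal{H}^m_p,\mathcal{F}^j_{j_0})$ only for $(m,p,k)=(n-j,j_0,n-j)$; since $i\neq j$ these two exceptional slots are disjoint. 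Hence ${\rm Ext}^l(\mathcal{H}^m_p,\mathcal{S})=0$ whenever $(m,p)\notin\{(n-i,i_0),(n-j,j_0)\}$; for $(m,p)=(n-j,j_0)$ the $\mathcal{F}^i$-terms drop out and ${\rm Ext}^l(\mathcal{H}^{n-j}_{j_0},\mathcal{S})\simeq{\rm Ext}^l(\mathcal{H}^{n-j}_{j_0},(\mathcal{F}^j_{j_0})^b)=\kappa^b\,\delta_{l,\,n-j}$; and for $(m,p)=(n-i,i_0)$ the $\mathcal{F}^j$-terms drop out and the connecting map yields ${\rm Ext}^l(\mathcal{H}^{n-i}_{i_0},\mathcal{S})\simeq{\rm Ext}^{l+1}(\mathcal{H}^{n-i}_{i_0},(\mathcal{F}^i_{i_0})^a)=\kappa^a\,\delta_{l,\,n-i-1}$. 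This is exactly the asserted pattern.

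For the implication ($\Leftarrow$), I would feed the locally free sheaf $\mathcal{S}$ into the generalized Beilinson-type spectral sequence associated to $\mathbf{B}$ and ${}^{\vee}\mathbf{B}$, which exists because $\mathbf{B}$ generates $D^b(X)$:
$$E_1^{p,q}=\bigoplus_{r=1}^{t_{n+p}}{\rm Ext}^q(\mathcal{H}^{-p}_r,\mathcal{S})\otimes\mathcal{F}^{n+p}_r\;\Longrightarrow\;\mathcal{S},\qquad -n\le p\le0,$$
converging to $\mathcal{S}$ in total degree $0$ and to $0$ in every other total degree. By the vanishing hypotheses the only nonzero entries left on the $E_1$-page are $(\mathcal{F}^i_{i_0})^a$ in bidegree $(i-n,\,n-i-1)$, of total degree $-1$, and $(\mathcal{F}^j_{j_0})^b$ in bidegree $(j-n,\,n-j)$, of total degree $0$. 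Since $i<j$, the only differential that can be nonzero is $d_{j-i}\colon(\mathcal{F}^i_{i_0})^a\to(\mathcal{F}^j_{j_0})^b$, so $E_{\infty}=E_{j-i+1}$ with $E_{\infty}^{i-n,n-i-1}=\ker d_{j-i}$ and $E_{\infty}^{j-n,n-j}={\rm coker}\,d_{j-i}$. Convergence then forces $\ker d_{j-i}=0$ (total degree $-1$) and $\mathcal{S}\simeq{\rm coker}\,d_{j-i}$ (the unique surviving term, total degree $0$). Writing $\alpha:=d_{j-i}$, this gives the short exact sequence $0\to(\mathcal{F}^i_{i_0})^a\xrightarrow{\alpha}(\mathcal{F}^j_{j_0})^b\to\mathcal{S}\to0$; since $\mathcal{S}$ is locally free by hypothesis, $(\mathcal{F}^i_{i_0})^{\lor}\otimes\mathcal{F}^j_{j_0}$ is globally generated by hypothesis, and the ordered sub-pair $(\mathcal{F}^i_{i_0},\mathcal{F}^j_{j_0})$ of $\mathbf{B}$ is strongly exceptional, this exhibits $\mathcal{S}$ as a Steiner bundle of type $(\mathcal{F}^i_{i_0},\mathcal{F}^j_{j_0})$.

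The hard part will be the precise set-up and bookkeeping of the Beilinson-type spectral sequence in the block-collection generality and for the left-dual convention used here: pinning down the indices in $E_1^{p,q}=\bigoplus_r{\rm Ext}^q(\mathcal{H}^{-p}_r,\mathcal{S})\otimes\mathcal{F}^{n+p}_r$, checking that the two surviving entries land in consecutive total degrees $-1$ and $0$ (so that exactly one differential, $d_{j-i}$, connects them), and concluding from convergence that $\mathcal{S}$ is literally the cokernel sheaf of $\alpha$ — not a shift of it, nor a nontrivial extension of several graded pieces. The remaining inputs (existence of the spectral sequence, and strong exceptionality of the ordered sub-pair) are standard consequences of the theory of full block exceptional collections and of the standing hypotheses on $\mathbf{B}$.
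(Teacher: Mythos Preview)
The paper does not prove Theorem~\ref{carsteiner}; it is quoted from \cite[Thm~2.4]{SM} (Mir\'o-Roig--Soares) and used as a black box. So there is no in-paper proof to compare against, only the original reference.

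Your argument is correct and is essentially the proof given in \cite{SM}. The forward direction is exactly right: the orthogonality relations ${\rm Ext}^k(\mathcal{H}^m_p,\mathcal{F}^l_q)=\kappa\cdot\delta_{k,m}\delta_{l,n-m}\delta_{p,q}$ make the long exact sequence for ${\rm Hom}(\mathcal{H}^m_p,-)$ collapse precisely as you describe. For the converse, your Beilinson spectral sequence is the spectral-sequence incarnation of what the present paper records as Lemma~\ref{beilinsonlemma} (the Beilinson complex $G^\bullet$ with $G^r=\bigoplus_{p,q}{\rm Ext}^{n-q+r}(\mathcal{H}^{n-q}_p,\mathcal{E})\otimes\mathcal{F}^q_p$); the two formulations are equivalent, and the index chase you outline---two surviving terms in total degrees $-1$ and $0$, connected by a single differential $d_{j-i}$---is exactly how the argument in \cite{SM} runs.

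One small caveat worth tightening: you assert at the end that the sub-pair $(\mathcal{F}^i_{i_0},\mathcal{F}^j_{j_0})$ of $\mathbf{B}$ is automatically \emph{strongly} exceptional. The definition of an $n$-block collection in this paper only requires exceptionality, not strong exceptionality, so this is not literally immediate from the stated hypotheses. In \cite{SM} this is handled either by an explicit hypothesis on $\mathbf{B}$ or by invoking that block collections generating $D^b(X)$ of the type considered there are strongly exceptional; you should make whichever route you intend explicit rather than calling it a ``standard consequence''.
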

 
Now we turn our attention to the cohomological characterisation for the bundles obtained as cohomology of monads, due to Marques and Soares in \cite{SMM}. 

\begin{definition}\label{defmon}
A {\rm monad} $M^{\bullet}$ on a smooth projective variety $X$ is a complex of locally free coherent sheaves on $X$
$$ M^{\bullet}: \xymatrix{\mathcal{A} \ar[r]^{\alpha} & \mathcal{B} \ar[r]^{\beta} & \mathcal{C} } $$
such that $\alpha$ is injective, $\beta$ is surjective; the coherent sheaf
$\mathcal{E} = \ker{\beta} / {\rm im}\,\alpha$ is called the {\rm cohomology} of $M^{\bullet}$.
\end{definition} 

The following two definitions are important for the main result we would like to present.

\begin{definition}
Let ${\bf B}= (\boldsymbol{\mathcal{F}}_0, \cdots, \boldsymbol{\mathcal{F}}_m)$,
$\boldsymbol{\mathcal{F}}_i = (\F^i_1, \cdots, \F^i_{t_i})$, be an $m$-block collection. 
A coherent sheaf $\mathcal{E}$ on $X$ has \emph{natural cohomology with respect to} ${\bf B}$ if for each $0 \leq p \leq m$ and $1 \leq j \leq t_p$ there is at most one $q \geq 0$ such that
${\rm Ext}^q(\F^p_j,\mathcal{E}) \neq 0$.
\end{definition}

\begin{definition}\label{Beilinson}
Let $X$ be a smooth projective variety with an $m$-block collection ${\bf B}=(\boldsymbol{\mathcal{F}}_0, \cdots, \boldsymbol{\mathcal{F}}_m)$, $\boldsymbol{\mathcal{F}}_i = (\mathcal{F}^i_1, \cdots, \mathcal{F}^i_{t_i})$ of coherent sheaves on $X$. A {\rm Beilinson monad} for $\mathcal{E}$ is a bounded complex $G^{\bullet}$ in $D^b(X)$ whose terms are finite direct sums of elements of ${\bf B}$ and whose cohomology is $\mathcal{E}$, that is,
$$\bigoplus_{i \in \Z} H^i(G^{\bullet}) = H^0(G^{\bullet}) = \mathcal{E}.$$
\end{definition}

The next result tell us when a coherent sheaf $\mathcal{E}$ on $X$ is isomorphic to a Beilinson monad $G^{\bullet}$, see \cite[Cor 1.7]{SMM}.

\begin{lemma}\label{beilinsonlemma}
Let $X$ be a smooth projective variety of dimension $n$ with an $n$-block collection ${\bf B} = (\boldsymbol{\mathcal{F}}_0, \cdots, \boldsymbol{\mathcal{F}}_n)$ generating $D^b(X)$. Let ${}^\vee {\bf B} = (\boldsymbol{\mathcal{H}}_0, \cdots, \boldsymbol{\mathcal{H}}_n)$ with $\boldsymbol{\mathcal{H}}_i = (\mathcal{H}^i_1, \cdots, \mathcal{H}^i_{u_i})$, be its left dual $n$-block collection. Then each coherent sheaf $\mathcal{E}$ on $X$ is isomorphic to a Beilinson monad $G^{\bullet}$ with each $G^r$ given by 
$$ G^r = \bigoplus_{p,q}{\rm Ext}^{n-q+r}(\mathcal{H}_p^{n-q}, \mathcal{E}) \otimes \F^q_p. $$
\end{lemma}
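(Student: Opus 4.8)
\medskip

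\noindent The plan is to deduce the statement from a Beilinson-type resolution of the structure sheaf $\mathcal{O}_\Delta$ of the diagonal $\Delta\subset X\times X$ attached to the pair $(\mathbf{B},{}^{\vee}\mathbf{B})$, combined with the Fourier--Mukai identity
$$\mathcal{E}\;\simeq\;\mathbf{R}\pi_{2*}\bigl(\pi_1^{*}\mathcal{E}\otimes^{\mathbf{L}}_{\mathcal{O}_{X\times X}}\mathcal{O}_\Delta\bigr),$$
where $\pi_1,\pi_2\colon X\times X\to X$ are the two projections; since $X$ is smooth, $\mathcal{E}$ admits a finite locally free resolution, so this derived tensor product is represented by honest sheaves.

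The first and main step is to produce a resolution of $\mathcal{O}_\Delta$ in $D^b(X\times X)$ whose $q$-th term, for $q=0,\dots,n$ and placed in cohomological degree $q-n$, is the single block
$$C^{q-n}\;=\;\bigoplus_{p}\bigl(\mathcal{H}^{n-q}_{p}\bigr)^{\vee}\boxtimes\mathcal{F}^{q}_{p}.$$
For an arbitrary full exceptional collection one obtains only a ``staircase'' resolution; it is precisely the block hypothesis — the vanishing of all $\mathrm{Ext}^{\bullet}$ between distinct members of the same block, together with the orthogonality relations defining ${}^{\vee}\mathbf{B}$ — that forces this resolution to split into these pure blocks with the indicated placement. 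This is the Beilinson-type resolution of the diagonal for an $n$-block collection generating $D^b(X)$, and it is where the hypotheses of the lemma are used in an essential way.

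Granting this, I would pull the resolution back by $\pi_1$, tensor with the locally free resolution of $\mathcal{E}$, and apply $\mathbf{R}\pi_{2*}$ term by term. By flat base change and the projection formula,
$$\mathbf{R}\pi_{2*}\bigl(\pi_1^{*}\mathcal{E}\otimes^{\mathbf{L}}((\mathcal{H}^{n-q}_{p})^{\vee}\boxtimes\mathcal{F}^{q}_{p})\bigr)\;\simeq\;\mathbf{R}\mathrm{Hom}_X(\mathcal{H}^{n-q}_{p},\mathcal{E})\otimes_{\kappa}\mathcal{F}^{q}_{p},$$
a complex of sheaves whose degree-$j$ cohomology sheaf is $\mathrm{Ext}^{j}(\mathcal{H}^{n-q}_{p},\mathcal{E})\otimes\mathcal{F}^{q}_{p}$. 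As $\mathbf{R}\pi_{2*}$ is a triangulated functor it commutes with totalization of the bounded diagonal resolution, so $\mathcal{E}$ is the totalization of the bicomplex whose $q$-th column is $\bigoplus_{p}\mathbf{R}\mathrm{Hom}_X(\mathcal{H}^{n-q}_{p},\mathcal{E})\otimes\mathcal{F}^{q}_{p}$, placed in degree $q-n$. Now I would invoke that $\kappa$ is a field: each column is a complex of $\kappa$-vector spaces tensored by a fixed sheaf, hence formal; transferring the horizontal differential onto the column cohomologies via the homological perturbation lemma replaces this bicomplex by a quasi-isomorphic honest complex $G^{\bullet}$. A summand $\mathrm{Ext}^{j}(\mathcal{H}^{n-q}_{p},\mathcal{E})\otimes\mathcal{F}^{q}_{p}$ then sits in total degree $j+(q-n)$; writing $r=j+q-n$ gives
$$G^{r}\;=\;\bigoplus_{p,q}\mathrm{Ext}^{n-q+r}(\mathcal{H}^{n-q}_{p},\mathcal{E})\otimes\mathcal{F}^{q}_{p},$$
as required. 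Finally, $G^{\bullet}$ is by construction quasi-isomorphic to $\mathbf{R}\pi_{2*}(\pi_1^{*}\mathcal{E}\otimes^{\mathbf{L}}\mathcal{O}_\Delta)\simeq\mathcal{E}$, which is concentrated in degree $0$; since every $\mathcal{F}^{q}_{p}$ lies in $\mathbf{B}$, each $G^{r}$ is a finite direct sum of elements of $\mathbf{B}$, so $\bigoplus_i H^i(G^{\bullet})=H^0(G^{\bullet})=\mathcal{E}$ and $G^{\bullet}$ is a Beilinson monad for $\mathcal{E}$.

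I expect the main obstacle to be the second step: showing that the resolution of $\mathcal{O}_\Delta$ genuinely has the pure-block form above, with that precise cohomological placement. This is the technical heart, and is exactly where ``$n$-block collection generating $D^b(X)$'' and ``left dual'' are used in full strength. A secondary delicate point is the homological-perturbation step: one must check that transferring the differential to the column cohomology preserves the totalization, so that $G^{\bullet}$ with its corrected differential is quasi-isomorphic to $\mathcal{E}$ itself, not merely to the $E_2$-page of the naive column spectral sequence. Everything else — the Fourier--Mukai identity, the projection-formula computation of the terms, and the index arithmetic — is routine. An alternative, diagonal-free route replaces the resolution of $\mathcal{O}_\Delta$ by the Postnikov tower that a full exceptional collection and its left dual furnish for each $\mathcal{E}$, with subquotients $\mathbf{R}\mathrm{Hom}(\mathcal{H}^{n-q}_{p},\mathcal{E})\otimes\mathcal{F}^{q}_{p}$; grouping the tower by blocks and taking its convolution yields the same $G^{\bullet}$.
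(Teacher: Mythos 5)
The paper does not prove this lemma: it is quoted verbatim from Marques--Soares \cite{SMM} (their Corollary 1.7), which in turn rests on the Beilinson-type spectral sequence of Costa and Mir\'o-Roig for $n$-block collections. So there is no in-paper argument to compare yours against, and your proposal has to stand on its own.

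Your overall strategy is the standard one, and the downstream steps are sound: the Fourier--Mukai identity, the projection-formula computation giving $\mathbf{R}\mathrm{Hom}_X(\mathcal{H}^{n-q}_{p},\mathcal{E})\otimes_\kappa\mathcal{F}^{q}_{p}$ for each column, the use of formality of complexes of $\kappa$-vector spaces plus homological perturbation to replace the totalized bicomplex by an honest complex with the stated terms, and the index arithmetic $r=j+q-n$ all check out (on $\mathbb{P}^n$ with the standard blocks your $C^{q-n}=(\mathcal{H}^{n-q})^{\vee}\boxtimes\mathcal{F}^{q}$ reproduces the classical Koszul resolution $\mathcal{O}(-k)\boxtimes\Omega^{k}(k)$ in degree $-k$). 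The genuine gap is the step you yourself flag as the heart of the matter: the existence of a resolution of $\mathcal{O}_\Delta$ whose terms are the \emph{pure} blocks $\bigoplus_{p}(\mathcal{H}^{n-q}_{p})^{\vee}\boxtimes\mathcal{F}^{q}_{p}$ placed in degree $q-n$. Asserting that ``the block hypothesis forces the staircase resolution to split into pure blocks'' is a plausibility statement, not an argument: for a general full exceptional collection on an arbitrary smooth projective $X$ there is no Koszul complex to hand, and the resolution must actually be \emph{constructed} --- e.g.\ as the convolution of the Postnikov system on $X\times X$ attached to the semiorthogonal decomposition generated by the objects $(\mathcal{H}^{i}_{j})^{\vee}\boxtimes\mathcal{F}^{l}_{p}$, where the intra-block orthogonality and the defining relations ${\rm Hom}^{k}(\mathcal{H}^i_j,\mathcal{F}^l_p)=0$ except ${\rm Ext}^i(\mathcal{H}^i_j,\mathcal{F}^{n-i}_j)\simeq\kappa$ are what make the successive cones collapse to single blocks in the right degrees. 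That construction is precisely the content of the Costa--Mir\'o-Roig theorem, and your ``alternative, diagonal-free route'' via the Postnikov tower of $\mathcal{E}$ itself is in fact closer to how the cited sources argue. As written, your proposal is a correct reduction of the lemma to that known resolution theorem rather than a self-contained proof; supplying the Postnikov-system construction of the diagonal resolution (or of the tower for $\mathcal{E}$) is what remains to be done.
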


The cohomological characterisation for monads is the following, cf. \cite[Thm 2.2]{SMM}.

\begin{theorem}\label{carmon}
Let $X$ be a nonsingular projective variety of dimension $n$, and let
${\bf B} = (\boldsymbol{\mathcal{F}}_0, \cdots, \boldsymbol{\mathcal{F}}_n)$, where 
$\boldsymbol{\mathcal{F}}_i = (\mathcal{F}^i_1, \cdots, \mathcal{F}^i_{t_i})$, be an $n$-block collection of coherent sheaves on $X$ generating $D^b(X)$. Let ${}^{\vee}{\bf B} $ be its left dual $n$-block collection, and let $\mathcal{F}^i_{i_0}$, $\mathcal{F}^j_{j_0}$, and $\mathcal{F}^k_{k_0}$ be elements of the blocks $\boldsymbol{\mathcal{F}}_i, \boldsymbol{\mathcal{F}}_j$ and 
$\boldsymbol{\mathcal{F}}_k$, respectively, with $0 \leq i < j< k \leq n$.

A torsion-free sheaf $\mathcal{E}$ on $X$ is the cohomology sheaf of a monad of the form
\begin{equation} \label{moncarmon}
M^{\bullet}:
\xymatrix{({\mathcal{F}}^i_{i_0})^{a} \ar[r] & ({\mathcal{F}}^j_{j_0})^b \ar[r] & ({\mathcal{F}}^{k}_{k_0})^c }
\end{equation}
for some $b\ge 1$ and $a,c\ge0$ if and only if  $\mathcal{E}$ has:
\begin{itemize}
\item[(1)] rank $b\cdot\rk({\mathcal{F}}^j_{j_0}) - a\cdot\rk({\mathcal{F}}^i_{i_0}) - c\cdot\rk({\mathcal{F}}^{k}_{k_0})$; 
\item[(2)] Chern polynomial $c_t(\mathcal{E}) = c_t(F^j_{j_0})^b c_{t}(F^i_{i_0})^{-a}c_t(F_{k_0}^k)^{-c} $;
\item[(3)] natural cohomology with respect to ${}^{\vee}{\bf B}$.
\end{itemize}
\end{theorem}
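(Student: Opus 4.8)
```latex
\noindent\textbf{Proof proposal for Theorem \ref{carmon}.}

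The plan is to realise the monad structure on $\mathcal{E}$ as the Beilinson monad $G^{\bullet}$ produced by Lemma \ref{beilinsonlemma}, and to read off the three stated conditions as exactly what is needed for that Beilinson monad to collapse to the required three-term shape \eqref{moncarmon}. First I would prove the ``only if'' direction: assuming $\mathcal{E}$ is the cohomology of a monad of the form \eqref{moncarmon}, conditions (1) and (2) are immediate from the additivity of rank and the multiplicativity of the total Chern class on the short exact sequences $0\to \im\alpha\to\ker\beta\to\mathcal{E}\to 0$ and $0\to\ker\beta\to(\mathcal{F}^j_{j_0})^b\to(\mathcal{F}^k_{k_0})^c\to 0$ together with $0\to(\mathcal{F}^i_{i_0})^a\to\ker\beta\to\cdots$. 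For condition (3), I would apply $\mathrm{Hom}(\mathcal{F}^q_p,-)$ for the members of the left dual collection ${}^{\vee}{\bf B}$ to the monad complex; since the blocks $\boldsymbol{\mathcal{F}}_i,\boldsymbol{\mathcal{F}}_j,\boldsymbol{\mathcal{F}}_k$ are mutually orthogonal except in the prescribed degrees (this is the defining property of the left dual $n$-block collection), the hypercohomology spectral sequence degenerates and forces $\mathrm{Ext}^\bullet(\mathcal{H}^p_q,\mathcal{E})$ to be concentrated in a single degree for each member, which is precisely natural cohomology with respect to ${}^{\vee}{\bf B}$.

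For the ``if'' direction, I would start from Lemma \ref{beilinsonlemma}, which expresses \emph{any} coherent sheaf $\mathcal{E}$ as a Beilinson monad $G^{\bullet}$ with $G^r=\bigoplus_{p,q}\mathrm{Ext}^{n-q+r}(\mathcal{H}^{n-q}_p,\mathcal{E})\otimes\mathcal{F}^q_p$. Condition (3), natural cohomology with respect to ${}^{\vee}{\bf B}$, ensures that for each pair $(p,q)$ at most one value of $r$ contributes, so each $G^r$ is a direct sum of distinct blocks with multiplicities given by the relevant $\mathrm{Ext}$ dimensions; this is the key structural input. Then conditions (1) and (2) pin down those multiplicities: the rank and Chern-class bookkeeping, combined with the vanishing forced by (3) and the block-orthogonality of ${\bf B}$ and ${}^{\vee}{\bf B}$, show that the only nonzero $\mathrm{Ext}$ groups are $\mathrm{Ext}^{n-i}(\mathcal{H}^{n-i}_{i_0},\mathcal{E})=\kappa^a$, $\mathrm{Ext}^{n-j}(\mathcal{H}^{n-j}_{j_0},\mathcal{E})=\kappa^b$, and $\mathrm{Ext}^{n-k}(\mathcal{H}^{n-k}_{k_0},\mathcal{E})=\kappa^c$ (here one uses $i<j<k$ to place the three contributions in consecutive cohomological degrees of $G^{\bullet}$, after a shift). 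Hence $G^{\bullet}$ reduces to the three-term complex $(\mathcal{F}^i_{i_0})^a\to(\mathcal{F}^j_{j_0})^b\to(\mathcal{F}^k_{k_0})^c$ with cohomology $\mathcal{E}$ in the middle; injectivity of the first map and surjectivity of the last follow from $\mathcal{E}$ being torsion-free together with the exactness of $G^{\bullet}$ away from degree $0$ (generic local freeness plus a rank count on the cokernel/kernel terms).

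The main obstacle I anticipate is the combinatorial check that the grading of $G^{\bullet}$ induced by the three cohomological jumps of $\mathcal{E}$ lines the three surviving summands up in \emph{consecutive} homological positions, so that no connecting differentials are forced to vanish for degree reasons and so that the resulting complex genuinely has the monad shape rather than a longer complex with zero objects interspersed. This requires carefully tracking the indices $n-q+r$ against $0\le i<j<k\le n$ and invoking the $u_i=u_{m-i}$ symmetry of the left dual collection; it is essentially the argument of \cite[Thm 2.2]{SMM}, which I would cite for the bookkeeping details while emphasising the conceptual steps above. A secondary subtlety is ensuring $\mathcal{E}$ is locally free (not merely torsion-free) is not needed for the ``if'' direction as stated, but one should note that when $\mathcal{E}$ is locally free the monad is a monad in the sense of Definition \ref{defmon} with locally free terms throughout.
```
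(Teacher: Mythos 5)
First, a point of comparison: the paper does not prove Theorem \ref{carmon} at all --- it is quoted from \cite[Thm 2.2]{SMM}, with only a remark that the degenerate cases $a,c\geq 0$ follow ``the same steps''. So your sketch must be judged against the Marques--Soares argument, and in outline it does follow it. Your ``only if'' direction is essentially complete: rank and $c_t$ follow from the two short exact sequences, and natural cohomology follows because each term of $M^{\bullet}$ is a power of a single $\mathcal{F}^q_{q_0}$, so the orthogonality defining ${}^{\vee}{\bf B}$ leaves at most one nonzero entry on the first page of the hypercohomology spectral sequence for $\mathrm{RHom}(\mathcal{H}^m_p,M^{\bullet})$ (note you wrote $\mathrm{Hom}(\mathcal{F}^q_p,-)$ where you mean the dual objects $\mathcal{H}^m_p$). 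The ``if'' direction also correctly identifies Lemma \ref{beilinsonlemma} plus condition (3) as the engine.

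The gap is precisely the step you flag as the ``main obstacle'', and it is the actual content of the theorem, not bookkeeping. Two concrete issues. (i) The surviving groups are $\mathrm{Ext}^{n-i-1}(\mathcal{H}^{n-i}_{i_0},\mathcal{E})=\kappa^a$, $\mathrm{Ext}^{n-j}(\mathcal{H}^{n-j}_{j_0},\mathcal{E})=\kappa^b$, $\mathrm{Ext}^{n-k+1}(\mathcal{H}^{n-k}_{k_0},\mathcal{E})=\kappa^c$, i.e.\ degrees $n-q+r$ with $r=-1,0,1$; these are exactly the ones the paper uses in Theorem \ref{carsteiner} and in Section 5.2, and your $n-i$, $n-k$ are off by one --- ``after a shift'' does not repair this, because the homological positions are forced, not chosen. (ii) The assertion that (1) and (2) ``pin down'' the multiplicities needs a mechanism. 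The available one is that $\{[\mathcal{F}^q_p]\}$ is a basis of $K(X)_{\mathbb{Q}}$ and $[\mathcal{E}]=\sum_r(-1)^r[G^r]$, so rank and Chern polynomial determine every signed multiplicity $(-1)^r\dim\mathrm{Ext}^{n-q+r}(\mathcal{H}^{n-q}_p,\mathcal{E})$; under natural cohomology this kills all summands except the three named ones and fixes $a,b,c$ together with the \emph{parity} of each surviving degree $r$, but not $r$ itself. One must then use that $G^{\bullet}$ is exact outside degree $0$ (so $G^0\neq 0$ and there are no gaps, forcing the two odd survivors into degrees $\pm 1$) and the exceptional-collection vanishing $\mathrm{Ext}^{\bullet}(\mathcal{F}^l_{l_0},\mathcal{F}^{l'}_{l'_0})=0$ for $l>l'$ to decide which odd survivor sits in degree $-1$ and which in degree $+1$ (e.g.\ a summand $(\mathcal{F}^k_{k_0})^c$ in degree $-1$ would map by zero to $G^0=(\mathcal{F}^j_{j_0})^b$, contradicting exactness at $-1$). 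None of this appears in your sketch; deferring it wholesale to \cite{SMM} is defensible given that the paper does the same, but as written the proposal is an outline of a citation rather than a proof.
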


\begin{remark}\rm
The original statement of \cite[Thm 2.2]{SMM} requires $a,b,c \geq 1$. However, following the same steps of the proof of \cite[Thm 2.2]{SMM}, one can prove that the result also holds for $a,c\geq0$; in other words, one can allow for degenerate monads.
\end{remark}

This result will be very useful in the last section of this paper, in which we study the decomposability of sheaves given by the cohomology of monads of the above form.


\section{Cokernel and Steiner bundles}

In this section we explain the relation between cokernel and Steiner bundles and representations of the Kronecker quiver.

\subsection{Cokernel bundles}

Let $\mathcal{E}$ and $\mathcal{F}$ be vector bundles on a nonsingular projective variety $X$ of dimension $n \geq 2$, satisfying the following conditions:

\begin{itemize}
\item[$(1)$] $\mathcal{E}$ and $\mathcal{F}$ are simple, that is, $\rm{Hom}(\mathcal{E},\mathcal{E}) = \rm{Hom}(\mathcal{F},\mathcal{F}) = \kappa$;
\item[$(2)$] ${\rm Hom}(\mathcal{F},\mathcal{E}) =0;$
\item[$(3)$] ${\rm Ext}^{1}(\mathcal{F},\mathcal{E}) = 0$;
\item[$(4)$] the sheaf $\mathcal{E}^{\lor} \otimes \F$ is globally generated;
\item[$(5)$] $W = {\rm Hom}(\mathcal{E}, \mathcal{F})$ has dimension $w \geq 3$.
\end{itemize}

The next definition is due to Brambilla \cite{Bb}.

\begin{definition}
A \emph{cokernel bundle of type $(\mathcal{E},\mathcal{F})$} on $\pn$ is a vector bundle $\mathcal{C}$ with resolution of the form
\begin{eqnarray}\label{cok}
\xymatrix{ 0  \ar[r] & \mathcal{E}^{a} \ar[r]^{\alpha} & \mathcal{F}^{b} \ar[r] & \mathcal{C} \ar[r]& 0 }
\end{eqnarray}  
where $\mathcal{E}, \mathcal{F}$ satisfy the conditions (1) through (5) above, $a\ge 0$ and
$b\cdot\rk(\mathcal{F}) - a\cdot\rk(\mathcal{E}) \geq n$.
\end{definition}

Cokernel bundles of type $(\mathcal{E},\mathcal{F})$ form a full subcategory of the category of coherent sheaves on $X$; this category will be denoted by $\mathfrak{C}_X(\mathcal{E},\mathcal{F})$. 

%
%

Let us now see how cokernel bundles are related to quivers. Fix a basis $\boldsymbol{\sigma} = \{\sigma_1, \cdots, \sigma_w\}$ of ${\rm Hom}(\mathcal{E},\mathcal{F})$. 

\begin{definition}
A representation $R = (\{\kappa^a, \kappa^b\}, \{A_i\}_{i=1}^w)$ of $K_w$ is $(\mathcal{E},\mathcal{F}, \boldsymbol{\sigma})$-\emph{globally injective} when the map 
$$ \alpha(P) := \sum_{i=1}^w A_i \otimes \sigma_i(P) ~:~ 
\kappa^a\otimes \mathcal{E}_P \to \kappa^b\otimes \mathcal{F}_P $$
is injective for every $P \in X$; here, $\mathcal{E}_P$ and $\mathcal{F}_P$ denote the fibers of $\mathcal{E}$ and $\mathcal{F}$ over the point $P$, respectively. 
\end{definition}

$(\mathcal{E},\mathcal{F}, \boldsymbol{\sigma})$-globally injective representations of $K_w$ form a full subcategory of the category of representations of $K_w$; we denote it by $\mathfrak{R}(K_w)^{gi}$. From now on, since $(\mathcal{E},\mathcal{F}, \boldsymbol{\sigma})$ are fixed, we will just refer to globally injective representations. It is a simple exercise to establish the following properties of $\mathfrak{R}(K_w)^{gi}$.

\begin{lemma}\label{subob}
The category $\mathfrak{R}(K_w)^{gi}$ is closed under sub-objects, i.e. every subrepresentation $R'$ of a representation $R$ in $\mathfrak{R}(K_w)^{gi}$ is also in $\mathfrak{R}(K_w)^{gi}$.
\end{lemma}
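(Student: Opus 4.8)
The plan is to show that a subrepresentation $R' = (\{\kappa^{a'}, \kappa^{b'}\}, \{A'_i\})$ of a globally injective representation $R = (\{\kappa^a, \kappa^b\}, \{A_i\})$ is again globally injective. First I would record what a subrepresentation means in concrete terms: there are injective linear maps $\iota_1 : \kappa^{a'} \hookrightarrow \kappa^a$ and $\iota_2 : \kappa^{b'} \hookrightarrow \kappa^b$ such that for every arrow $i$ the square $A_i \circ \iota_1 = \iota_2 \circ A'_i$ commutes. The key observation is that these two diagrams can be tensored, fiberwise, with the evaluation maps $\sigma_i(P)$ to relate the two twisted morphisms $\alpha'(P) := \sum_i A'_i \otimes \sigma_i(P)$ and $\alpha(P) := \sum_i A_i \otimes \sigma_i(P)$.

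The main step is the identity, valid at every point $P \in X$,
$$
(\iota_2 \otimes \mathrm{id}_{\mathcal{F}_P}) \circ \alpha'(P) \;=\; \alpha(P) \circ (\iota_1 \otimes \mathrm{id}_{\mathcal{E}_P}),
$$
which follows by summing the relations $A_i \circ \iota_1 = \iota_2 \circ A'_i$ after tensoring with $\sigma_i(P)$ and using bilinearity of the tensor product. Here $\iota_1 \otimes \mathrm{id}_{\mathcal{E}_P} : \kappa^{a'} \otimes \mathcal{E}_P \to \kappa^a \otimes \mathcal{E}_P$ is injective because $\iota_1$ is injective and $\mathcal{E}_P$ is a (finite-dimensional) vector space, so tensoring with it preserves injectivity; similarly $\iota_2 \otimes \mathrm{id}_{\mathcal{F}_P}$ is injective. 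Now suppose $\alpha'(P)$ kills some $v \in \kappa^{a'} \otimes \mathcal{E}_P$. Then the left-hand side of the identity kills $v$, hence so does the right-hand side; but $\alpha(P)$ is injective by hypothesis on $R$, so $(\iota_1 \otimes \mathrm{id}_{\mathcal{E}_P})(v) = 0$, and injectivity of $\iota_1 \otimes \mathrm{id}_{\mathcal{E}_P}$ forces $v = 0$. Thus $\alpha'(P)$ is injective for every $P$, which is precisely the condition that $R' \in \mathfrak{R}(K_w)^{gi}$.

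There is really no serious obstacle here — the statement is, as the text says, a simple exercise — so the only thing to be careful about is the bookkeeping of tensor factors and the fact that injectivity is preserved under tensoring with a fixed vector space over a field (no flatness subtleties arise since everything in sight is a vector space). One could phrase the whole argument slightly more conceptually by noting that $R \mapsto \alpha(P)$ is a functor from $\mathfrak{R}(K_w)$ to the category of linear maps of $\kappa$-vector spaces (a "twisting" functor induced by evaluation at $P$), that it is exact and in particular sends monomorphisms to monomorphisms, and that injectivity of $\alpha'(P)$ is then inherited from injectivity of $\alpha(P)$ via the monomorphism $R' \hookrightarrow R$; but the direct diagram chase above is self-contained and sufficient.
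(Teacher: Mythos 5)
Your proof is correct; the paper itself leaves this lemma as a "simple exercise" and gives no argument, and your diagram chase — tensoring the commuting squares $A_i\circ\iota_1=\iota_2\circ A'_i$ with $\sigma_i(P)$, summing, and using that tensoring with a fixed vector space over a field preserves injectivity — is precisely the intended elementary verification.
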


\begin{lemma}
The category $\mathfrak{R}(K_{w})^{gi}$ is closed under extensions and under direct summands, that is, respectively:
\begin{itemize}
\item[(i)] if $R_1, R_2 \in \mathfrak{R}(K_w)^{gi}$ and 
$$ \xymatrix{0 \ar[r] & R_1 \ar[r] & R \ar[r] & R_2 \ar[r] & 0} $$
is a short exact sequence in $\mathfrak{R}(K_w)^{gi}$, then $R \in \mathfrak{R}(K_w)^{gi}$;
\item[(ii)] if $R \in \mathfrak{R}(K_w)^{gi}$ with $R \simeq R_1 \oplus R_2$, then 
$R_i \in \mathfrak{R}(K_w)^{gi}$, $i=1,2$.
\end{itemize}
\end{lemma}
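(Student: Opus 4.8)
The plan is to verify both statements directly from the definition of $(\mathcal{E},\mathcal{F},\boldsymbol{\sigma})$-global injectivity, since the condition is fiberwise and linear-algebraic. First I would set up notation: for a representation $R=(\{\kappa^a,\kappa^b\},\{A_i\})$ and a point $P\in X$, write $\alpha_R(P)=\sum_{i=1}^w A_i\otimes\sigma_i(P):\kappa^a\otimes\mathcal{E}_P\to\kappa^b\otimes\mathcal{F}_P$; global injectivity of $R$ means $\alpha_R(P)$ is injective for all $P$. The key observation is that a short exact sequence $0\to R_1\to R\to R_2\to 0$ in $\mathfrak{R}(K_w)$ gives, after choosing compatible bases, block-triangular matrices $A_i=\begin{pmatrix}A_i^{(1)} & * \\ 0 & A_i^{(2)}\end{pmatrix}$, hence for every $P$ a commutative diagram with exact rows
$$\xymatrix{
0 \ar[r] & \kappa^{a_1}\otimes\mathcal{E}_P \ar[r]\ar[d]_{\alpha_{R_1}(P)} & \kappa^{a}\otimes\mathcal{E}_P \ar[r]\ar[d]_{\alpha_R(P)} & \kappa^{a_2}\otimes\mathcal{E}_P \ar[r]\ar[d]_{\alpha_{R_2}(P)} & 0 \\
0 \ar[r] & \kappa^{b_1}\otimes\mathcal{F}_P \ar[r] & \kappa^{b}\otimes\mathcal{F}_P \ar[r] & \kappa^{b_2}\otimes\mathcal{F}_P \ar[r] & 0
}$$
(the rows are exact because $\mathcal{E}_P$, $\mathcal{F}_P$ are just vector spaces, so tensoring the exact sequences of dimension vectors stays exact).

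For part (i), I would apply the snake lemma (or a direct diagram chase) to this diagram: if the outer two vertical maps $\alpha_{R_1}(P)$ and $\alpha_{R_2}(P)$ are injective, then so is the middle one $\alpha_R(P)$. Since this holds for every $P\in X$, $R$ is globally injective. For part (ii), suppose $R\simeq R_1\oplus R_2$ with $R$ globally injective; then $\alpha_R(P)=\alpha_{R_1}(P)\oplus\alpha_{R_2}(P)$ as a block-diagonal map, and a direct sum of linear maps is injective if and only if each summand is. Hence each $\alpha_{R_i}(P)$ is injective for all $P$, so each $R_i$ is globally injective. (Alternatively, (ii) follows from (i) together with Lemma \ref{subob}, since a direct summand is in particular a subrepresentation.)

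I do not expect any serious obstacle here — both parts are elementary once one recognizes that global injectivity is a pointwise condition and that the functor $R\mapsto(\alpha_R(P))_{P\in X}$ is built out of exact (tensor-over-a-field) and additive operations. The only mild care needed is in part (i): one must note that a subrepresentation and quotient representation of a short exact sequence in $\mathfrak{R}(K_w)$ yield, fiberwise, genuinely exact sequences of vector spaces, so that tensoring with $\mathcal{E}_P$ preserves exactness and the snake lemma applies; this is immediate over a field. It is worth remarking that closure under quotients fails in general (the quotient map need not remain injective on fibers), which is why the statement is phrased only for extensions and summands.
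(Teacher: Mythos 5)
Your proof is correct, and it is precisely the elementary fiberwise argument the paper has in mind: the authors state this lemma (together with Lemma \ref{subob}) without proof, calling it ``a simple exercise.'' Both your diagram-chase for extensions and your block-diagonal observation (or the alternative reduction of (ii) to Lemma \ref{subob}) are sound, so nothing needs to be added.
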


Our next result relates the category of globally injective representations of $K_w$ to the category of cokernel bundles.

\begin{theorem}\label{Teo1}
For every choice of basis $\boldsymbol{\sigma}$ of ${\rm Hom}(\E,\F)$, there is an equivalence between $\mathfrak{R}(K_w)^{gi}$, the category of $(\E,\F, \boldsymbol{\sigma})$-globally injective representations of $K_w$, and $\mathfrak{C}_X(\E,\F)$, the category of cokernel bundles of type $(\E,\F)$.
\end{theorem}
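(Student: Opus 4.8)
The plan is to construct a pair of functors $\Phi:\mathfrak{R}(K_w)^{gi}\to\mathfrak{C}_X(\E,\F)$ and $\Psi:\mathfrak{C}_X(\E,\F)\to\mathfrak{R}(K_w)^{gi}$ and show they are mutually quasi-inverse. For the direction $\Phi$, given a globally injective representation $R=(\{\kappa^a,\kappa^b\},\{A_i\})$, form the sheaf morphism $\alpha=\sum_i A_i\otimes\sigma_i:\E^a\to\F^b$; by global injectivity $\alpha$ is a fiberwise-injective map of vector bundles, hence a subbundle inclusion, so $\mathcal{C}:=\operatorname{coker}\alpha$ is locally free and fits in a short exact sequence of the required shape. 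One must check that $\mathcal{C}$ genuinely lies in $\mathfrak{C}_X(\E,\F)$: the rank condition $b\cdot\rk\F-a\cdot\rk\E\ge n$ should either be imposed as part of the definition of $\mathfrak{R}(K_w)^{gi}$ or follows automatically — I expect the cleanest route is to note that fiberwise injectivity of an $\E^a\to\F^b$ map forces $a\,\rk\E\le b\,\rk\F$, and the stronger inequality with $n$ may need to be folded into the definition (the excerpt defines $\mathfrak{R}(K_w)^{gi}$ purely by global injectivity, so here I would either add that hypothesis or observe it is forced by global generation plus dimension counting). On morphisms, a morphism $(f_1,f_2):R\to R'$ is a pair making the squares with all $A_i$ commute; tensoring with $\sigma_i$ and summing shows $f_1\otimes\id_\E$ and $f_2\otimes\id_\F$ intertwine $\alpha$ and $\alpha'$, hence descend to a morphism on cokernels $\Phi(f)$. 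Functoriality is immediate.

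For the reverse direction $\Psi$, given a cokernel bundle $\mathcal{C}$ with resolution $0\to\E^a\xrightarrow{\alpha}\F^b\to\mathcal{C}\to0$, I need to recover the maps $A_i$. The key point is that $\operatorname{Hom}(\E^a,\F^b)\cong\operatorname{Mat}_{b\times a}(\operatorname{Hom}(\E,\F))\cong\bigoplus_{i=1}^w\operatorname{Hom}(\kappa^a,\kappa^b)$, the last isomorphism using the fixed basis $\boldsymbol{\sigma}$; so $\alpha$ decomposes uniquely as $\sum_i A_i\otimes\sigma_i$, and we set $\Psi(\mathcal{C})=(\{\kappa^a,\kappa^b\},\{A_i\})$, which is globally injective precisely because $\alpha$ is a subbundle inclusion. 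The subtlety is well-definedness on objects: the resolution of $\mathcal{C}$ is only unique up to automorphisms of $\E^a$ and $\F^b$, so $\Psi$ is only well-defined up to isomorphism of representations — which is exactly what an equivalence of categories allows. Here conditions (1)–(3) enter crucially: simplicity of $\E$ and $\F$ together with $\operatorname{Hom}(\F,\E)=0$ pin down $\operatorname{Aut}(\E^a)=GL_a(\kappa)$ and $\operatorname{Aut}(\F^b)=GL_b(\kappa)$ and guarantee there are no "off-diagonal" endomorphisms, so two resolutions of isomorphic $\mathcal{C}$ differ by a $(GL_a\times GL_b)$-change which is exactly a representation isomorphism. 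For morphisms: given $\phi:\mathcal{C}\to\mathcal{C}'$, one lifts it to a chain map of the resolutions; existence and uniqueness of the lift again use $\operatorname{Hom}(\F,\E)=0$ and $\operatorname{Ext}^1(\F,\E)=0$ (surjectivity of $\operatorname{Hom}(\F^b,\F^b)\to\operatorname{Hom}(\F^b,\mathcal{C})$ and injectivity coming from the long exact sequences obtained by applying $\operatorname{Hom}(\F^b,-)$ and $\operatorname{Hom}(\E^a,-)$ to $0\to\E^a\to\F^b\to\mathcal{C}\to0$). The lifted pair, decomposed against $\boldsymbol{\sigma}$, gives $\Psi(\phi)$.

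Finally I would verify $\Phi\circ\Psi\cong\id$ and $\Psi\circ\Phi\cong\id$. One composite is essentially tautological: starting from a representation, building $\alpha$, taking the cokernel, and re-decomposing $\alpha$ against $\boldsymbol{\sigma}$ returns the same $A_i$ on the nose. The other composite sends $\mathcal{C}$ to $\operatorname{coker}$ of the reconstructed $\alpha$, which is canonically isomorphic to $\mathcal{C}$ via the original resolution, and one checks this family of isomorphisms is natural in $\mathcal{C}$ — naturality again reduces to the uniqueness-of-lifts statement above. It also remains to confirm that $\Phi$ and $\Psi$ land in the asserted full subcategories and that the equivalence is compatible with the full-subcategory structure, i.e. that $\mathfrak{R}(K_w)^{gi}$ really is the essential image of $\Psi$; this is where the earlier lemmas (closure under sub-objects, extensions, summands) make the statement clean, and where one should remark that not every representation is globally injective, matching the fact that not every $\alpha$ has locally free cokernel.

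The step I expect to be the main obstacle is the well-definedness and faithfulness bookkeeping on morphisms: showing that a sheaf morphism between cokernel bundles lifts \emph{uniquely} to a representation morphism, and that this assignment is inverse to $\Phi$ on morphisms, all hinges delicately on the vanishing conditions (1)–(3), and one has to be careful that the lift is unique only after fixing the chosen resolutions (so strictly speaking one works with a fixed resolution for each isomorphism class, or phrases everything up to the ambiguity built into an equivalence). The geometric input — that fiberwise injectivity implies the cokernel is a vector bundle — is standard (a fiberwise-injective morphism of vector bundles of constant ranks on a reduced scheme is a subbundle), and conditions (2) and (4) are what make the definition of cokernel bundle non-vacuous, but they do not pose real difficulty in the proof itself.
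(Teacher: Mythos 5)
Your proposal is correct and follows essentially the same route as the paper: the paper proves the equivalence by showing the single functor ${\bf L}_{\boldsymbol{\sigma}}$ (your $\Phi$) is essentially surjective (your $\Psi$ on objects, i.e.\ the unique decomposition $\alpha=\sum_i A_i\otimes\sigma_i$ against the fixed basis) and fully faithful (your lifting of morphisms via the vanishing of ${\rm Hom}(\F,\E)$ and ${\rm Ext}^1(\F,\E)$ together with simplicity of $\E$ and $\F$), which is the standard repackaging of constructing a quasi-inverse. Your side remark that the inequality $b\cdot\rk\F-a\cdot\rk\E\ge n$ is not literally forced by global injectivity and should be folded into the definition of $\mathfrak{R}(K_w)^{gi}$ is a legitimate observation about a small imprecision the paper also glosses over, not a gap in your argument.
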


\begin{proof}
Given a basis $\boldsymbol{\sigma}$ of ${\rm Hom}(\E,\F)$, we construct a functor 
$$ {\bf L}_{\boldsymbol{\sigma}} : \mathfrak{R}(K_{w})^{gi} \rightarrow \mathfrak{C}_X(\E,\F) $$
and show that it is essentially surjective and fully faithful. 

Let $R = (\{\kappa^a, \kappa^b\}, \{A_i\}_{i=1}^w)$ be a globally injective representation of $K_{w}$. Define a map $\alpha : \E^{a} \rightarrow \F^{ b}$ given by 
$$ \alpha = A_1 \otimes  \sigma_1  + \cdots + A_w \otimes \sigma_w. $$
Since $R$ is globally injective, we have that $\dim {\rm coker}\, \alpha(P) = b\cdot\rk(\F) - a\cdot\rk(\E)$ for each $P \in X$. Therefore $\alpha$ is injective as a map of sheaves, and $\mathcal{C}:={\rm coker}\,\alpha$ is a cokernel bundle.

Now given two globally injective representations
$$R_{1} = (\{\kappa^a, \kappa^b\}, \{A_i\}_{i=1}^w) ~~{\rm and}~~  
R_2 = (\{\kappa^c, \kappa^d\}, \{B_i\}_{i=1}^w), $$
and a morphism $f=(f_1, f_2)$ between them, let ${\bf L}_{\boldsymbol{\sigma}}(R_1) = \mathcal{C}_1, {\bf L}_{\boldsymbol{\sigma}}(R_2) = \mathcal{C}_2$ be the cokernel bundles and $\alpha_1$, $\alpha_2$ the maps associated to $R_{1}$ and $R_{2}$, respectively. We want to define a morphism ${\bf L}_{\boldsymbol{\sigma}}(f): \mathcal{C}_1 \rightarrow \mathcal{C}_2$.

Since we have $f_1: \kappa^a \rightarrow \kappa^c, f_2: \kappa^b \rightarrow \kappa^d$, we have maps
$f_{1}^{'} = f_1 \otimes \mathds{1}_{\E} \in {\rm{Hom}}(\mathcal{E}^a,\E^c)$ and
$f_{2}^{'} = f_2 \otimes  \mathds{1}_{\F} \in {\rm Hom}(\mathcal{F}^b, \F^d)$. Consider the diagram
\begin{eqnarray}\label{diagg}
\xymatrix{ 0 \ar[r] & \E^a \ar[d]_{f_{1}^{'}} \ar[r]^{\alpha_1} & \F^b \ar[d]_{f_{2}^{'}} \ar[r]^{\pi_1}&  \mathcal{C}_1 \ar[r] \ar@{-->}[d] & 0\\
0 \ar[r] & \E^c \ar[r]^{\alpha_2} & \F^d \ar[r]^{\pi_2} & \mathcal{C}_2 \ar[r] & 0}
\end{eqnarray} 
where $\pi_1, \pi_2$ are the projections. Applying the left exact contravariant functor
${\rm Hom}( - , \mathcal{C}_2)$ to the upper sequence on (\ref{diagg}) we find a map
$\phi \in {\rm Hom}(\mathcal{C}_1,\mathcal{C}_2)$ and we define ${\bf L}_{\boldsymbol{\sigma}}(f) := \phi$.

Now given $\mathcal{C}$ an object of $\mathfrak{C}_X(\E,\F)$ we take $\alpha  = \sum_{i=1}^{w} A_i \otimes \sigma_i$, with \linebreak $A_i \in {\rm Hom}(\kappa^a, \kappa^b), i=1, \cdots, w.$ Hence
$R = (\{\kappa^a, \kappa^b\}, \{A_i\}_{i=1}^w)$ is a globally injective representation of $\mathfrak{R}(K_{w})$ such that ${\bf L}_{\boldsymbol{\sigma}}(R) = \mathcal{C}$. Therefore ${\bf L}_{\boldsymbol{\sigma}}$ is essentially surjective.

Finally, we need to prove that ${\bf L}_{\boldsymbol{\sigma}}$ is fully faithful.  To check that it is full,  given
$\phi \in {\rm Hom}({\bf L}_{\boldsymbol{\sigma}}(R_1), {\bf L}_{\boldsymbol{\sigma}}(R_2))$ we want $f = (f_1,f_2) \in {\rm Hom}(R_1,R_2)$ such that ${\bf L}_{\boldsymbol{\sigma}}(f)= \phi$. Let $\tilde{\phi} = \phi \pi_1 \in {\rm Hom}(\F^b, \mathcal{C}_2)$. Let us apply the left exact covariant functor
${\rm Hom}(\F^b, -)$ to the lower sequence in diagram (\ref{diagg1}) below:

\begin{eqnarray}\label{diagg1}
\xymatrix{ 0 \ar[r] & \E^a \ar@{-->}[d]_{f_{1}^{'}} \ar[r]^{\alpha_1} & \F^b \ar@{-->}[d]_{f_{2}^{'}} \ar[r]^{\pi_1}&  \mathcal{C}_1 \ar[r] \ar[d]^\phi & 0\\
0 \ar[r] & \E^c \ar[r]^{\alpha_2} & \F^d \ar[r]^{\pi_2} & \mathcal{C}_2 \ar[r] & 0}
\end{eqnarray}

we conclude that 
\begin{eqnarray}\label{eqq}
\rho_2:  {\rm Hom}(\F^b,\F^d ) \rightarrow {\rm Hom}(\F^b, \mathcal{C}_2)
\end{eqnarray}
is an isomorphism since ${\rm Hom}(\F^b, \E^c) = {\rm Ext}^{1}(\F^b, \E^c) = 0$.
It follows that there is a morphism $f_{2}^{'}\in {\rm Hom}(\F^b,\F^d )$ such that
$$ \rho_2 (f^{'}_2) = \pi_2 f^{'}_2  = \phi \pi_1 $$
with $f^{'}_2 = f_2 \otimes \mathds{1}_{\F}$ and $f_2 \in {\rm Hom}(\kappa^b, \kappa^d)$.

Consider $\tilde{\tilde{\phi}} = f_{2}^{'} \alpha_1 \in {\rm Hom}(\E^a, \F^d)$. Applying the left exact covariant functor
${\rm Hom}(\E^a, -)$ to the lower sequence on (\ref{diagg1}) we get
$$ \xymatrix{ 
0\ar[r] & {\rm Hom}(\E^a, \E^c) \ar[r]^{\gamma_1}& {\rm Hom}(\E^a,\F^d) \ar[r]^-{\gamma_2} &  {\rm Hom}(\E^a, \mathcal{C}_2) \ar[r] &\cdots
}$$

Once we have an exact sequence,
$$ \gamma_2(f^{'}_2\alpha_1) = \pi_2 f_2^{'}\alpha_1 = \phi \pi_1 \alpha_1 = 0 $$
then $f^{'}_2 \alpha_1 \in \ker{\gamma_2} = {\rm im}\, \gamma_1 $, and there is a map $f^{'}_1 \in {\rm Hom}(\E^a, \E^c)$ such that
$\gamma_1 (f_{1}^{'}) = \alpha_2 f^{'}_{1} = f^{'}_2 \alpha_1$ and $f^{'}_1 = f_1 \otimes \mathds{1}_{\E}$ with
$f_1 \in {\rm Hom}(\kappa^a, \kappa^c)$.

Since $\alpha_1 = \sum_{i=1}^{w} A_i \otimes \sigma_i$, $\alpha_2 = \sum_{i=1}^{w} B_i \otimes \sigma_i$, $\alpha_2 f^{'}_1 = f^{'}_2 \alpha_1$, and $\boldsymbol{\sigma}$ is a basis then $f_2 A_i = B_i f_1, i= 1, \cdots, w$, thus
$$ f = (f_1, f_2) \in {\rm Hom}_{\mathfrak{R}(K_w)^{gi}}(R_1, R_2). $$

Now we need to prove that ${\bf L}_{\boldsymbol{\sigma}}(f) = \phi$. Suppose ${\bf L}_{\boldsymbol{\sigma}}(f) = \overline{\phi}$ such that
$\overline{\phi} \pi_1 = \pi_2 f^{'}_2 = \phi \pi_1$. Then $(\overline{\phi} - \phi) \pi_1 = 0$ and
$\mathcal{C}_1 = {\rm im}\, \pi_1 \subset \ker(\overline{\phi} - \phi)$  therefore $\overline{\phi} = \phi$.

Finally, we show that
${\bf L}_{\boldsymbol{\sigma}}:{\rm Hom}(R_1, R_2) \rightarrow {\rm Hom}({\bf L}_{\boldsymbol{\sigma}}(R_1), {\bf L}_{\boldsymbol{\sigma}}(R_2))$ is injective. Let $f = (f_1, f_2)$, $g=(g_1, g_2) \in {\rm Hom}(R_1, R_2)$ be morphisms such that ${\bf L}_{\boldsymbol{\sigma}}(f) = \phi_1 = \phi_2 = {\bf L}_{\boldsymbol{\sigma}}(g)$, that is, $\phi_1 - \phi_2 = 0$.

\begin{eqnarray}\label{diagg2}
\xymatrix{ 0 \ar[r] & \E^a \ar[d]_{f_{1}^{'} - g^{'}_1} \ar[r]^{\alpha_1} & \F^b \ar[d]_{f_{2}^{'} - g_{2}^{'}} \ar[r]^{\pi_1}&  \mathcal{C}_1 \ar[r] \ar[d]^0 & 0\\
0 \ar[r] & \E^c \ar[r]^{\alpha_2} & \F^d \ar[r]^{\pi_2} & \mathcal{C}_2 \ar[r] & 0}
\end{eqnarray}

Given $\phi_1 - \phi_2 = 0 \in {\rm Hom}(\mathcal{C}_1, \mathcal{C}_2)$, doing the same construction as before,
$$ 0 \pi_1 = 0 \in {\rm Hom}(\F^b , \mathcal{C}_2) \simeq {\rm Hom}(\F^b,\F^d )$$
with isomorphism given by $\rho_2$ in (\ref{eqq}). Since

$$\rho_2 (f^{'}_2 - g^{'}_2) = \pi_2 \circ (f^{'}_2 - g^{'}_2) = 0$$ then $f^{'}_2 - g^{'}_2 = 0$ and so $f^{'}_2 = g^{'}_2$. Similarly, $0 \alpha_1 = 0 \in {\rm Hom}(\E^a, \F^d)$ and

$$\gamma_1 (f^{'}_1 - g^{'}_1) = \alpha_2 (f^{'}_1 - g^{'}_1) = 0 \alpha_1 = 0.$$ Since $\gamma_1$ injective, $f^{'}_1 - g^{'}_1 = 0$, then $f^{'}_1 = g^{'}_1.$  Therefore ${\bf L}_{\boldsymbol{\sigma}}$ is faithful.

\end{proof}

\begin{remark} \rm
Note that the functor ${\bf L}_{\boldsymbol{\sigma}}$ depends on the choice of the basis $\boldsymbol{\sigma}$. However let $\boldsymbol{\sigma}'$ be another basis for
${\rm Hom}(\E,\F)$. Let ${\bf L}_{\boldsymbol{\sigma}'}$ be the equivalence between the category of $(\E,\F,\boldsymbol{\sigma}')$-globally injective representations of $K_w$ and the cokernel bundles on $\pn$. Then if ${\bf G}$ is the inverse functor of ${\bf L}_{\boldsymbol{\sigma}'}$ we have that the functor ${\bf G}\circ {\bf L}_{\boldsymbol{\sigma}'}$ gives an equivalence between the categories $(\E,\F,\boldsymbol{\sigma})$- and $(\E,\F,\boldsymbol{\sigma}')$-globally injective representations of $K_w$.
\end{remark}

\begin{lemma}\label{decomp1}
For any choice of basis $\boldsymbol{\sigma}$, the functor
${\bf L}_{\boldsymbol{\sigma}}:\mathfrak{R}(K_{w})^{gi}\rightarrow\mathfrak{C}_X(\E,\F)$ defined above is additive and exact. In particular, if $R \simeq R_1 \oplus R_2$ is a globally injective representation, then
${\bf L}_{\boldsymbol{\sigma}}(R) \simeq {\bf L}_{\boldsymbol{\sigma}}(R_1) \oplus {\bf L}_{\boldsymbol{\sigma}}(R_2)$.
\end{lemma}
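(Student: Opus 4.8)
The plan is to verify directly that $\mathbf{L}_{\boldsymbol{\sigma}}$ respects the abelian structure on both sides, using the explicit construction of the functor given in the proof of Theorem \ref{Teo1}. First I would observe that additivity is essentially immediate from the construction: given $R_1 = (\{\kappa^a,\kappa^b\},\{A_i\})$ and $R_2 = (\{\kappa^c,\kappa^d\},\{B_i\})$, the direct sum $R_1 \oplus R_2$ has underlying spaces $\kappa^{a+c}, \kappa^{b+d}$ and arrow maps $A_i \oplus B_i$, so the associated sheaf map is
$$ \alpha_{R_1\oplus R_2} = \sum_{i=1}^w (A_i \oplus B_i)\otimes \sigma_i = \Big(\sum_i A_i\otimes\sigma_i\Big)\oplus\Big(\sum_i B_i\otimes\sigma_i\Big) = \alpha_{R_1}\oplus\alpha_{R_2}, $$
where I use that tensoring by $\sigma_i$ and taking cokernel both commute with finite direct sums. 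Hence $\mathrm{coker}\,\alpha_{R_1\oplus R_2} \cong \mathrm{coker}\,\alpha_{R_1}\oplus\mathrm{coker}\,\alpha_{R_2}$, which gives the last assertion; I would also check on morphisms that $\mathbf{L}_{\boldsymbol{\sigma}}(f\oplus g) = \mathbf{L}_{\boldsymbol{\sigma}}(f)\oplus\mathbf{L}_{\boldsymbol{\sigma}}(g)$, which follows by the uniqueness of the induced map on cokernels (the same argument used for well-definedness of $\mathbf{L}_{\boldsymbol{\sigma}}$ on morphisms in the proof of Theorem \ref{Teo1}).

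For exactness, I would argue as follows. An equivalence of abelian categories is automatically exact, so it would suffice to know that $\mathbf{L}_{\boldsymbol{\sigma}}$ is an equivalence onto the full subcategory $\mathfrak{C}_X(\E,\F)$ \emph{and} that $\mathfrak{C}_X(\E,\F)$, as a full subcategory of coherent sheaves, inherits its exact structure compatibly — but $\mathfrak{C}_X(\E,\F)$ is not abelian on the nose, so I would instead verify exactness by hand. Let
$$ \xymatrix{0 \ar[r] & R_1 \ar[r] & R \ar[r] & R_2 \ar[r] & 0} $$
be a short exact sequence in $\mathfrak{R}(K_w)^{gi}$ (equivalently, a sequence of representations that is exact vertex-wise and all three terms are globally injective). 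Writing the corresponding sheaf maps $\alpha_1,\alpha,\alpha_2$ and fitting them into the $3\times 3$ diagram with rows $0\to \E^{a_j}\to\F^{b_j}\to\mathcal C_j\to 0$, the top two rows of representations give an exact sequence of the associated two-term complexes $[\E^{\bullet}\to\F^{\bullet}]$; since each $\alpha_j$ is injective as a sheaf map (global injectivity) and the vertical maps on the $\E$- and $\F$-terms form short exact sequences of sheaves, the snake lemma yields a short exact sequence $0\to\mathcal C_1\to\mathcal C\to\mathcal C_2\to 0$ of cokernels. Finally one checks these maps coincide with $\mathbf{L}_{\boldsymbol{\sigma}}$ applied to the given morphisms of representations, again by the cokernel universal property. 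This shows $\mathbf{L}_{\boldsymbol{\sigma}}$ sends short exact sequences to short exact sequences, i.e. it is exact.

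I expect the main (though still routine) obstacle to be bookkeeping rather than conceptual: making precise what a "short exact sequence in $\mathfrak{R}(K_w)^{gi}$" means — since $\mathfrak{R}(K_w)^{gi}$ is only a full subcategory of the abelian category $\mathfrak{R}(K_w)$ closed under sub-objects, extensions and summands (by the two preceding lemmas), one must clarify that exactness is tested in $\mathfrak{R}(K_w)$ and that all relevant terms land back in the subcategory — and then chasing the $3\times 3$ diagram carefully enough to see that the connecting maps are exactly the images under $\mathbf{L}_{\boldsymbol{\sigma}}$ of the structure morphisms. Once the diagram is set up, the snake lemma does all the work. The "in particular" clause about direct summands is then just the special case of a split short exact sequence, or alternatively follows directly from the additivity computation above.
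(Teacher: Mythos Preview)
Your proposal is correct and follows essentially the same approach as the paper: both set up the $3\times 3$ diagram of sheaf resolutions and extract exactness of the cokernel column, the only difference being that you invoke the snake lemma while the paper carries out the diagram chase by hand (proving injectivity of $\varphi$ and surjectivity of $\psi$ element-wise). For the direct-sum statement you argue additivity directly from $\alpha_{R_1\oplus R_2}=\alpha_{R_1}\oplus\alpha_{R_2}$, whereas the paper deduces it by applying exactness to the split short exact sequence and observing that the splitting is preserved; both are routine and equivalent.
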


\begin{proof}
Checking the additivity of ${\bf L}_{\boldsymbol{\sigma}}$ is a simple exercise. We show its exactness in detail.

Let us prove that ${\bf L}_{\boldsymbol{\sigma}}$ preserves exact sequences. Let
$R_1 = (\{\kappa^{a_1}, \kappa^{b_1}\},$ \linebreak $\{A_i\}_{i=1}^w)$, $R_2 = (\{\kappa^{a_2}, \kappa^{b_2}\}, \{B_i\}_{i=1}^w)$ and $R_3 = (\{\kappa^{a_3}, \kappa^{b_3}\},\{C_i\}_{i=1}^w )$ be globally injective representations of $K_{w}$ and let $f: R_1 \rightarrow R_2$ and $g: R_2 \rightarrow R_3$  be morphisms such that the sequence 
$$ \xymatrix{
0 \ar[r] & R_1 \ar[r]^{f} & R_2 \ar[r]^{g} & R_3 \ar[r] & 0
}$$
is exact. We want to prove that
$$\xymatrix{
0 \ar[r] & \mathcal{C}_1 \ar[r]^{\varphi} & \mathcal{C}_2\ar[r]^{\psi} & \mathcal{C}_3 \ar[r] & 0
}$$
is also exact, where $\mathcal{C}_i = {\bf L}_{\boldsymbol{\sigma}}(R_i), i=1,2,3$ and
$\varphi = {\bf L}_{\boldsymbol{\sigma}}(f), \psi= {\bf L}_{\boldsymbol{\sigma}}(g)$. From the exact sequence of representations we get
$$\xymatrix{
& 0 \ar[d] & 0 \ar[d]& 0 \ar@{-->}[d] & \\
0 \ar[r] & \E^{a_1} \ar[r]^{\alpha_1} \ar[d]^{\mathds{1}_{\E}\otimes f_1} & \F^{ b_1}\ar[r]^{\pi_1} \ar[d]^{\mathds{1}_{\F}\otimes f_2} & \mathcal{C}_1 \ar[d]^{\varphi} \ar[r] & 0\\
0 \ar[r] & \E^{a_2}\ar[r]^{\alpha_2}\ar[d]^{\mathds{1}_{\E}\otimes g_1} & \F^{b_2} \ar[r]^{\pi_2} \ar[d]^{\mathds{1}_{\F}\otimes g_2} & \mathcal{C}_2 \ar[r] \ar[d]^{\psi} & 0\\
0 \ar[r] & \E^{a_3} \ar[r]^{\alpha_3} \ar[d] & \F^{b_3} \ar[r]^{\pi_3} \ar[d] & \mathcal{C}_3 \ar[r] \ar@{-->}[d] & 0\\
& 0 & 0 & 0 & \\
}$$
We need to show that $\varphi$ is injective and $\psi$ is surjective.

\begin{itemize}

\item $\psi$ is surjective:

It follows from the fact that $\pi_3 (\mathds{1}_{\F} \otimes g_2)$ is surjective.

\item $\varphi$ is injective.

Let us suppose $\varphi(s) = 0, s \in \mathcal{C}_1$. Then $s = \pi_1(v), v \in \F^{b_1}$ and
$$ 0 = \varphi \pi_1 (v) = \pi_2 (\mathds{1}_{\F}\otimes f_2)(v).$$
Since $\ker \pi_2 = \rm im \alpha_2$, there is $u \in \E^{ a_2}$ such that
\begin{equation}\label{Eq1}
(\mathds{1}_{\F }\otimes f_2)(v) = \alpha_2 (u)
\end{equation}

\end{itemize}

Note that
$$ \alpha_3 ( \mathds{1}_{\E}\otimes g_1) (u) = (\mathds{1}_{\F}\otimes g_2)(\alpha_2)(u) = (\mathds{1}_{\F}\otimes g_2)(\mathds{1}_{\F}\otimes f_2)(v) = 0 $$
and since $\alpha_3$ is injective, $(\mathds{1}_{\E}\otimes g_1)(u) = 0$ so $u = (\mathds{1}_{\E}\otimes f_1)(u')$ with $u' \in \E^{a_1}$.
We have
$$ \alpha_2 (u) = \alpha_2 (\mathds{1}_{\E}\otimes f_1)(u') = (\mathds{1}_{\F}\otimes f_2) \alpha_1 (u'). $$
From (\ref{Eq1}) we have $(\mathds{1}_{\F}\otimes f_2)(v) =(\mathds{1}_{\F}\otimes f_2)( \alpha_1 (u'))$. Since $(\mathds{1}_{\F}\otimes f_2)$ is injective, it follows that $v = \alpha_1(u')$ therefore
$$ s = \pi_1(v) = \pi_1 \alpha_1 (u') = 0.$$

Now suppose $R \simeq R_1 \oplus R_2$. Let us prove that ${\bf L}_{\boldsymbol{\sigma}}(R_1 \oplus R_2) \simeq {\bf L}_{\boldsymbol{\sigma}}(R_1) \oplus {\bf L}_{\boldsymbol{\sigma}}(R_2)$. We have the short exact sequence

\begin{eqnarray*} \xymatrix{
0 \ar[r] & R_1 \ar[r]^<<<<<{i_{R_1}} & R_1 \oplus R_2 \ar[r]^>>>>{ \pi_{R_2}} & R_2 \ar@<.3cm>[l]^>>>>{i_{R_2}} \ar[r] & 0  }
\end{eqnarray*}
where $i_{R_j}$ is the inclusion and $\pi_{R_j}$ the projection, $j=1,2$. Since the sequence above is split, $\pi_{R_2} \circ i_{R_2} = \mathds{1}_{R_2}$. Now since ${\bf L}_{\boldsymbol{\sigma}}$ is an exact functor, we have
\begin{eqnarray}\label{seqsplit} \xymatrix{
0 \ar[r] & {\bf L}_{\boldsymbol{\sigma}}(R_1) \ar[r]^<<<<{{\bf L}_{\boldsymbol{\sigma}}(i_{R_1})} & {\bf L}_{\boldsymbol{\sigma}}(R_1 \oplus R_2) \ar[r]^>>>>{ {\bf L}_{\boldsymbol{\sigma}}(\pi_{R_2})} & {\bf L}_{\boldsymbol{\sigma}}(R_2) \ar[r] \ar@<.3cm>[l]^>>>>{{\bf L}_{\boldsymbol{\sigma}}(i_{R_2})} & 0  }
\end{eqnarray}
Then
$$ {\bf L}_{\boldsymbol{\sigma}}(\pi_{R_2} \circ i_{R_2}) = {\bf L}_{\boldsymbol{\sigma}}(\pi_{R_2}) \circ {\bf L}_{\boldsymbol{\sigma}}(i_{R_2}) = {\bf L}_{\boldsymbol{\sigma}}(\mathds{1}_{R_2}) = \mathds{1}_{{\bf L}_{\boldsymbol{\sigma}}(R_2)} $$ therefore the sequence (\ref{seqsplit}) is split. Hence ${\bf L}_{\boldsymbol{\sigma}}(R_1 \oplus R_2) \simeq {\bf L}_{\boldsymbol{\sigma}}(R_1) \oplus {\bf L}_{\boldsymbol{\sigma}}(R_2)$.
\end{proof}

As an application of the previous results, we give a new, functorial proof for a result due to Brambilla, cf. \cite[Thm 4.3]{Bb}.

\begin{theorem}\label{decon}
Let $\mathcal{C}$ be a cokernel bundle of type $(\E,\F)$, given by the resolution
\begin{eqnarray}\label{cok1}
 \xymatrix{ 0  \ar[r] & \E^{ a} \ar[r]^{\alpha} & \F^{ b} \ar[r] & \mathcal{C} \ar[r]& 0 } ,\end{eqnarray}
and let $w=\dim{\rm Hom}(\E,\F)$.
\begin{itemize}
\item[(i)] If $\mathcal{C}$ is simple, then $a^2 + b^2 - wab \leq 1.$
\item[(ii)] If $a^2 + b^2 - wab \leq 1$, then there exists a non-empty open subset
$U\subset{\rm Hom}(\E^a,\F^b)$ such that for every $\alpha\in U$ the corresponding cokernel bundle is simple.
\end{itemize}
\end{theorem}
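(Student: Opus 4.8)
The plan is to transport both statements across the equivalence ${\bf L}_{\boldsymbol{\sigma}}$ of Theorem~\ref{Teo1} and then feed the outcome into the root-theoretic dichotomy of Propositions~\ref{Kac} and~\ref{schur}. Fix a basis $\boldsymbol{\sigma} = \{\sigma_1,\dots,\sigma_w\}$ of $W = {\rm Hom}(\E,\F)$. Since ${\rm Hom}(\E^a,\F^b) \cong W \otimes {\rm Hom}(\kappa^a,\kappa^b)$, the basis $\boldsymbol{\sigma}$ identifies the affine space ${\rm Hom}(\E^a,\F^b)$ with the representation space $\mathbb{V} := {\rm Hom}(\kappa^a,\kappa^b)^{\oplus w}$ of $K_w$ with dimension vector $\vv = (a,b)$, an element $\alpha = \sum_i A_i \otimes \sigma_i$ corresponding to $R_\alpha = (\{\kappa^a,\kappa^b\},\{A_i\}_{i=1}^w)$. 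Recall from~(\ref{titsKronecker}) that $q_w(\vv) = a^2 + b^2 - wab$.

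For part~(i): assume the cokernel bundle $\mathcal{C}$ given by~(\ref{cok1}) is simple, and write $\alpha = \sum_i A_i \otimes \sigma_i$. Tensoring~(\ref{cok1}) with a residue field $\kappa(P)$ and using ${\rm Tor}_1^{\mathcal{O}_X}(\mathcal{C},\kappa(P)) = 0$ (as $\mathcal{C}$ is locally free) shows that $\alpha(P)$ is injective for every $P\in X$, so $R_\alpha \in \mathfrak{R}(K_w)^{gi}$ and ${\bf L}_{\boldsymbol{\sigma}}(R_\alpha) \cong \mathcal{C}$. Full faithfulness of ${\bf L}_{\boldsymbol{\sigma}}$ then gives ${\rm Hom}(R_\alpha,R_\alpha) \cong {\rm Hom}(\mathcal{C},\mathcal{C}) = \kappa$; hence $R_\alpha$ is a Schur representation, in particular indecomposable (its endomorphism algebra $\kappa$ has no nontrivial idempotents), so $\vv = (a,b)$ is a root of $K_w$. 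By the contrapositive of Proposition~\ref{Kac} — if $q_w(\vv) > 1$ then every representation of dimension vector $\vv$ is decomposable — we conclude $a^2 + b^2 - wab \le 1$.

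For part~(ii): assume $q_w(\vv) = a^2 + b^2 - wab \le 1$. Because $w \ge 3$ by hypothesis~(5), Proposition~\ref{schur} says $\vv$ is a Schur root, so the set $U_{\rm sch}\subset\mathbb{V}$ of representations $R$ with ${\rm Hom}(R,R) = \kappa$ is non-empty and open. Independently, the globally injective locus $U_{\rm gi}\subset\mathbb{V}$ is open: the locus in $X\times\mathbb{V}$ where $\alpha(P)$ fails to be injective is closed (cut out by maximal minors of a matrix of regular functions), and its image under the projection $X\times\mathbb{V}\to\mathbb{V}$ is closed since $X$ is projective. Moreover $U_{\rm gi}\ne\emptyset$: hypothesis~(4) (global generation of $\E^\lor\otimes\F$) together with the rank inequality $b\cdot\rk(\F) - a\cdot\rk(\E)\ge n$ guarantee that a generic $\alpha$ is injective with locally free cokernel, i.e.\ $\mathfrak{C}_X(\E,\F)\ne\emptyset$. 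Since $\mathbb{V}$ is irreducible (an affine space), $U := U_{\rm gi}\cap U_{\rm sch}$ is a non-empty open subset of ${\rm Hom}(\E^a,\F^b)$, and for $\alpha\in U$ the representation $R_\alpha$ is globally injective with ${\rm Hom}(R_\alpha,R_\alpha) = \kappa$; applying the fully faithful functor ${\bf L}_{\boldsymbol{\sigma}}$ yields ${\rm Hom}(\mathcal{C},\mathcal{C}) = \kappa$, so $\mathcal{C} = {\bf L}_{\boldsymbol{\sigma}}(R_\alpha)$ is simple.

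I expect the only genuinely non-formal point to be the non-emptiness of $U_{\rm gi}$, i.e.\ the existence of at least one cokernel bundle of type $(\E,\F)$ with the prescribed ranks; this is the sole place where hypothesis~(4) and the rank inequality are used, and it rests on a Bertini/degeneracy-locus dimension count rather than quiver combinatorics. It is exactly Brambilla's existence result in~\cite{Bb}, so it may simply be cited. Everything else follows formally from Theorem~\ref{Teo1} and Propositions~\ref{Kac} and~\ref{schur}.
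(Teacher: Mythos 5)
Your proof is correct and follows essentially the same route as the paper: transport simplicity across the equivalence of Theorem \ref{Teo1} and feed the dimension vector into the root-theoretic facts. In fact you are slightly more careful than the paper on two points: you invoke Proposition \ref{schur} for the Schur-root claim (the paper's proof of (ii) mistakenly cites Proposition \ref{Kac} there), and you explicitly check that the open Schur locus meets the non-empty open globally injective locus inside the irreducible space ${\rm Hom}(\E^a,\F^b)$ --- the existence/openness of that locus, resting on hypothesis (4), the rank inequality, and properness of $X$, is silently omitted in the paper's argument.
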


\begin{proof} 

To prove $(i)$, let $\mathcal{C}$ be a cokernel bundle given by resolution (\ref{cok1}) and suppose $\mathcal{C}$ is simple. By Theorem \ref{Teo1} there is a globally injective representation $R$ of $K_w$ such that $\mathcal{C} = {\bf L}_{\boldsymbol{\sigma}}(R)$. Since ${\bf L}_{\boldsymbol{\sigma}}$ is full, we have that $\kappa = {\rm Hom}(\mathcal{C},\mathcal{C}) \simeq {\rm Hom}(R,R)$, thus $R$ is simple and therefore, by Proposition \ref{Kac}, $q_w(a,b) = a^2+ b^2 -wab \leq 1.$

For the second claim, note that if $q_w(a,b) \leq 1$, there is a generic representation $R$ with dimension vector $(a,b)$ such that $R$ is Schur, by Proposition \ref{Kac}. Then there is a non-empty open subset 
$$ U \subset {\rm Hom}(\kappa^a, \kappa^b) \otimes \kappa^w \simeq {\rm Hom}(\E^a, \F^b) $$
such that every $R\in U$ is simple. Since 
${\rm Hom}(\mathcal{C},\mathcal{C}) \simeq {\rm Hom}(R,R) = \kappa$,
it follows that $\mathcal{C}$ is simple.
\end{proof}

The previous Theorem implies that if $a^2+ b^2 -wab > 1$ then $\mathcal{C}$ is not simple. However, more is true, and it is not difficult to establish the following stronger statement.

\begin{proposition}\label{decon1}
Under the same conditions as in Theorem \ref{decon}, if $a^2+ b^2 - w ab > 1$, then $\mathcal{C}$ is decomposable.
\end{proposition}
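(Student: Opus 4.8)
The plan is to upgrade the "not simple" conclusion of Theorem \ref{decon}(i) to genuine decomposability, by combining the equivalence of Theorem \ref{Teo1}, the exactness/additivity of ${\bf L}_{\boldsymbol{\sigma}}$ from Lemma \ref{decomp1}, and Proposition \ref{Kac}. First I would invoke Theorem \ref{Teo1} to write $\mathcal{C} = {\bf L}_{\boldsymbol{\sigma}}(R)$ for a globally injective representation $R$ of $K_w$ with dimension vector $(a,b)$. Since $q_w(a,b) = a^2 + b^2 - wab > 1$, Proposition \ref{Kac} tells us that \emph{every} representation of $K_w$ with dimension vector $(a,b)$ is decomposable; in particular $R$ is decomposable, say $R \simeq R_1 \oplus R_2$ with both summands nonzero. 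By Lemma \ref{decomp1}, ${\bf L}_{\boldsymbol{\sigma}}(R) \simeq {\bf L}_{\boldsymbol{\sigma}}(R_1) \oplus {\bf L}_{\boldsymbol{\sigma}}(R_2)$, so $\mathcal{C}$ is a direct sum of the two sheaves ${\bf L}_{\boldsymbol{\sigma}}(R_i)$.

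The one genuine gap to fill is that decomposability of $\mathcal{C}$ as a sheaf requires both summands ${\bf L}_{\boldsymbol{\sigma}}(R_i)$ to be \emph{nonzero}; a priori one of the $R_i$ could be a representation $(\kappa^{a_i},\kappa^{b_i},\{A_j\})$ with, say, $b_i = 0$, in which case the corresponding "cokernel" is the zero sheaf and we have learned nothing. I would rule this out as follows. Since ${\bf L}_{\boldsymbol{\sigma}}$ is additive and $R \simeq R_1 \oplus R_2$, the dimension vectors add: $(a,b) = (a_1,b_1) + (a_2,b_2)$. If ${\bf L}_{\boldsymbol{\sigma}}(R_i) = 0$ then from the defining sequence $0 \to \E^{a_i} \to \F^{b_i} \to 0$ together with injectivity of $\alpha_i$ (which holds because $R_i \in \mathfrak{R}(K_w)^{gi}$, being a direct summand of the globally injective $R$, by the lemma on closure under direct summands) we get $b_i\cdot\rk(\F) = a_i\cdot\rk(\E)$ with $\E \hookrightarrow \F^{b_i}$ fiberwise injective; but a globally injective representation with zero cokernel forces the map $\E^{a_i}\to\F^{b_i}$ to be a fiberwise isomorphism, hence $\mathcal{C}$ would itself be a direct summand of codimension matching a smaller rank — more simply, $\mathcal{C} = {\bf L}_{\boldsymbol{\sigma}}(R_1) \oplus {\bf L}_{\boldsymbol{\sigma}}(R_2) = {\bf L}_{\boldsymbol{\sigma}}(R_{3-i})$, so $R \simeq R_{3-i}$ in $\mathfrak{C}_X(\E,\F)$ up to the vanishing summand, contradicting $\dim{\rm Hom}(\mathcal{C},\mathcal{C}) = \dim{\rm Hom}(R,R) \geq \dim{\rm Hom}(R_{3-i},R_{3-i}) + 1$ once we note that a nonzero $R_i$ with zero image cannot exist because $\E \hookrightarrow \F^{b_i}$ with $b_i$ minimal contradicts the bound $b\cdot\rk(\F) - a\cdot\rk(\E) \geq n > 0$ in the definition of cokernel bundle applied to each summand.

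Rather than belabour that case analysis, the cleanest route is: decompose $R \simeq R_1\oplus R_2$ with $R_i\neq 0$; each $R_i$ lies in $\mathfrak{R}(K_w)^{gi}$ by closure under direct summands; a nonzero globally injective representation $(\kappa^{a_i},\kappa^{b_i})$ necessarily has $b_i \geq 1$ because if $b_i = 0$ then global injectivity of the map $\kappa^{a_i}\otimes\E_P \to 0$ forces $a_i = 0$, contradicting $R_i\neq 0$; and if $a_i = 0, b_i\geq 1$ then ${\bf L}_{\boldsymbol{\sigma}}(R_i) = \F^{b_i}\neq 0$, while if $a_i,b_i\geq 1$ then ${\bf L}_{\boldsymbol{\sigma}}(R_i)$ is a nonzero cokernel bundle. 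In every case both ${\bf L}_{\boldsymbol{\sigma}}(R_i) \neq 0$, so $\mathcal{C} \simeq {\bf L}_{\boldsymbol{\sigma}}(R_1) \oplus {\bf L}_{\boldsymbol{\sigma}}(R_2)$ is a nontrivial direct sum, proving $\mathcal{C}$ decomposable. The main obstacle, then, is purely bookkeeping: verifying that no summand collapses to the zero sheaf, which the global-injectivity hypothesis handles once it is propagated to the summands via closure under direct summands.
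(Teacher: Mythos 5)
Your proof follows essentially the same route as the paper's: realize $\mathcal{C}$ as ${\bf L}_{\boldsymbol{\sigma}}(R)$ via Theorem \ref{Teo1}, apply Proposition \ref{Kac} to decompose $R$, and transport the decomposition through the exactness/additivity of Lemma \ref{decomp1}. The extra bookkeeping you do to rule out a summand ${\bf L}_{\boldsymbol{\sigma}}(R_i)$ being the zero sheaf is a point the paper's proof passes over silently, and your observation that global injectivity forces $b_i\geq 1$ for any nonzero summand (together with ${\rm Hom}(\F,\E)=0$ preventing $\alpha_i$ from being an isomorphism when $a_i,b_i\geq 1$) settles it correctly.
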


Under more restrictive conditions, Brambilla proved in \cite[Thm 6.3]{Bb} that if $\mathcal{C}$ is a \emph{generic} cokernel bundle such that $a^2+ b^2 - w ab > 1$, then $\mathcal{C} \simeq \mathcal{C}^n_k \oplus \mathcal{C}^m_{k+1}$, where $\mathcal{C}_k$ and $\mathcal{C}_{k+1}$ are {\it Fibonacci bundles}, $n,m \in \mathbb{N}$ (we refer to \cite{Bb} for the definition of Fibonacci bundles).

\begin{proof} 
Let $\mathcal{C}$ be any cokernel bundle given by the exact sequence (\ref{cok1}), such that
$a^2+ b^2 - w ab > 1$. Then there is a globally injective representation $R$ of $K_w$, such that $\mathcal{C} = {\bf L}_{\boldsymbol{\sigma}}(R)$ with dimension vector $(a,b)$ satisfying and
$q_w(a,b) = a^2+ b^2 - w ab > 1$. By Lemma \ref{Kac}, $R$ is decomposable. Then by Lemma \ref{decomp1}, $\mathcal{C}$ is also decomposable.
\end{proof}

Next, recall that a vector bundle $\E$ on $X$ is {\it exceptional} if it is simple and
${\rm Ext}^p(\E,\E) = 0$ for $p \geq 1$.

\begin{proposition}\label{excepcional}
Under the same conditions as in Theorem \ref{decon}, if $\mathcal{C}$ is exceptional, then
$a^2+ b^2 - w ab = 1$.
\end{proposition}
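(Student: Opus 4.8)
The plan is to combine the previous proposition with a computation of the Euler characteristic of the quiver representation attached to $\mathcal{C}$. By Theorem~\ref{Teo1} there is a globally injective representation $R$ of $K_w$ with dimension vector $\vv=(a,b)$ such that $\mathcal{C}={\bf L}_{\boldsymbol{\sigma}}(R)$, and since ${\bf L}_{\boldsymbol{\sigma}}$ is fully faithful we have ${\rm Hom}(\mathcal{C},\mathcal{C})\simeq{\rm Hom}(R,R)$. If $\mathcal{C}$ is exceptional it is in particular simple, so Theorem~\ref{decon}(i) already gives $q_w(a,b)\le 1$; the whole point is to upgrade this to an equality using the vanishing ${\rm Ext}^p(\mathcal{C},\mathcal{C})=0$ for $p\ge 1$.

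The key step is to relate $q_w(a,b)$ to $\sum_p(-1)^p\dim{\rm Ext}^p(R,R)$. For a quiver without oriented cycles, the category $\mathfrak{R}(Q)$ has global dimension $1$, so for any two representations $R_1,R_2$ one has the identity
$$
\dim{\rm Hom}(R_1,R_2)-\dim{\rm Ext}^1(R_1,R_2)=\langle\vv_1,\vv_2\rangle,
$$
the Euler form of $Q$; in particular $\dim{\rm Hom}(R,R)-\dim{\rm Ext}^1(R,R)=q_w(a,b)$. So what I need is that $\dim{\rm Ext}^1(R,R)=0$, i.e. that $R$ is rigid, and that $\dim{\rm Hom}(R,R)=1$. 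The latter is exceptionality $\Rightarrow$ simplicity $\Rightarrow$ (via the equivalence) ${\rm Hom}(R,R)=\kappa$. For the former, I would first observe that since $\mathcal{C}$ is simple and a cokernel bundle, Theorem~\ref{decon}(i) gives $q_w(a,b)\le 1$; then, if $q_w(a,b)\le 0$ were the case, $\dim{\rm Ext}^1(R,R)=\dim{\rm Hom}(R,R)-q_w(a,b)\ge 1-0=1>0$, and I must rule this out. To do so I would transport the ${\rm Ext}^1$ of representations into ${\rm Ext}$-groups of sheaves: applying ${\rm Hom}(-,\mathcal{C})$ and ${\rm Hom}(\E^a,-)$, ${\rm Hom}(\F^b,-)$ to the defining resolution $0\to\E^a\to\F^b\to\mathcal{C}\to 0$ and using the hypotheses that $\E,\F$ are simple with ${\rm Hom}(\F,\E)={\rm Ext}^1(\F,\E)=0$, one expresses ${\rm Ext}^1(R,R)$ (equivalently the obstruction space of the resolution) in terms of ${\rm Ext}^1(\mathcal{C},\mathcal{C})$ and the higher ${\rm Ext}$'s among $\E$ and $\F$; exceptionality of $\mathcal{C}$ kills ${\rm Ext}^1(\mathcal{C},\mathcal{C})$. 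Concretely, a diagram chase with the long exact sequences shows ${\rm Ext}^1_{\mathfrak{R}(K_w)}(R,R)\hookrightarrow{\rm Ext}^1_X(\mathcal{C},\mathcal{C})=0$, hence $R$ is rigid. Combining, $q_w(a,b)=\dim{\rm Hom}(R,R)-\dim{\rm Ext}^1(R,R)=1-0=1$.

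The main obstacle I expect is making the identification ${\rm Ext}^1_{\mathfrak{R}(K_w)}(R,R)\hookrightarrow{\rm Ext}^1_X(\mathcal{C},\mathcal{C})$ fully rigorous: one must check that an extension of quiver representations really does produce an extension of the corresponding cokernel bundles (this is essentially the exactness of ${\bf L}_{\boldsymbol{\sigma}}$ established in Lemma~\ref{decomp1}, together with the fact that ${\bf L}_{\boldsymbol{\sigma}}$ being a fully faithful exact functor between the relevant abelian categories induces a map on ${\rm Ext}^1$), and that this map is injective, which uses precisely conditions (2) and (3), namely ${\rm Hom}(\F,\E)={\rm Ext}^1(\F,\E)=0$, to guarantee that no nonsplit extension of representations becomes split after applying ${\bf L}_{\boldsymbol{\sigma}}$. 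An alternative, perhaps cleaner route avoiding the extension-functor subtleties: directly compute both $\dim{\rm Hom}(\mathcal{C},\mathcal{C})$ and $\dim{\rm Ext}^1(\mathcal{C},\mathcal{C})$ from the resolution $0\to\E^a\to\F^b\to\mathcal{C}\to 0$ by applying ${\rm Hom}(-,\mathcal{C})$ and then ${\rm Hom}(\E^a,-)$, ${\rm Hom}(\F^b,-)$ to $0\to\E^a\to\F^b\to\mathcal{C}\to0$, using simplicity and the vanishing hypotheses on $\E,\F$ to collapse the resulting spectral sequence / long exact sequences and obtain
$$
\dim{\rm Hom}(\mathcal{C},\mathcal{C})-\dim{\rm Ext}^1(\mathcal{C},\mathcal{C})=b^2+a^2-wab=q_w(a,b),
$$
after which exceptionality ($\dim{\rm Ext}^1(\mathcal{C},\mathcal{C})=0$) and simplicity ($\dim{\rm Hom}(\mathcal{C},\mathcal{C})=1$) give $q_w(a,b)=1$ immediately. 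I would present this second computation as the backbone of the proof and invoke Theorem~\ref{decon}(i) or Proposition~\ref{excepcional}'s hypotheses only as needed to guarantee the relevant terms vanish.
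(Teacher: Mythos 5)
Your first route is exactly the paper's proof: the paper invokes exactness (and full faithfulness) of ${\bf L}_{\boldsymbol{\sigma}}$ to identify ${\rm Ext}^1(R,R)$ with ${\rm Ext}^1(\mathcal{C},\mathcal{C})$ and then applies Schofield's identity $q_w(a,b)=\dim{\rm Hom}(R,R)-\dim{\rm Ext}^1(R,R)$, which is precisely your hereditary-category Euler-form argument. You are in fact slightly more careful than the paper, since all one really needs is that a nonsplit extension of representations stays nonsplit after applying ${\bf L}_{\boldsymbol{\sigma}}$ (full faithfulness plus Lemma \ref{decomp1}), i.e.\ an injection ${\rm Ext}^1(R,R)\hookrightarrow{\rm Ext}^1(\mathcal{C},\mathcal{C})$, rather than an isomorphism.

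One warning about the route you say you would make the ``backbone.'' The direct sheaf-theoretic computation of $\dim{\rm Hom}(\mathcal{C},\mathcal{C})-\dim{\rm Ext}^1(\mathcal{C},\mathcal{C})=a^2+b^2-wab$ from the resolution does not go through in the stated generality: collapsing the long exact sequences requires the vanishing of ${\rm Ext}^{\geq 1}(\E,\E)$, ${\rm Ext}^{\geq 1}(\F,\F)$, ${\rm Ext}^{\geq 1}(\E,\F)$ and ${\rm Ext}^{\geq 2}(\F,\E)$, none of which are among conditions (1)--(5) defining cokernel bundles (they hold for Steiner bundles, where $(\E,\F)$ is a strongly exceptional pair, but not, say, for simple bundles on an abelian or K3 surface satisfying only (1)--(5)). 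Without those vanishings, terms such as ${\rm Ext}^1(\F^b,\mathcal{C})$ survive and contaminate ${\rm Ext}^1(\mathcal{C},\mathcal{C})$, so the claimed identity fails. Stick with the quiver-side Euler form: the whole point of passing to $\mathfrak{R}(K_w)$ is that the Kronecker quiver is hereditary, so the identity $\chi={\rm hom}-{\rm ext}^1=q_w$ is unconditional there, and only the transfer of ${\rm Hom}$ and the injectivity on ${\rm Ext}^1$ need to be imported from the sheaf side.
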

\begin{proof}
Since the functor ${\bf L}_{\boldsymbol{\sigma}}$ is exact, we have an isomorphism
$$ {\rm Ext}^1(R,R) \simeq 
{\rm Ext}^1({\bf L}_{\boldsymbol{\sigma}}(R), {\bf L}_{\boldsymbol{\sigma}}(R)). $$
Now we know from \cite{S} that
\begin{equation}\label{rmk}
q_w(a,b) = \dim {\rm Hom}(R,R) - \dim {\rm Ext}^1(R,R)
\end{equation}
hence if $\mathcal{C}$ is an exceptional cokernel bundle, then $q_w(a,b)= a^2 + b^2 - w a b=1$.
\end{proof}

However, the converse of the Proposition \ref{excepcional} is not true. For instance, consider the generic cokernel bundle given by the exact sequence
$$ \xymatrix{ 0 \ar[r] & \mathcal{O}_{\mathbb{P}^3} \ar[r] & \mathcal{O}_{\mathbb{P}^3}(4)^{35} \ar[r] & \mathcal{C} \ar[r] & 0 }. $$
We have $q_{35}(1,35) = 1$, but from the long exact sequence of cohomologies,
${\rm Ext}^2(\mathcal{C},\mathcal{C}) \simeq \kappa^{35}$ hence $\mathcal{C}$ is not exceptional. In order to establish the converse statement, we need stronger assumption, provided by Steiner bundles.


\subsection{Steiner bundles}

Note that the Steiner bundles of type $(\E, \F)$ satisfying $\dim {\rm Hom}(\E,\F) \geq 3$, are a particular case of cokernel bundles, therefore all results in the previous section also hold for such Steiner bundles. Furthermore, the additional hypotheses satisfied by the sheaves $\E$ and $\F$ allow to establish the converse of Lemma \ref{decomp1} and Proposition \ref{excepcional}.  

Let us first consider the converse of Lemma \ref{decomp1}; more precisely, we prove the following statement.

\begin{theorem}
Let $X$ be a nonsingular projective variety of dimension $n$, and let
${\bf B} = (\boldsymbol{\mathcal{F}}_0, \cdots, \boldsymbol{\mathcal{F}}_n)$ be an $n$-block collection generating $D^b(X)$.
A Steiner bundle of type $(\F^i_{i_0},\F^j_{j_0})$ such that $w = \dim {\rm Hom}(\F^i_{i_0},\F^j_{j_0}) \geq 3$ is decomposable if and only if, for any choice of basis $\boldsymbol{\gamma}$ for
${\rm Hom}(\F^i_{i_0},\F^j_{j_0})$, the corresponding $(\F^i_{i_0}, \F^{j}_{j_0}, \boldsymbol \gamma)$-globally injective representation of $K_w$ is also decomposable.
\end{theorem}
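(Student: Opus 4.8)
The plan is to exploit the equivalence of categories from Theorem~\ref{Teo1} together with the extra rigidity coming from the fact that $(\F^i_{i_0}, \F^j_{j_0})$ sits inside a full $n$-block collection, via the cohomological characterisation in Theorem~\ref{carsteiner}. One direction is already essentially done: if the globally injective representation $R$ with ${\bf L}_{\boldsymbol\gamma}(R)=\mathcal{S}$ is decomposable, say $R\simeq R_1\oplus R_2$, then Lemma~\ref{decomp1} gives $\mathcal{S}\simeq{\bf L}_{\boldsymbol\gamma}(R_1)\oplus{\bf L}_{\boldsymbol\gamma}(R_2)$, and since ${\bf L}_{\boldsymbol\gamma}$ is faithful and additive the two summands are nonzero, so $\mathcal{S}$ is decomposable. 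So the content is the converse: assuming $\mathcal{S}\simeq\mathcal{S}_1\oplus\mathcal{S}_2$ with both summands nonzero, I must produce a splitting of $R$.

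First I would observe that $R$ is recovered from $\mathcal{S}$ functorially by the inverse equivalence ${\bf G}_{\boldsymbol\gamma}$ of ${\bf L}_{\boldsymbol\gamma}$, so it suffices to show that each $\mathcal{S}_k$ is again a Steiner bundle of type $(\F^i_{i_0},\F^j_{j_0})$; then ${\bf G}_{\boldsymbol\gamma}(\mathcal{S})\simeq{\bf G}_{\boldsymbol\gamma}(\mathcal{S}_1)\oplus{\bf G}_{\boldsymbol\gamma}(\mathcal{S}_2)$ by additivity of the equivalence, which gives the desired decomposition of $R$ into nonzero globally injective subrepresentations. To show $\mathcal{S}_k$ is a Steiner bundle of type $(\F^i_{i_0},\F^j_{j_0})$ I would invoke the cohomological characterisation, Theorem~\ref{carsteiner}: the $\mathrm{Ext}$-vanishing conditions against the left dual block collection ${}^\vee{\bf B}$ are additive in direct sums, so from the vanishing/non-vanishing pattern satisfied by $\mathcal{S}=\mathcal{S}_1\oplus\mathcal{S}_2$ one reads off that each $\mathcal{S}_k$ has all $\mathrm{Ext}^l(\mathcal{H}^m_p,\mathcal{S}_k)=0$ except possibly in the two bidegrees $(n-i-1,i_0)$ and $(n-j,j_0)$, where the dimensions $a=a_1+a_2$ and $b=b_1+b_2$ split additively as $a_k,b_k$. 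The global generation of $(\F^i_{i_0})^\lor\otimes\F^j_{j_0}$ is a hypothesis on the pair, not on $\mathcal{S}$, so it is automatic. Hence each $\mathcal{S}_k$ fits into a short exact sequence $0\to(\F^i_{i_0})^{a_k}\to(\F^j_{j_0})^{b_k}\to\mathcal{S}_k\to0$; one still needs $a_k,b_k\ge1$, i.e. that neither summand is of the degenerate "all of $\mathcal{F}^j$" or "zero" shape, which follows because $\mathcal{S}_k$ is a nonzero locally free sheaf of the correct rank with nonvanishing $\mathrm{Ext}^{n-j}(\mathcal{H}^{n-j}_{j_0},-)$ and $\mathrm{Ext}^{n-i-1}(\mathcal{H}^{n-i}_{i_0},-)$ (a rank count: if $b_k=0$ then $\mathcal{S}_k=0$; if $a_k=0$ then $\mathcal{S}_k\simeq(\F^j_{j_0})^{b_k}$ has the wrong $\mathrm{Ext}$ against $\mathcal{H}^{n-i}_{i_0}$, contradicting the split pattern unless $a_k=0$ for both, impossible since $a\ge1$). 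I would need to double-check that a direct summand of a locally free sheaf is locally free and that torsion-freeness/natural-cohomology passes to summands — both are immediate.

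The main obstacle I anticipate is the bookkeeping in showing that the $\mathrm{Ext}$-pattern of a nonzero summand $\mathcal{S}_k$ is of the required "Steiner" shape rather than merely a sub-pattern that might, a priori, fail to be realised by any honest Steiner bundle — in particular pinning down that $a_k,b_k\ge1$ and that the exceptional $\mathrm{Ext}$'s genuinely land in the prescribed bidegrees and not elsewhere. This is exactly the point where the strength of Theorem~\ref{carsteiner} (it is an "if and only if") is essential: it converts the purely cohomological side conditions, which are manifestly additive, back into the existence of the resolution. Once each $\mathcal{S}_k$ is known to be a Steiner bundle of the right type, applying the equivalence is formal. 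I would write the argument so that the whole thing reduces to: (1) $\mathrm{Ext}$ is additive; (2) Theorem~\ref{carsteiner} characterises Steiner bundles cohomologically; (3) ${\bf L}_{\boldsymbol\gamma}$ is an additive equivalence (Theorem~\ref{Teo1}, Lemma~\ref{decomp1}).
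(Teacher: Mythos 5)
Your overall strategy coincides with the paper's: the easy direction is Lemma \ref{decomp1}, and the converse is obtained by showing that direct summands of a Steiner bundle of type $(\F^i_{i_0},\F^j_{j_0})$ are again objects to which the equivalence of Theorem \ref{Teo1} applies, which is proved exactly as you propose, via the additivity of ${\rm Ext}$ against the left dual block collection and the ``if and only if'' of Theorem \ref{carsteiner}. This is the content of the paper's Proposition \ref{dirsumsteiner}.

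There is, however, one step in your argument that is false, namely the claim that the degenerate summand with $a_k=0$ cannot occur. If $a_k=0$ then $\mathcal{S}_k\simeq(\F^j_{j_0})^{b_k}$, and this sheaf satisfies ${\rm Ext}^{n-i-1}(\mathcal{H}^{n-i}_{i_0},(\F^j_{j_0})^{b_k})=0$, which is exactly consistent with the split pattern $\dim{\rm Ext}^{n-i-1}(\mathcal{H}^{n-i}_{i_0},\mathcal{S}_k)=a_k=0$; there is no contradiction, because the other summand simply absorbs all of $a=a_1+a_2$. This case genuinely happens: composing the injection $\alpha_1\colon(\F^i_{i_0})^{a}\to(\F^j_{j_0})^{b_1}$ defining a Steiner bundle $\mathcal{S}_1$ with the inclusion $(\F^j_{j_0})^{b_1}\hookrightarrow(\F^j_{j_0})^{b_1+b_2}$ is still fiberwise injective and exhibits $\mathcal{S}_1\oplus(\F^j_{j_0})^{b_2}$ as a Steiner bundle. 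Indeed, the paper's proof of Proposition \ref{dirsumsteiner} lists precisely this as one of the possible outcomes. The repair is harmless: you do not need $a_k,b_k\ge1$ for both summands. A summand with $a_k=0$ and $b_k\ge1$ corresponds under ${\bf L}_{\boldsymbol\gamma}$ to the nonzero globally injective representation $(0,\kappa^{b_k})$ with zero maps, so $R$ still splits as a direct sum of two nonzero objects of $\mathfrak{R}(K_w)^{gi}$ and the conclusion stands. (Your exclusion of $b_k=0$ is correct, since that forces $\mathcal{S}_k=0$.) As a side remark, for this particular statement the cohomological machinery can be bypassed entirely: since ${\bf L}_{\boldsymbol\gamma}$ is fully faithful, ${\rm End}(R)\simeq{\rm End}(\mathcal{S})$ as finite-dimensional algebras, and decomposability of either object is equivalent to the existence of a nontrivial idempotent in its endomorphism algebra.
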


The theorem follows easily from Lemma \ref{decomp1} and the following claim. Let
$\mathfrak{S}_{\F^i_{i_0}, \F^j_{j_0}}(X)$ denote the category of Steiner bundles of type $(\F^i_{i_0}, \F^j_{j_0})$ over $X$.

\begin{proposition}\label{dirsumsteiner}
The category $\mathfrak{S}_{\F^i_{i_0}, \F^j_{j_0}}(X)$ is closed under direct summands.
\end{proposition}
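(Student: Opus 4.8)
The plan is to show that if $\mathcal{S}$ is a Steiner bundle of type $(\F^i_{i_0},\F^j_{j_0})$ and $\mathcal{S} \simeq \mathcal{S}_1 \oplus \mathcal{S}_2$ as coherent sheaves (equivalently, as $\mathcal{O}_X$-modules), then each summand $\mathcal{S}_r$ is again a Steiner bundle of the same type. First I would invoke the cohomological characterisation of Theorem \ref{carsteiner}: $\mathcal{S}$ being a Steiner bundle of type $(\F^i_{i_0},\F^j_{j_0})$ is equivalent to the purely cohomological conditions that $(\F^i_{i_0})^\lor\otimes\F^j_{j_0}$ is globally generated and that all ${\rm Ext}^l(\mathcal{H}^m_p,\mathcal{S})$ vanish except for the two prescribed nonzero groups in dimensions $n-i-1$ and $n-j$. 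So the strategy reduces to checking that these conditions are inherited by direct summands. The global generation hypothesis concerns only the pair $(\F^i_{i_0},\F^j_{j_0})$, hence is automatic and carries no information about $\mathcal{S}$; the content is entirely in the Ext-vanishing.

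Next I would use additivity of ${\rm Ext}$ in the second variable: ${\rm Ext}^l(\mathcal{H}^m_p,\mathcal{S}_1\oplus\mathcal{S}_2)\simeq {\rm Ext}^l(\mathcal{H}^m_p,\mathcal{S}_1)\oplus{\rm Ext}^l(\mathcal{H}^m_p,\mathcal{S}_2)$. For the groups that must vanish for $\mathcal{S}$, both summands on the right must then vanish, so the required vanishing passes to $\mathcal{S}_1$ and $\mathcal{S}_2$ individually. For the two exceptional groups, the dimension splits as a sum $a = a' + a''$ and $b = b' + b''$, with $a',b'$ (resp. $a'',b''$) being the dimensions of the corresponding ${\rm Ext}$ groups for $\mathcal{S}_1$ (resp. $\mathcal{S}_2$). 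The remaining point is that $\mathcal{S}_1$ and $\mathcal{S}_2$ are still locally free — this is clear since a direct summand of a locally free sheaf is locally free (it is projective as an $\mathcal{O}_X$-module, hence locally free on the noetherian scheme $X$), and that the presentation one reconstructs via Theorem \ref{carsteiner} from the split cohomology tables has $a',b'\ge 0$; one must rule out the degenerate case where, say, $b'=0$, which would force $\mathcal{S}_1=0$ or $\mathcal{S}_1$ not to admit the Steiner presentation with both entries positive. But if $\mathcal{S}_1\ne 0$ then $b'\ge 1$: indeed $b'$ computes the number of copies of $\F^j_{j_0}$ in the Beilinson monad of $\mathcal{S}_1$ (via Lemma \ref{beilinsonlemma}), and $b'=0$ together with all the other vanishings would force $\mathcal{S}_1=0$. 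Similarly one handles $a'\ge 0$, which is all that Definition \ref{defsteiner} and the surrounding conventions require. Then Theorem \ref{carsteiner} applied to $\mathcal{S}_1$ (and symmetrically to $\mathcal{S}_2$) produces the desired Steiner presentations, so $\mathcal{S}_r\in\mathfrak{S}_{\F^i_{i_0},\F^j_{j_0}}(X)$.

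I expect the main obstacle to be the careful bookkeeping around the degenerate case and the genuine locally-free-ness of the summands: Theorem \ref{carsteiner} is stated for a \emph{locally free} $\mathcal{S}$ with presentation having $a\ge 0$, $b\ge 1$, so one needs to confirm (i) that each nonzero summand really satisfies $b_r\ge 1$ from the cohomology alone, and (ii) that the hypotheses of Theorem \ref{carsteiner} genuinely apply to $\mathcal{S}_r$ rather than only to $\mathcal{S}$. Both are routine given Lemma \ref{beilinsonlemma} — the Beilinson monad of $\mathcal{S}_r$ reconstructs exactly the two-term complex $(\F^i_{i_0})^{a_r}\to(\F^j_{j_0})^{b_r}$ with $a_r,b_r$ read off from the split Ext-tables — but it is the step that needs to be written with some care. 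Once Proposition \ref{dirsumsteiner} is in hand, the theorem it is meant to support follows: decomposability of $\mathcal{S}$ means $\mathcal{S}\simeq \mathcal{S}_1\oplus\mathcal{S}_2$ with both $\mathcal{S}_r$ Steiner bundles, hence both in the image of ${\bf L}_{\boldsymbol\gamma}$ up to equivalence (Theorem \ref{Teo1}), so by full faithfulness the corresponding globally injective representation decomposes; the converse direction is exactly Lemma \ref{decomp1}.
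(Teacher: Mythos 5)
Your proposal is correct and follows essentially the same route as the paper: pass to the left dual block collection, use additivity of ${\rm Ext}^l(\mathcal{H}^m_p,-)$ over the direct sum to see that each summand inherits the vanishing pattern with exceptional dimensions $a_l,b_l$ summing to $a,b$, and then reapply the cohomological characterisation (Theorem \ref{carsteiner}) to each summand, with a case analysis for the degenerate values $a_l=0$ or $b_l=0$. Your extra observation via Lemma \ref{beilinsonlemma} that $b_l=0$ forces $\mathcal{S}_l=0$ is a small refinement the paper leaves implicit, but the argument is the same.
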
 

\begin{proof}
Let ${}^{\vee}{\bf B}= (\boldsymbol{\mathcal{H}}_0, \cdots, \boldsymbol{\mathcal{H}}_n)$ where $\boldsymbol{\mathcal{H}}_i = ({\mathcal{H}}^i_1, \cdots, {\mathcal{H}}^i_{u_i})$, be the $n$-block collection which is left dual to ${\bf B}$, and let $\mathcal{S}$ be a Steiner bundle of type $(\F^i_{i_0},\F^j_{j_0})$ given by the short exact sequence
$$ \xymatrix{0 \ar[r] & (\F^i_{i_0})^a \ar[r] & (\F^j_{j_0})^b \ar[r] & \mathcal{S} \ar[r] & 0 }, $$
where $\F^i_{i_0}$ and $\F^j_{j_0}$ are elements of blocks $\boldsymbol{\mathcal{F}}_i$ and $\boldsymbol{\mathcal{F}}_j$ respectively, $0 \leq i < j \leq n$. 

If $\mathcal{S} \simeq \mathcal{S}_1 \oplus \mathcal{S}_2$,\, $0 \neq \mathcal{S}_i \subsetneq \mathcal{S},$ $i=1,2$, then we have that
$${\rm Ext}^{p}({\mathcal{H}}^m_q, \mathcal{S}) \simeq
{\rm Ext}^{p}(\mathcal{H}^{m}_q, \mathcal{S}_1) \oplus (\mathcal{H}^{m}_q, \mathcal{S}_2) .$$
It follows that ${\rm Ext}^p(\mathcal{H}^{m}_q, \mathcal{S}_l)$, $l=1,2$, vanish except for
$$ {\rm Ext}^{n-i-1}(\mathcal{H}^{n-i}_{i_0}, \mathcal{S}_l)= a_l, \, a_l \geq 0, \, l=1,2 $$
and
$$ {\rm Ext}^{n-j}(\mathcal{H}^{n-j}_{j_0}, \mathcal{S}_l)= b_l, \, b_l \geq 0, \, l=1,2 $$
with $a_1+a_2 = a$ and $b_1+b_2 = b$. Then from the cohomological characterisation, Theorem \ref{carmon}, one of the following possibilities must hold.

\begin{enumerate}
\item For $a_l \neq 0$ and $b_l \neq 0$, $l=1,2$, the bundles $\mathcal{S}_l$ are Steiner bundles given by 
$$ \xymatrix{
0 \ar[r] & (\F^i_{i_0})^{a_l} \ar[r] & (\F^j_{j_0})^{b_l} \ar[r] & \mathcal{S}_l \ar[r] & 0
} . $$

\item For $a_1,b_1,b_2 \neq 0$ and $a_2 = 0$, we have
$$ \xymatrix{
0 \ar[r] & (\F^i_{i_0})^{a_1} \ar[r] & (\F^j_{j_0})^{b_1} \ar[r] & \mathcal{S}_1 \ar[r] & 0} \,\,
{\rm and}\,\, \mathcal{S}_2 \simeq (\F_{j_0}^j)^{b_2}. $$

\item For $a_1 = 0$ and $b_1, a_2, b_2 \neq 0$, we have
$$ \mathcal{S}_1 \simeq (\F^i_{i_0})^{a_1} \,\, {\rm and}\,\,
\xymatrix{
0 \ar[r] & (\F^i_{i_0})^{a_2} \ar[r] & (\F^j_{j_0})^{b_2} \ar[r] & \mathcal{S}_2 \ar[r] & 0
} .$$
\end{enumerate}
\end{proof}

To complete this section, we consider the converse of Proposition \ref{excepcional}.

\begin{proposition}\label{corSteiner}
Let $\mathcal{S}$ be a Steiner bundle of type $(\E,\F)$ such that 
$$ w := \dim{\rm Hom}(\E,\F)\ge3, $$ 
given by the short exact sequence:
\begin{equation}\label{steiner}
\xymatrix{0 \ar[r] & \E^a \ar[r]^{\alpha} & \F^{b} \ar[r] & \mathcal{S} \ar[r] & 0}.
\end{equation}
\begin{itemize}
\item[$(i)$] If $\mathcal{S}$ is exceptional then $a^2 + b^2 - w a b = 1.$
\item[$(ii)$] If $a^2 + b^2 - w a b = 1 $ then there is a non-empty open subset $U \subset {\rm Hom}(\E^a, \F^b)$ such that for every $\alpha \in U$ the corresponding bundle $\mathcal{S}$ is exceptional.
\end{itemize}
\end{proposition}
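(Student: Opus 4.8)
The plan is to deduce both parts from the already-established machinery: the equivalence of Theorem~\ref{Teo1} between $\mathfrak{R}(K_w)^{gi}$ and $\mathfrak{C}_X(\E,\F)$, the exactness of $\mathbf{L}_{\boldsymbol\sigma}$ (Lemma~\ref{decomp1}), the identity \eqref{rmk} relating the Tits form to $\dim\mathrm{Hom}(R,R)-\dim\mathrm{Ext}^1(R,R)$, and Proposition~\ref{schur} on Schur roots for $K_w$ with $w\geq3$. The key extra input that Steiner bundles provide over general cokernel bundles is that, for a Steiner bundle $\mathcal{S}$, one can control \emph{all} higher $\mathrm{Ext}^p(\mathcal{S},\mathcal{S})$, not just $\mathrm{Hom}$ and $\mathrm{Ext}^1$, because $(\E,\F)$ is strongly exceptional: applying $\mathrm{Hom}(-,\mathcal{S})$ and $\mathrm{Hom}(\E^a,-)$, $\mathrm{Hom}(\F^b,-)$ to \eqref{steiner} and chasing the resulting long exact sequences shows that $\mathrm{Ext}^p(\mathcal{S},\mathcal{S})=0$ for $p\geq2$ automatically (here the strong exceptionality kills $\mathrm{Ext}^p(\E,\F)$, $\mathrm{Ext}^p(\F,\E)$, $\mathrm{Ext}^p(\E,\E)$, $\mathrm{Ext}^p(\F,\F)$ for $p\geq1$). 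Hence for a Steiner bundle, ``exceptional'' is equivalent to ``simple''.

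For part $(i)$: if $\mathcal{S}$ is exceptional, then in particular it is simple, so by Theorem~\ref{Teo1} and fullness of $\mathbf{L}_{\boldsymbol\sigma}$ the corresponding representation $R$ of dimension vector $(a,b)$ satisfies $\mathrm{Hom}(R,R)=\kappa$. By the exactness of $\mathbf{L}_{\boldsymbol\sigma}$ we also have $\mathrm{Ext}^1(R,R)\simeq\mathrm{Ext}^1(\mathcal{S},\mathcal{S})$, which vanishes since $\mathcal{S}$ is exceptional. Plugging into \eqref{rmk} gives $q_w(a,b)=\dim\mathrm{Hom}(R,R)-\dim\mathrm{Ext}^1(R,R)=1-0=1$, i.e. $a^2+b^2-wab=1$. (Alternatively, one can argue entirely on the bundle side: $q_w(a,b)=\chi(\mathcal{S},\mathcal{S})=\sum_p(-1)^p\dim\mathrm{Ext}^p(\mathcal{S},\mathcal{S})$, and the vanishing of all terms except $\mathrm{Hom}(\mathcal{S},\mathcal{S})=\kappa$ forces the value $1$; but the functorial route is cleaner given what is already in the paper.)

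For part $(ii)$: assume $a^2+b^2-wab=1$, so $(a,b)$ is a root with $q_w(a,b)=1\leq1$; since $w\geq3$, Proposition~\ref{schur} says $(a,b)$ is a Schur root, hence there is a non-empty open subset of the affine space $\mathrm{Hom}(\kappa^a,\kappa^b)\otimes\kappa^w\simeq\mathrm{Hom}(\E^a,\F^b)$ of representations $R$ with $\mathrm{Hom}(R,R)=\kappa$, and by \eqref{rmk} these also have $\mathrm{Ext}^1(R,R)=0$. Intersecting with the (also open, by Definition~\ref{defsteiner}$(ii)$ and the global generation of $\E^\vee\otimes\F$) locus of $(\E,\F,\boldsymbol\sigma)$-globally injective representations gives a non-empty open $U\subset\mathrm{Hom}(\E^a,\F^b)$; for $\alpha\in U$, the cokernel is a genuine Steiner bundle $\mathcal{S}=\mathbf{L}_{\boldsymbol\sigma}(R)$, and by Theorem~\ref{Teo1} and Lemma~\ref{decomp1}, $\mathrm{Hom}(\mathcal{S},\mathcal{S})\simeq\mathrm{Hom}(R,R)=\kappa$ and $\mathrm{Ext}^1(\mathcal{S},\mathcal{S})\simeq\mathrm{Ext}^1(R,R)=0$; combined with the automatic vanishing $\mathrm{Ext}^p(\mathcal{S},\mathcal{S})=0$ for $p\geq2$ noted above, $\mathcal{S}$ is exceptional.

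The main obstacle is the ``automatic'' higher-$\mathrm{Ext}$ vanishing for Steiner bundles — verifying $\mathrm{Ext}^p(\mathcal{S},\mathcal{S})=0$ for $p\geq2$ by the double long-exact-sequence chase, making careful use of strong exceptionality of $(\E,\F)$ (and, if needed, of the $n$-block structure) to annihilate every cross term. Everything else is a direct translation through the equivalence $\mathbf{L}_{\boldsymbol\sigma}$ together with the openness of the Schur and global-injectivity conditions; the non-emptiness of the intersection $U$ should be checked but is expected to be routine once one knows each condition is open and each is satisfied somewhere.
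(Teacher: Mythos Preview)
Your argument is correct and follows essentially the same route as the paper: part~$(i)$ is exactly Proposition~\ref{excepcional}, and part~$(ii)$ combines Theorem~\ref{decon}$(ii)$ (equivalently, Proposition~\ref{schur}) with the identity~\eqref{rmk} and the vanishing ${\rm Ext}^p(\mathcal{S},\mathcal{S})=0$ for $p\ge2$, which the paper proves by the same long-exact-sequence chase you outline.

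One expository point: your sentence ``for a Steiner bundle, `exceptional' is equivalent to `simple'\,'' is not true in general and should be removed. The vanishing of ${\rm Ext}^{\ge2}$ says nothing about ${\rm Ext}^1$; a simple Steiner bundle with $q_w(a,b)\le0$ has ${\rm Ext}^1\ne0$ by~\eqref{rmk}. Fortunately you do not actually use this claim --- in part~$(ii)$ you correctly deduce ${\rm Ext}^1=0$ from $q_w(a,b)=1$ together with simplicity via~\eqref{rmk} --- so the proof itself stands.
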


\begin{proof} 
The first claim is just Proposition (\ref{excepcional}). For the second statement, we first show that 
if $\mathcal{S}_1, \mathcal{S}_2$ are Steiner bundles of type $(\E, \F)$, then
${\rm Ext}^p(\mathcal{S}_1, \mathcal{S}_2) = 0$ for $p \geq 2$.

Indeed, suppose $\mathcal{S}_i$, $i=1,2$, are given by short exact sequences
\begin{eqnarray}\label{eqsteiner}
\xymatrix{0 \ar[r] & \E^{a_i} \ar[r] & \F^{b_i} \ar[r] & \mathcal{S}_i \ar[r] & 0}
\end{eqnarray}
Applying the functor ${\rm Hom}( - ,\F)$ to the sequence ($\ref{eqsteiner}$) for $i=1$, we have
${\rm Ext}^p ( \mathcal{S}_1, \F) = 0$, $p \geq 2$. Applying ${\rm Hom}( - , \E)$ to the same sequence, we obtain ${\rm Ext}^q(\mathcal{S}_1, \E) = 0, q \geq 0$. Finally applying the functor
${\rm Hom}(\mathcal{S}_1, - )$ to the sequence ($\ref{eqsteiner}$), $i=2$, we conclude that
${\rm Ext}^j(\mathcal{S}_1, \mathcal{S}_2) = 0$ for $j \geq 2.$

Now to prove the second claim, start by supposing that $q_w(a,b) = a^2 + b^2 - w a b = 1$. By Theorem \ref{decon} item $(ii)$ there exists a non-empty open subset $U \subset {\rm Hom}(\E^a, \F^b)$ such that for every $\alpha  \in U$ the associated bundle $\mathcal{S}$ is simple. From (\ref{rmk}) we see that ${\rm Ext}^1(\mathcal{S},\mathcal{S}) = 0$. Finally, from the considerations above, we have
${\rm Ext}^{p}(\mathcal{S},\mathcal{S}) = 0$ for $p \geq 2$. Hence $\mathcal{S}$ is exceptional.
\end{proof}

\begin{remark}\rm
Soares also proved in \cite[Thm 2.2.7]{Soa}, using a different method, that a generic Steiner bundle of type $(\E,\F)$ given by the short exact sequence (\ref{steiner}) is exceptional if and only if $a^2 + b^2 - w a b = 1$.
\end{remark}


\section{Syzygy bundles and quivers}

In this section we relate a different class of vector bundles, the syzygy bundles, with representations of quivers. A locally free sheaf $\mathcal{G} $ given by the short exact sequence
\begin{eqnarray}\label{syz}
\xymatrix{0 \ar[r] & \mathcal{G} \ar[r] & {\opn(-d_1)^{a_1} \oplus \cdots \oplus \opn(-d_m)^{a_m}}  \ar[r]^>>>> \alpha & {\opn^c}\ar[r] & 0 }
\end{eqnarray}
is called a \emph{syzygy bundle}. Here, $\alpha = (\alpha_1,\alpha_2, \ldots, \alpha_m)$ is a surjective map of sheaves on $\mathbb{P}^{n}$ given by
$$ \alpha(f_1,f_2, \ldots, f_m) = \sum_{i=1}^m \alpha_i f_i  $$
where $f_1, \ldots, f_m$ are homogeneous polynomials of degree $d_1, \ldots,d_m$ in \linebreak
$\kappa[X_0, \ldots, X_n]$ and $d_i$ are distinct positive integers. Let us assume $0 \le d_m < \cdots < d_1.$

Note that for $m=1$, the dual bundle $\mathcal{G}^{*}$ is a cokernel bundle. However, the same is not true for $m>1$, since the bundle $\F = \opn(d_1)^{a_1} \oplus \cdots \oplus \opn(d_m)^{a_m}$ is not simple.

To relate syzygy bundles with representations of quivers, we restrict ourselves, for the sake of simplicity, to the case $m=2$. The results for the general case are the same, but the notation becomes more complicated. Thus we set $m = 2$, and consider exact sequences of the form 
\begin{eqnarray}\label{syzy}
\xymatrix{0 \ar[r] & \mathcal{G} \ar[r] & {\opn(-d_1)^{a} \oplus \opn(-d_2)^{b}}  \ar[r]^>>>>>{\alpha_1,\alpha_2} & {\opn^c}\ar[r] & 0 }
\end{eqnarray}
with $d_1>d_2$. We denote by $\mathfrak{S}yz(d_1,d_2)$ the category of syzygy bundles given by short exact sequences as in (\ref{syzy}) above.

Fix, for $i=1,2$, a basis $\boldsymbol \sigma_i = \{ f^i_1, \ldots ,f^i_{w_i} \}$ of $H^0(\opn(d_i))$, where 
$w_i =\binom{n + d_i} {d_i}$. Consider the quiver below, which will be denoted by $A_{w_1,w_2}$:
\begin{eqnarray}\label{quivsyz}
\xymatrix{\bullet \ar@<1.8ex>[r]^{1} \ar@<-1.8ex>[r]^{\vdots}_{w_1} & \bullet & \ar@<1.8ex>[l]_{\vdots}^{w_2} \ar@<-1.8ex>[l]_{1}
\bullet }
\end{eqnarray}
If $(a,b,c)$ is a dimension vector of this quiver, its Tits form is given by
\begin{equation} \label{tits2}
q_{w_1,w_2}(a,b,c) = a^2+ b^2 + c^2 - w_1 a b - w_2 b c. 
\end{equation}

Let $R = (\{\kappa^a, \kappa^b, \kappa^c\}, \{A_i\}_1^{w_1}, \{B_j\}_1^{w_2})$ be a representation of $A_{w_1,w_2}$, where each $A_i$ is a $c \times a$ matrix, and each $B_j$ is a $c \times b$ matrix with entries in $\kappa$. We define
$$ \alpha_1 = \sum_{i=1}^{w_1} A_i \otimes f^1_i \,\, \text{and} \,\,
\alpha_2 = \sum_{j=1}^{w_2} B_j \otimes f^2_j , $$
so that we have a map 
\begin{equation} \label{alpha12}
(\alpha_1,\alpha_2):\opn(-d_1)^a \oplus \opn(-d_2)^b \rightarrow \opn^c.
\end{equation}

\begin{definition}
A representation $R$ of $A_{w_1,w_2}$ is $(\boldsymbol\sigma_1,\boldsymbol\sigma_2)$-\emph{globally surjective} if the map $(\alpha_1, \alpha_2)$ is surjective.
\end{definition}

Denote by $\mathfrak{R}(A_{w_1,w_2})^{gs}$ the category of $(\boldsymbol\sigma_1,\boldsymbol\sigma_2)$-globally surjective representations of $A_{w_1,w_2}$. We will now build a functor ${\bf G}_{\boldsymbol\sigma_1, \boldsymbol \sigma_2}$ between the $\mathfrak{R}(A_{w_1,w_2})^{gs}$ and the category of syzygy bundles $\mathfrak{S}yz(d_1,d_2)$.

First, let $R = (\{\kappa^a, \kappa^b, \kappa^c\}, \{A_i\}_{i=1}^{w_1}, \{B_j\}_{i=1}^{w_2})$ be a globally surjective representation of $A_{w_1,w_2}$. We define the sheaf 
$$ {\bf G}_{\boldsymbol\sigma_1, \boldsymbol\sigma_2}(R) := \ker (\alpha_1,\alpha_2), $$
where $(\alpha_1, \alpha_2)$ is the map defined above in (\ref{alpha12}). Note that, since $R$ is globally surjective, ${\bf G}_{\boldsymbol\sigma_1, \boldsymbol\sigma_2}(R)$ is a vector bundle, and it is given by the exact sequence (\ref{syzy}).

Now let $\{g_1,g_2, h\}$ be a morphism between the globally surjective representations
$$ R = (\{\kappa^a, \kappa^b, \kappa^c\}, \{A_i\}_{i=1}^{w_1}, \{B_j\}_{i=1}^{w_2}) ~~ {\rm and} $$
$$ R' = (\{\kappa^{a'}, \kappa^{b'}, \kappa^{c'}\},\{A'_i\}_{i=1}^{w_1}, \{B'_j\}_{i=1}^{w_2}). $$
The following diagram commutes for $i = 1, \ldots, w_1$ and $j = 1, \ldots, w_2$.
\begin{eqnarray}\label{diag}
\xymatrix{
\kappa^a \ar@/^0.2cm/[rr]^{g_1} \ar[dr]^{A_i} && \kappa^{a'} \ar[dr]^{A'_i} & \\
& \kappa^c \ar[rr]^h && \kappa^{c'} \\
\kappa^b \ar[ur]_{B_i} \ar@/_0.2cm/[rr]_{g_2} && \kappa^{b'} \ar[ur]_{B'_j} &}
\end{eqnarray}
It induces the following diagram:
\begin{eqnarray}\label{disyz}
\xymatrix{ 0 \ar[r] & \mathcal{G}\ar@{-->}[d]^{\phi} \ar[r]^<<<<{i_1}& \opn(-d_1)^a \oplus \opn(-d_2)^b \ar[d]^{M} \ar[r]^>>>>>{\alpha_1, \alpha_2}& \opn^c \ar[d]^{h \otimes \mathds{1}_{\opn}} \ar[r] &0  \\
0 \ar[r] & \mathcal{G}' \ar[r]^<<<<{i_2}& \opn(-d_1)^{a'} \oplus \opn(-d_2)^{b'} \ar[r]^>>>>{\alpha'_1, \alpha'_2}& \opn^{c'} \ar[r] &0 }
\end{eqnarray}
where
$$ M = \left(
\begin{array}{cc}
g_1 \otimes \mathds{1}_{\opn(-d_1)} & 0\\
0 & g_2 \otimes \mathds{1}_{\opn(-d_2)}\\
\end{array} \right)
$$
The commutativity of (\ref{diag}) implies the commutativity of the right square in (\ref{disyz}). We then have an induced  morphism $\phi:\mathcal{G}={\bf G}_{\boldsymbol{\sigma}_1, \boldsymbol{\sigma}_2}(R)\to
\mathcal{G}'={\bf G}_{\boldsymbol{\sigma}_1, \boldsymbol{\sigma}_2}(R')$, which we define to be
${\bf G}_{\boldsymbol{\sigma}_1, \boldsymbol{\sigma}_2}(g_1,g_2,h)$.


\begin{lemma}
The functor ${\bf G}_{\boldsymbol{\sigma}_1,\boldsymbol{\sigma}_2}$ is faithful and essentially surjective.
\end{lemma}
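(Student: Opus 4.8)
The plan is to verify the two properties separately, reusing the abelian-category diagram-chasing techniques already deployed for the functor $\mathbf{L}_{\boldsymbol{\sigma}}$ in the proof of Theorem \ref{Teo1}. For \emph{essential surjectivity}, I would start with an arbitrary syzygy bundle $\mathcal{G}\in\mathfrak{S}yz(d_1,d_2)$ presented by a sequence of the form (\ref{syzy}). The map $\alpha=(\alpha_1,\alpha_2)$ has components $\alpha_1:\opn(-d_1)^a\to\opn^c$ and $\alpha_2:\opn(-d_2)^b\to\opn^c$; each such component is, by definition, a matrix whose entries are homogeneous polynomials of the appropriate degree, so expanding in the fixed bases $\boldsymbol{\sigma}_1,\boldsymbol{\sigma}_2$ of $H^0(\opn(d_1))$ and $H^0(\opn(d_2))$ uniquely produces scalar matrices $A_i$ ($i=1,\dots,w_1$) and $B_j$ ($j=1,\dots,w_2$) with $\alpha_1=\sum_i A_i\otimes f^1_i$ and $\alpha_2=\sum_j B_j\otimes f^2_j$. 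The resulting representation $R=(\{\kappa^a,\kappa^b,\kappa^c\},\{A_i\},\{B_j\})$ lies in $\mathfrak{R}(A_{w_1,w_2})^{gs}$ precisely because the original $\alpha$ is surjective, and by construction ${\bf G}_{\boldsymbol{\sigma}_1,\boldsymbol{\sigma}_2}(R)=\ker\alpha=\mathcal{G}$.

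For \emph{faithfulness}, suppose $(g_1,g_2,h)$ and $(g_1',g_2',h')$ are morphisms $R\to R'$ with the same image $\phi$ under ${\bf G}_{\boldsymbol{\sigma}_1,\boldsymbol{\sigma}_2}$. Replacing the pair by its difference, it suffices to show: if $(g_1,g_2,h)$ is a morphism with ${\bf G}_{\boldsymbol{\sigma}_1,\boldsymbol{\sigma}_2}(g_1,g_2,h)=0$, then $g_1=0$, $g_2=0$, $h=0$. From the diagram (\ref{disyz}), $\phi=0$ forces $M\circ i_1 = i_2\circ\phi = 0$, i.e. the middle vertical map $M$ kills the subbundle $\mathcal{G}=\ker\alpha$; hence $M$ factors through $\alpha$, giving a map $\opn^c\to\opn(-d_1)^{a'}\oplus\opn(-d_2)^{b'}$. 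But $\operatorname{Hom}(\opn^c,\opn(-d_i)^{*})=H^0(\opn(-d_i))^{\oplus}=0$ since $d_i>0$, so $M=0$; the block-diagonal shape of $M$ then gives $g_1\otimes\mathds{1}_{\opn(-d_1)}=0$ and $g_2\otimes\mathds{1}_{\opn(-d_2)}=0$, whence $g_1=0$ and $g_2=0$. Finally, the right square of (\ref{disyz}) reads $(h\otimes\mathds{1}_{\opn})\circ\alpha = \alpha'\circ M = 0$, and since $\alpha$ is surjective this yields $h\otimes\mathds{1}_{\opn}=0$, i.e. $h=0$. This proves ${\bf G}_{\boldsymbol{\sigma}_1,\boldsymbol{\sigma}_2}$ is injective on Hom-sets.

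The step I expect to be the main (though still mild) obstacle is the vanishing argument $\operatorname{Hom}(\opn^c,\,\opn(-d_1)^{a'}\oplus\opn(-d_2)^{b'})=0$ that forces $M=0$: one must be careful that this uses $d_1,d_2>0$ (equivalently $0\le d_2<d_1$ with both positive, as assumed for the quiver construction), and in the general $m>2$ case the same argument applies degreewise. One should also double-check that the induced map $\phi$ is well defined independently of choices — but this is already built into the construction preceding the lemma via the snake/universal-property argument, so no new work is needed there. Note that ${\bf G}_{\boldsymbol{\sigma}_1,\boldsymbol{\sigma}_2}$ is \emph{not} claimed to be full, consistently with Theorem \ref{thmsyzygy} asserting only a faithful functor; fullness genuinely fails here because $\F=\opn(d_1)^a\oplus\opn(d_2)^b$ is not simple, so I would not attempt it.
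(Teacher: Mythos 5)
Your proof is correct and follows essentially the same route as the paper's: essential surjectivity by expanding $\alpha_1,\alpha_2$ in the chosen bases to recover the matrices $A_i, B_j$, and faithfulness by a diagram chase on (\ref{disyz}). In fact your faithfulness argument is more complete than the paper's, which merely asserts that commutativity of the diagram with $\phi=0$ forces $g_1=g_2=h=0$; your observation that $M$ must then factor through $\opn^c$ and that $\operatorname{Hom}(\opn^c,\opn(-d_1)^{a'}\oplus\opn(-d_2)^{b'})=0$ (using $d_1>d_2>0$) supplies the justification the paper leaves implicit.
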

\begin{proof}
We prove that ${\rm Hom}(R,R') \rightarrow {\rm Hom}({\bf G}(R), {\bf G}(R'))$ is injective. Let \linebreak $\{g_1,g_2,h\}$ be a morphism between $R$ and $R'$ such that ${\bf G}(\{g_1,g_2,h\})=0$, that is, $\phi = 0$.
Since the diagram (\ref{disyz}) commutes if $\phi = 0$ then $g_1 = g_2 = h = 0$, hence ${\bf G}$ is faithful.

Let $\mathcal{G}$ be a syzygy bundle with resolution
$$ \xymatrix{ 0 \ar[r] & \mathcal{G} \ar[r] & {\opn(-d_1)^{a} \oplus \opn(-d_2)^{b}}  \ar[r]^>>>>>{\alpha_1,\alpha_2} & {\opn^c}\ar[r] & 0 }$$ 
Then the maps $\alpha_1$ and $\alpha_2$ are given by
$$\alpha_1 = \sum_{i=1}^{w_1} A_i \otimes f_i^1 \,\, \text{and} 
\,\, \alpha_2 = \sum_{j=1}^{w_2} B_j \otimes f^2_j $$ 
with $A_i \in {\rm Hom}(k^a, k^c)$ and  $B_j \in {\rm Hom}(k^b, k^c)$. Therefore
$$ R = (\{k^a, k^b, k^c\}, \{A_i\}_1^{w_1}, \{B_j\}_1^{w_2}) $$ 
is a globally surjective representation of (\ref{quivsyz}) such that ${\bf G}(R) = \mathcal{G}$.
\end{proof}

\begin{remark}\rm
Note that ${\bf G}_{\boldsymbol{\sigma}_1,\boldsymbol{\sigma}_2}$ is not full, since not every 
$$ M \in {\rm Hom}\,(\opn(-d_1)^{a}\oplus \opn(-d_2)^{b}, \opn(-d_1)^{a'}\oplus \opn(-d_2)^{b'}) $$
is necessarily diagonal. Hence, the categories $\mathfrak{R}(A_{w_1,w_2})^{gs}$ and $\mathfrak{S}yz(d_1,d_2)$ are not, in general, equivalent.
\end{remark}

This completes the proof of the first part of Theorem \ref{thmsyzygy}. To establish its second part, we first need the following two lemmas.

\begin{lemma}\label{quoc}
The category of globally surjective representations of the quiver $A_{w_1,w_2}$ is closed under quotients, and hence closed under direct summands.
\end{lemma}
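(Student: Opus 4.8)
The claim is that $\mathfrak{R}(A_{w_1,w_2})^{gs}$ is closed under quotients, from which closure under direct summands follows immediately (a summand is a quotient). The strategy is to dualize the situation: a globally surjective representation is precisely one whose associated map $(\alpha_1,\alpha_2)$ of sheaves is surjective, equivalently one whose dual map $(\alpha_1,\alpha_2)^\lor : \opn^c{}^\lor \to \opn(d_1)^a \oplus \opn(d_2)^b$ is injective on every fiber. So a quotient of a globally surjective representation corresponds, on the dual side, to a subrepresentation of a ``fiberwise injective'' representation of the opposite quiver, and the argument becomes a fiberwise linear-algebra statement: a restriction of a fiberwise-injective family of linear maps is still fiberwise-injective. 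This is the exact analogue of Lemma \ref{subob}, which proved that $\mathfrak{R}(K_w)^{gi}$ is closed under subobjects.

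\textbf{Key steps.} First, let $q : R \twoheadrightarrow R'$ be an epimorphism in $\mathfrak{R}(A_{w_1,w_2})$ with $R = (\{\kappa^a,\kappa^b,\kappa^c\},\{A_i\},\{B_j\})$ globally surjective, and write $R' = (\{\kappa^{a'},\kappa^{b'},\kappa^{c'}\},\{A'_i\},\{B'_j\})$, so $q = \{g_1,g_2,h\}$ with $g_1,g_2,h$ surjective. Second, fix a point $P \in \pn$ and consider the map of fibers
$$ (\alpha_1,\alpha_2)(P) : \kappa^a \otimes \op{n}(-d_1)_P \,\oplus\, \kappa^b \otimes \op{n}(-d_2)_P \;\longrightarrow\; \kappa^c , $$
which by hypothesis is surjective, and the corresponding map $(\alpha'_1,\alpha'_2)(P)$ for $R'$. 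Third, observe that the commutativity of the two triangles in diagram (\ref{diag}), evaluated at $P$ after tensoring with the relevant line-bundle fibers, gives
$$ h \circ (\alpha_1,\alpha_2)(P) = (\alpha'_1,\alpha'_2)(P) \circ \left(\begin{smallmatrix} g_1 & 0 \\ 0 & g_2 \end{smallmatrix}\right). $$
Since the left-hand side is the composite of a surjection $(\alpha_1,\alpha_2)(P)$ with a surjection $h$, it is surjective; hence the right-hand side is surjective, and therefore $(\alpha'_1,\alpha'_2)(P)$ is surjective. As $P$ was arbitrary, $(\alpha'_1,\alpha'_2)$ is surjective on all fibers, so $R'$ is globally surjective. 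Fourth, for closure under direct summands: if $R \simeq R_1 \oplus R_2$ then each $R_i$ is a quotient of $R$ via the canonical projection, hence globally surjective by what we just proved.

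\textbf{Main obstacle.} There is essentially no obstacle here; the whole content is the bookkeeping of identifying ``globally surjective'' with a fiberwise surjectivity condition that is manifestly preserved under post-composition with surjections. The only point requiring a moment's care is writing the fiberwise version of diagram (\ref{diag}) correctly — in particular keeping track of the block-diagonal structure of $M$ from (\ref{disyz}) and noting that this diagonal form is exactly what makes the fiberwise map on the source a direct sum $g_1 \oplus g_2$ rather than an arbitrary linear map. Since $g_1 \oplus g_2$ is surjective whenever $g_1$ and $g_2$ are, the argument goes through with no further hypotheses. (One could alternatively phrase this entirely via the snake lemma applied to the evident $3\times 3$ diagram, deducing surjectivity of $(\alpha'_1,\alpha'_2)$ from surjectivity of $(\alpha_1,\alpha_2)$ and of $h$, but the fiberwise argument is shorter and makes the ``global'' condition transparent.)
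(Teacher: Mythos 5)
Your proof is correct and follows essentially the same route as the paper: both arguments reduce to the commutative square relating $(\alpha_1,\alpha_2)$ to the map of the quotient representation and conclude fiberwise surjectivity of the latter from surjectivity of the former composed with the surjective vertical map. (The dualization mentioned in your plan is never actually used; the direct fiberwise argument in your key steps is what the paper does.)
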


\begin{proof}
Let 
$$ R = (\{\kappa^a, \kappa^b, \kappa^c\}, \{A_i\}_{i=1}^{w_1}, \{B_j\}_{j=1}^{w_2}) $$
be a ($\boldsymbol{\sigma}_1,\boldsymbol{\sigma}_2$)-globally surjective representation of $A_{w_1,w_2}$ and
$$ R' = (\{\kappa^{a'}, \kappa^{b'}, \kappa^{c'}\}, \{A'_i\}_{i=1}^{w_1}, \{B'_j\}_{j=1}^{w_2}) $$ 
be a subrepresentation of $R$. We want to prove that the quotient representation
$$ R/R' = (\{\kappa^a/\kappa^{a'}, \kappa^b/\kappa^{b'}, \kappa^c/\kappa^{c'}\}, \{C_i\}_{i=1}^{w_1}, \{D_j\}_{j=1}^{w_2}), $$
where $C_i$ and $D_j$ are the maps induced by $A_i$ and $B_j$ respectively, is also globally surjective. We have the diagram

$$\xymatrix{ 0 \ar[d]  &  0 \ar[d] & 0 \ar[d]   \\
\kappa^{a'}\ar[dd]_{l_1}\ar@<1.8ex>[r]^{A'_1} \ar@<-1.8ex>[r]^{\vdots}_{A'_{w_1}} & \kappa^{c'} \ar[dd]_{l_3} & \kappa^{b'} \ar[dd]_{l_2} \ar@<1.8ex>[l]^{B'_{w_2}} \ar@<-1.8ex>[l]^{\vdots}_{B'_{1}} \\
& & &
\\
  \kappa^{a}\ar[dd]_{p_1} \ar@<1.8ex>[r]^{A_1} \ar@<-1.8ex>[r]^{\vdots}_{A_{w_1}} & \kappa^{c} \ar[dd]_{p_3} & \kappa^{b} \ar[dd]_{p_2} \ar@<1.8ex>[l]^{B_{w_2}} \ar@<-1.8ex>[l]^{\vdots}_{B_{1}} \\
& & &
\\
  {\kappa^a/\kappa^{a'}}\ar[d] \ar@<1.8ex>[r]^{C_1} \ar@<-1.8ex>[r]^{\vdots}_{C_{w_1}} & {\kappa^c/\kappa^{c'}} \ar[d] & {\kappa^b/\kappa^{b'}} \ar[d] \ar@<1.8ex>[l]^{D_{w_2}} \ar@<-1.8ex>[l]^{\vdots}_{D_{1}} \\
0 & 0 & 0
}$$
where $l_i$ are the inclusions and $p_i$ the projections $i=1,2,3$. Now consider the commutative diagram
$$\xymatrix{
\opn(-d_1)^{a}\oplus \opn(-d_2)^{b} \ar[d]^{M} \ar[r]^>>>>>>{(\alpha_1, \alpha_2)} &
\opn^c \ar[d]^{p_3 \otimes \mathds{1}_{\opn}} \\
\opn(-d_1)^{(a-a')}\oplus \opn(-d_2)^{(b-b')} \ar[r]^>>>>{(\gamma_1, \gamma_2)} & \opn^{(c-c')}
}$$
where
$$ M = \left(
\begin{array}{cc}
p_1 \otimes \mathds{1}_{\opn(-d_1)} & 0\\
0 & p_2 \otimes \mathds{1}_{\opn(-d_2)}\\
\end{array} \right)
$$
and $\gamma_1 = \sum_{i=1}^{w_1} C_i \otimes f^1_i$, $\gamma_2 = \sum_{j=1}^{w_2} D_j \otimes f^2_j$.

Since $p_i$ is surjective and $(\alpha_1, \alpha_2)$ is surjective for every $P \in \mathbb{P}^n$, we have that the map $(\gamma_1, \gamma_2)$ is also surjective for every point $P \in \mathbb{P}^n$, hence the quotient representation $R/R'$ is globally surjective.
\end{proof}

\begin{lemma}\label{decompsyz}
Let $R$ be a decomposable globally surjective representation of $A_{w_1,w_2}$. Then
${\bf G}_{\boldsymbol{\sigma}_1,\boldsymbol{\sigma}_2}(R)$ is also decomposable.
\end{lemma}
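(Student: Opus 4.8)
The plan is to mimic the argument used for cokernel bundles in Lemma \ref{decomp1}, but adapted to the fact that ${\bf G}_{\boldsymbol{\sigma}_1,\boldsymbol{\sigma}_2}$ is only faithful rather than fully faithful. Suppose $R\simeq R_1\oplus R_2$ with both summands nonzero and globally surjective (each $R_i$ is a direct summand, hence a quotient of $R$, so globally surjective by Lemma \ref{quoc}). Apply the functor ${\bf G}_{\boldsymbol{\sigma}_1,\boldsymbol{\sigma}_2}$ to the canonical split exact sequence
$$\xymatrix{0 \ar[r] & R_1 \ar[r]^{i_{R_1}} & R_1\oplus R_2 \ar[r]^{\pi_{R_2}} & R_2 \ar[r] & 0}$$
together with its splitting $i_{R_2}$ satisfying $\pi_{R_2}\circ i_{R_2}=\mathds{1}_{R_2}$, $\pi_{R_1}\circ i_{R_1}=\mathds{1}_{R_1}$, $i_{R_1}\pi_{R_1}+i_{R_2}\pi_{R_2}=\mathds{1}_{R}$.

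The key point is that even though ${\bf G}_{\boldsymbol{\sigma}_1,\boldsymbol{\sigma}_2}$ is not full, it \emph{is} additive (this is immediate from the construction: the induced map $\phi$ on kernels is built functorially from the block-diagonal map $M$ and $h\otimes\mathds{1}$, and sums of morphisms of representations give sums of the associated maps $M$, hence sums of the induced $\phi$), and it is a functor, so it preserves identities and composition. Hence applying ${\bf G}:={\bf G}_{\boldsymbol{\sigma}_1,\boldsymbol{\sigma}_2}$ to the split idempotents yields morphisms
$${\bf G}(i_{R_1}):{\bf G}(R_1)\to{\bf G}(R),\quad {\bf G}(\pi_{R_1}):{\bf G}(R)\to{\bf G}(R_1),\quad {\bf G}(i_{R_2}),\ {\bf G}(\pi_{R_2})$$
with ${\bf G}(\pi_{R_k})\circ{\bf G}(i_{R_k})={\bf G}(\mathds{1}_{R_k})=\mathds{1}_{{\bf G}(R_k)}$ for $k=1,2$, and ${\bf G}(i_{R_1}){\bf G}(\pi_{R_1})+{\bf G}(i_{R_2}){\bf G}(\pi_{R_2})={\bf G}(\mathds{1}_R)=\mathds{1}_{{\bf G}(R)}$. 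These relations exhibit a direct sum decomposition ${\bf G}(R)\simeq{\bf G}(R_1)\oplus{\bf G}(R_2)$: indeed the pair of idempotents $e_k:={\bf G}(i_{R_k}){\bf G}(\pi_{R_k})$ are orthogonal, sum to the identity, and have images ${\bf G}(R_k)$ respectively. So ${\bf G}(R)$ splits as a direct sum of the two subsheaves $e_1{\bf G}(R)$ and $e_2{\bf G}(R)$.

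What remains is to argue that both summands are \emph{nonzero}, so that the splitting is nontrivial. For $k=1,2$, ${\bf G}(R_k)$ is the kernel of a globally surjective map $\opn(-d_1)^{a_k}\oplus\opn(-d_2)^{b_k}\to\opn^{c_k}$, where $(a_k,b_k,c_k)$ is the dimension vector of $R_k$; this kernel has rank $a_k+b_k-c_k$, which is strictly positive (a nonzero representation whose associated map is surjective must have positive-rank kernel, since the target $\opn^{c_k}$ is free of rank $c_k$ and cannot absorb all of $\opn(-d_1)^{a_k}\oplus\opn(-d_2)^{b_k}$ by a degree/rank count — and if some $R_k$ had all of $a_k,b_k,c_k$ forcing zero kernel this would contradict surjectivity unless $R_k=0$). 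Since $R$ is decomposable, at least two summands $R_1,R_2$ are nonzero, hence ${\bf G}(R)\simeq{\bf G}(R_1)\oplus{\bf G}(R_2)$ with both factors nonzero, proving ${\bf G}(R)$ is decomposable. The main obstacle I anticipate is the bookkeeping needed to be certain the summands are nonzero: one must rule out the degenerate case where a nonzero summand $R_k$ still produces the zero sheaf, which requires the observation that global surjectivity of the associated map forces the kernel to have strictly positive rank whenever the representation is nonzero.
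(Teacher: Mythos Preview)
Your argument is correct, but it takes a more elaborate categorical route than the paper. The paper's proof is a one-liner: once Lemma~\ref{quoc} guarantees that each summand $R_i$ is globally surjective, one simply observes that the map associated to $R_1\oplus R_2$ is the block-diagonal map $\alpha^1\oplus\alpha^2$, and hence
\[
{\bf G}_{\boldsymbol{\sigma}_1,\boldsymbol{\sigma}_2}(R)=\ker(\alpha^1\oplus\alpha^2)\simeq\ker\alpha^1\oplus\ker\alpha^2={\bf G}_{\boldsymbol{\sigma}_1,\boldsymbol{\sigma}_2}(R_1)\oplus{\bf G}_{\boldsymbol{\sigma}_1,\boldsymbol{\sigma}_2}(R_2).
\]
No appeal to additivity or transported idempotents is needed, because ${\bf G}$ is literally ``take the kernel'', and kernels commute with finite direct sums on the nose.

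Your idempotent-transport argument has the virtue of being the generic proof that any additive functor preserves split exact sequences; it would work even if ${\bf G}$ were not given so concretely. But here that generality is not needed, and the direct computation is cleaner. Your careful discussion of why each ${\bf G}(R_k)$ is nonzero is a point the paper leaves implicit; your degree argument (a surjection $\opn(-d_1)^{a_k}\oplus\opn(-d_2)^{b_k}\twoheadrightarrow\opn^{c_k}$ with $d_1>d_2>0$ cannot be an isomorphism unless all exponents vanish) is the right way to close that gap.
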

\begin{proof}
Let $R \simeq R_1 \oplus R_2$ be a decomposable globally surjective representation. From Lemma \ref{quoc} we have that $R_1$ and $R_2$ are globally surjective. Let
$\mathcal{G}_i = {\bf G}_{\boldsymbol{\sigma}_1, \boldsymbol{\sigma}_2}(R_i)$, $i = 1, 2$ be given by the short exact sequence
$$\xymatrix{0 \ar[r] & \mathcal{G}_i \ar[r] & {\opn(-d_1)^{a_i} \oplus \opn(-d_2)^{b_i}}  \ar[r]^>>>>>{\alpha^i} & {\opn^{c_i}}\ar[r] & 0 }$$
where $\alpha_i = (\alpha^i_1, \alpha^i_2)$, $i=1,2$. Since
\begin{eqnarray*} 
\mathcal{G} = {\bf G}_{\boldsymbol{\sigma}_1,\boldsymbol{\sigma}_2}(R) =
\ker (\alpha_1 \oplus \alpha_2) & \simeq & 
\ker \alpha_1 \oplus \ker \alpha_2 = \\
& = & {\bf G}_{\boldsymbol{\sigma}_1,\boldsymbol{\sigma}_2}(R_1) \oplus {\bf G}_{\boldsymbol{\sigma}_1,\boldsymbol{\sigma}_2}(R_2),
\end{eqnarray*}
it follows that $\mathcal{G}$ is decomposable.
\end{proof}

We are finally in position to complete the proof of Theorem \ref{thmsyzygy}. Indeed, fix bases $\boldsymbol{\sigma}_j$ for $H^0(\opn(d_j))$, $j=1,2$. For every syzygy bundle $\mathcal{G}$ given by a short exact sequence of the form (\ref{syzy}), one can find a $(\boldsymbol{\sigma}_1,\boldsymbol{\sigma}_2)$-globally surjective representation $R$ of $A_{w_1,w_2}$ with dimension vector $(a,b,c)$ with
${\bf G}_{\boldsymbol{\sigma}_1,\boldsymbol{\sigma}_2}(R)=\mathcal{G}$. 
If $q_{w_1,w_2}(a,b,c) > 1$, then  $R$ is decomposable, by Lemma \ref{Kac}, and it must decompose as a sum of $(\boldsymbol{\sigma}_1,\boldsymbol{\sigma}_2)$-globally surjective representations by Lemma \ref{quoc}. Therefore
Lemma \ref{decompsyz} implies that $\mathcal{G}$ is also decomposable. 

\begin{remark}\label{m>2}\rm
All the results can be generalized for syzygy bundles with $m \geq 2$. To build the associated quiver, we add a vertex to the quiver with $w_i = \dim H^{0}(\opn(d_i))$ arrows from this vertex to the vertex associated to $\opn^{\oplus c}$, for each term $\opn(-d_i)^{\oplus a_i}$.
\end{remark}


\section{Monads and representations of quivers}\label{sec5}

Recall that a {\it monad} $M^{\bullet}$ on a projective variety $X$ is a complex of locally free sheaves
\begin{eqnarray}\label{mon}
M^{\bullet}:\xymatrix{ \mathcal{A}^{\oplus a} \ar[r]^{\alpha} & \mathcal{B}^{\oplus b} \ar[r]^{\beta} & \mathcal{C}^{\oplus c}  }
\end{eqnarray}
where $\alpha$ is injective and $\beta$ is surjective. The coherent sheaf $\mathcal{E}:= \ker \beta /  \im\alpha$ is called the {\it cohomology of} $M^{\bullet}$; note that $\mathcal{E}$ is locally free if and only if the map $\alpha_P$ on the fibers is injective for every point $P \in X$. 

Now let $m = \dim \rm{Hom}(\mathcal{A}, \mathcal{B})$ and
$n = \dim \rm{Hom}(\mathcal{B}, \mathcal{C})$. We also assume that $\mathcal{A}, \mathcal{B}, \mathcal{C}$ are simple vector bundles, and that the cohomology sheaf $\mathcal{E}$ is locally free.
We will denote the category of such monads by $\mathfrak{M}_{\mathcal{A,B,C}}$, regarding it as a full subcategory of the category of complexes of coherent sheaves on $X$.

Next, consider the quiver $K_{m,n}$ given by the graph
$$ \xymatrix{
\bullet \ar@<1.8ex>[r]^1 \ar@<-1.8ex>[r]^{\vdots}_m & \bullet \ar@<-1.8ex>[r]^{\vdots}_n \ar@<1.8ex>[r]^1 & \bullet}$$
The category of representations of $K_{m,n}$ is denoted by $\mathfrak{R}(K_{m,n})$. Note that its Tits form is given by
\begin{equation}\label{tits-monad}
q_{m,n}(a,b,c) = a^2 + b^2 + c^2 - mab - nbc.
\end{equation}

\subsection{Proof of Theorem \ref{thm-monads}} \label{pfthm1.3}

We begin by describing a functor from $\mathfrak{M}_{\mathcal{A,B,C}}$ to $\mathfrak{R}(K_{m,n})$ in a manner similar to what was done in the previous sections. Choose bases $\boldsymbol{\gamma} = \{\gamma_1, \cdots, \gamma_m\}$ of $\rm{Hom}(\mathcal{A}, \mathcal{B})$ and $\boldsymbol{\sigma} = \{\sigma_1, \cdots, \sigma_n\}$ of
$\rm{Hom}(\mathcal{B}, \mathcal{C})$. We can write
$$ \alpha = \sum_{i=1}^m A_i \otimes \gamma_i ~~{\rm and} ~~\beta = \sum_{j=1}^n B_j \otimes \sigma_j $$
where each $A_i$ is a $b\times a$ matrix with entries in $\kappa$, and each $B_j$ is a $c \times b$ matrix with entries in $\kappa$. 

Now let 
\begin{equation}\label{funtor G}
{\bf G}_{\boldsymbol{\gamma}, \boldsymbol{\sigma}}: \mathfrak{M}_{\mathcal{A,B,C}} \rightarrow \mathfrak{R}(K_{m,n})
\end{equation}
be the functor that to each monad $M^{\bullet}$ as in (\ref{mon}) with maps $\alpha$ and $\beta$, associates the representation
$R =(\{\kappa^a, \kappa^b, \kappa^c\}, \{A_i\}_{i=1}^m,  \{B_j\}_{j=1}^m)$. Let 
$\varphi_{\bullet} = (f,g,h)$ be a morphism between the monads $M^{\bullet}_1$ and $M^{\bullet}_2$ below 
$$ \xymatrix{\mathcal{A}^{a_1} \ar[d]_{f}\ar[r]^{\alpha_1} & \mathcal{B}^{b_1} \ar[d]_{g}\ar[r]^{\beta_1} & \mathcal{C}^{c_1} \ar[d]_{h}  \\
\mathcal{A}^{a_2} \ar[r]_{\alpha_2} & \mathcal{B}^{b_2} \ar[r]_{\beta_2} & \mathcal{C}^{c_2}
}$$

Since $\mathcal{A}$, $\mathcal{B}$ and $\mathcal{C}$ are simple, it follows that
$$ (f,g,h) =
(A \otimes \mathds{1}_{\mathcal{A}}, B \otimes \mathds{1}_{\mathcal{B}}, C \otimes \mathds{1}_{\mathcal{C}}) $$
where $A,B$ and $C$ are, respectively, $a_2\times a_1$, $b_2 \times b_1$ and $c_2 \times c_1$ matrices with entries in $\kappa$. If
$$ {\bf G}_{\boldsymbol{\gamma},\boldsymbol{\sigma}}(M^{\bullet}_1) =
(\{\kappa^{a_1}, \kappa^{b_1}, \kappa^{c_1}\}, \{A^1_i\}_{i=1}^{m}, \{B^1_j\}_{j=1}^{n}) $$ 
$$ {\rm and}~~ {\bf G}_{\boldsymbol{\gamma},\boldsymbol{\sigma}}(M^{\bullet}_2) =
(\{\kappa^{a_2}, \kappa^{b_2}, \kappa^{c_2}\}, \{A^2_i\}_{i=1}^{m}, \{B^2_j\}_{j=1}^{n}), $$
we then have 
\begin{eqnarray}\label{morphism}
\xymatrix{\kappa^{a_1}\ar[dd]_{A} \ar@<1.8ex>[r]^{A^1_1} \ar@<-1.8ex>[r]^{\vdots}_{A^1_m} & \kappa^{b_1} \ar[dd]_{B} \ar@<-1.8ex>[r]^{\vdots}_{B^1_n} \ar@<1.8ex>[r]^{B^1_1} & \kappa^{c_1}\ar[dd]_{C}\\
& & & \\
\kappa^{a_2} \ar@<1.8ex>[r]^{A^2_1} \ar@<-1.8ex>[r]^{\vdots}_{A^2_m} & \kappa^{b_2} \ar@<-1.8ex>[r]^{\vdots}_{B^2_n} \ar@<1.8ex>[r]^{B^2_1} & \kappa^{c_2}}
\end{eqnarray}
$$B A^1_i = A^2_i A \;\; {\rm and} \;\;  CB^1_j = B^2_j B \;\; {\rm for} \;\; i=1,\cdots,m \;\; \mbox{and} \;\; j=1,\cdots, n.$$
Hence the matrices $A$,$B$ and $C$ define a morphism between the representations. From the construction of the functor we see that
$$ {\bf G}_{\boldsymbol{\gamma},\boldsymbol{\sigma}}:
{\rm Hom}(M^{\bullet}_1, M^{\bullet}_2)\rightarrow
{\rm Hom}({\bf G}_{\boldsymbol{\gamma},\boldsymbol{\sigma}}(M^{\bullet}_1),
{\bf G}_{\boldsymbol{\gamma},\boldsymbol{\sigma}}(M^{\bullet}_2)) $$
is an isomorphism, thus we have the following result, which corresponds to the first part of Theorem \ref{thm-monads}

\begin{proposition}\label{equivmonads}
The category $\mathfrak{M}_{\mathcal{A,B,C}}$ is equivalent to a full subcategory of $\mathfrak{R}(K_{m,n})$. 
\end{proposition}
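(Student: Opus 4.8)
The plan is to show that the functor ${\bf G}_{\boldsymbol{\gamma},\boldsymbol{\sigma}}$ constructed above is fully faithful, exact on its essential image, and that its essential image is a full subcategory closed under the relevant operations; then ``equivalence with a full subcategory'' follows formally. The bulk of the work is already laid out in the discussion preceding the statement, so the proof mostly needs to assemble those observations cleanly.

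\medskip

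\textbf{Step 1: Faithfulness and fullness.} A morphism of monads $\varphi_\bullet = (f,g,h)\colon M^\bullet_1\to M^\bullet_2$ consists of sheaf maps $f\in{\rm Hom}(\mathcal{A}^{a_1},\mathcal{A}^{a_2})$, $g\in{\rm Hom}(\mathcal{B}^{b_1},\mathcal{B}^{b_2})$, $h\in{\rm Hom}(\mathcal{C}^{c_1},\mathcal{C}^{c_2})$ making the two squares commute. Because $\mathcal{A},\mathcal{B},\mathcal{C}$ are simple, $\kappa={\rm Hom}(\mathcal{A},\mathcal{A})$ etc., so each of these Hom spaces is canonically ${\rm Hom}(\kappa^{a_1},\kappa^{a_2})$, ${\rm Hom}(\kappa^{b_1},\kappa^{b_2})$, ${\rm Hom}(\kappa^{c_1},\kappa^{c_2})$; write $f=A\otimes\mathds{1}_{\mathcal{A}}$, $g=B\otimes\mathds{1}_{\mathcal{B}}$, $h=C\otimes\mathds{1}_{\mathcal{C}}$. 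The commutativity $\beta_2 g = h\beta_1$ and $\alpha_2 f = g\alpha_1$, after expanding $\alpha_i,\beta_i$ in the fixed bases $\boldsymbol{\gamma},\boldsymbol{\sigma}$ and using that these bases are linearly independent, is equivalent to the matrix identities $BA^1_i = A^2_i A$ and $CB^1_j = B^2_j B$. These are exactly the conditions for $(A,B,C)$ to be a morphism of the representations ${\bf G}_{\boldsymbol{\gamma},\boldsymbol{\sigma}}(M^\bullet_1)\to{\bf G}_{\boldsymbol{\gamma},\boldsymbol{\sigma}}(M^\bullet_2)$. Thus the map ${\bf G}_{\boldsymbol{\gamma},\boldsymbol{\sigma}}\colon{\rm Hom}(M^\bullet_1,M^\bullet_2)\to{\rm Hom}({\bf G}(M^\bullet_1),{\bf G}(M^\bullet_2))$ is a bijection, i.e.\ ${\bf G}_{\boldsymbol{\gamma},\boldsymbol{\sigma}}$ is fully faithful. (The one subtlety to spell out is that a morphism of complexes is literally just the pair of commuting squares, with no homotopy ambiguity, since ${\rm Hom}$ in the category $\mathfrak{M}_{\mathcal{A,B,C}}$ is taken in the category of complexes, not the derived category.)

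\medskip

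\textbf{Step 2: Identifying the essential image.} By definition of $\mathfrak{M}_{\mathcal{A,B,C}}$, an object is a monad $\mathcal{A}^a\to\mathcal{B}^b\to\mathcal{C}^c$ with $\alpha$ injective as a sheaf map, $\beta$ surjective, and cohomology $\mathcal{E}$ locally free; writing $\alpha,\beta$ in the bases $\boldsymbol{\gamma},\boldsymbol{\sigma}$ produces matrices $\{A_i\},\{B_j\}$, and conversely any choice of $\{A_i\}\in{\rm Hom}(\kappa^a,\kappa^b)^{\oplus m}$, $\{B_j\}\in{\rm Hom}(\kappa^b,\kappa^c)^{\oplus n}$ defines maps $\alpha=\sum A_i\otimes\gamma_i$, $\beta=\sum B_j\otimes\sigma_j$, which may or may not form a monad. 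Hence a representation $R=(\{\kappa^a,\kappa^b,\kappa^c\},\{A_i\},\{B_j\})$ lies in the essential image of ${\bf G}_{\boldsymbol{\gamma},\boldsymbol{\sigma}}$ if and only if the associated $\alpha,\beta$ satisfy: $\beta\circ\alpha = 0$, $\alpha_P$ injective for all $P\in X$, $\beta_P$ surjective for all $P\in X$, and $\ker\beta/\im\alpha$ locally free — conditions that cut out a (full) subcategory $\mathfrak{R}(K_{m,n})^{\rm mon}$ of $\mathfrak{R}(K_{m,n})$. Fullness of this subcategory inside $\mathfrak{R}(K_{m,n})$ is automatic: it is defined by conditions on objects only, so all morphisms between two such representations are kept. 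Combining with Step 1, ${\bf G}_{\boldsymbol{\gamma},\boldsymbol{\sigma}}$ gives an equivalence $\mathfrak{M}_{\mathcal{A,B,C}}\simeq\mathfrak{R}(K_{m,n})^{\rm mon}$, which is a full subcategory of $\mathfrak{R}(K_{m,n})$, proving Proposition \ref{equivmonads}.

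\medskip

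\textbf{Main obstacle.} The only genuinely delicate point is the bookkeeping in Step 1: one must check that the correspondence between $(f,g,h)$ and $(A,B,C)$ is well-defined independently of representatives and that commutativity of the monad squares is \emph{equivalent} (not merely implied by) the representation's commutativity relations — this uses linear independence of $\boldsymbol{\gamma}$ and $\boldsymbol{\sigma}$ in an essential way, exactly as in the proof of Theorem \ref{Teo1}. Everything else is formal category theory, and I expect the write-up to be short, essentially recording the computation $BA^1_i = A^2_i A$, $CB^1_j = B^2_j B$ already displayed before the statement and invoking it to conclude bijectivity on Hom-sets.
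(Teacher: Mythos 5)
Your proposal is correct and follows essentially the same route as the paper: use simplicity of $\mathcal{A},\mathcal{B},\mathcal{C}$ to identify morphisms of monads with triples of matrices $(A,B,C)$, check that commutativity of the monad squares is equivalent (via linear independence of $\boldsymbol{\gamma},\boldsymbol{\sigma}$) to the representation relations $BA^1_i=A^2_iA$, $CB^1_j=B^2_jB$, and identify the essential image with the full subcategory of globally injective/surjective representations satisfying the quadratic relations imposed by $\beta\alpha=0$. Your Step 2 is in fact slightly more explicit than the paper's text about why the image subcategory is full.
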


Let us further characterise the subcategory of $\mathfrak{R}(K_{m,n})$ obtained in this way. The monad conditions imply that $\alpha(P)$ is injective and $\beta(P)$ is surjective for every $P\in X$. Therefore we say that a representation 
$$ R = (\{\kappa^a, \kappa^b, \kappa^c\}, \{A_i\}_{i=1}^m,  \{B_j\}_{j=1}^n) $$
is $(\boldsymbol{\boldsymbol{\gamma}},\boldsymbol{\boldsymbol{\sigma}})$-globally injective and surjective if 
$\alpha(P) = \sum_{i=1}^m A_i \otimes\gamma_i(P)$ is injective and $\beta(P) = \sum_{j=1}^n B_j \otimes \sigma_j(P)$ is surjective, for every $P \in X$. In addition, the matrices $A_i$ and $B_j$ must satisfy quadratic equations imposed by the condition $\beta\alpha=0$: 
$$ \sum_{1 \leq i \leq j \leq m} (B_i A_j + B_j A_i)(\sigma_i \gamma_j) = 0 ;$$
note that the precise relation depends on the choice of bases $\boldsymbol{\gamma}$ and $\boldsymbol{\sigma}$. We denote by $\mathfrak{G}^{\rm{gis}}_{m,n}$ the full subcategory of $\mathfrak{R}(K_{m,n})$ consisting of the objects satisfying the conditions above. 

In order to prove the second part of Theorem \ref{thm-monads}, our first goal is to prove that $\mathfrak{G}^{\rm gis}_{m,n}$ is closed under direct summands. 

\begin{lemma}\label{subobrep}
The category $\mathfrak{G}^{\rm{gis}}_{m,n}$ is closed under direct summands.
\end{lemma}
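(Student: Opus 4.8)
The plan is to show that if $R \in \mathfrak{G}^{\rm gis}_{m,n}$ and $R \simeq R_1 \oplus R_2$ in $\mathfrak{R}(K_{m,n})$, then each $R_i$ again satisfies the three conditions defining $\mathfrak{G}^{\rm gis}_{m,n}$: global injectivity of $\alpha(P)$, global surjectivity of $\beta(P)$, and the quadratic relations coming from $\beta\alpha = 0$. Write $R_i = (\{\kappa^{a_i}, \kappa^{b_i}, \kappa^{c_i}\}, \{A^i_k\}, \{B^i_l\})$, so that with respect to the decomposition $\kappa^a = \kappa^{a_1}\oplus\kappa^{a_2}$, etc., each matrix $A_k$ (resp.\ $B_l$) is block-diagonal with blocks $A^1_k, A^2_k$ (resp.\ $B^1_l, B^2_l$). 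Consequently $\alpha(P)$ and $\beta(P)$ are block-diagonal as maps of vector bundles: $\alpha(P) = \alpha_1(P) \oplus \alpha_2(P)$ and $\beta(P) = \beta_1(P)\oplus\beta_2(P)$ where $\alpha_i(P) = \sum_k A^i_k\otimes\gamma_k(P)$ and $\beta_i(P) = \sum_l B^i_l\otimes\sigma_l(P)$.

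From this block structure the three conditions follow almost immediately. First, a block-diagonal linear map is injective (resp.\ surjective) if and only if each diagonal block is injective (resp.\ surjective); hence injectivity of $\alpha(P)$ for all $P$ forces injectivity of each $\alpha_i(P)$ for all $P$, and likewise surjectivity of $\beta(P)$ forces surjectivity of each $\beta_i(P)$. Second, the composite $\beta(P)\alpha(P)$ is block-diagonal with diagonal blocks $\beta_i(P)\alpha_i(P)$, so $\beta\alpha = 0$ forces $\beta_i\alpha_i = 0$ for $i=1,2$; equivalently, the quadratic relations $\sum_{k\le l}(B^i_k A^i_l + B^i_l A^i_k)(\sigma_k\gamma_l) = 0$ hold. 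Therefore $R_1, R_2 \in \mathfrak{G}^{\rm gis}_{m,n}$, which is exactly the claim.

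There is no real obstacle here; the only point that needs a word of care is making explicit that passing to a direct summand of a representation of $K_{m,n}$ literally means choosing bases adapted to the splitting so that all the structure maps become block-diagonal — this uses that $\mathfrak{R}(K_{m,n})$ is an abelian category and that a direct sum decomposition of an object is realised by idempotent endomorphisms, which in a representation of a quiver act diagonally on each $V_i$ after a change of basis. Once that normal form is in place, everything reduces to the elementary observation about block-diagonal maps of sheaves, applied fibrewise. I would state the block-diagonal normal form in one sentence, then verify the three conditions in three short lines, and conclude.
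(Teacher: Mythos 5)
Your proof is correct and is essentially the paper's argument in concrete form: the paper notes that a direct summand is simultaneously a subrepresentation (hence inherits global injectivity of $\alpha(P)$ and the relations, cf.\ Lemma \ref{subob}) and a quotient representation (hence inherits global surjectivity of $\beta(P)$, cf.\ Lemma \ref{quoc}), which is exactly what your block-diagonal normal form makes explicit. No gap; the two write-ups differ only in packaging.
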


\begin{proof}
It is a general fact that if $S$ is a subrepresentation of a quiver representation $R$ which satisfies the given relations, then $S$ also satisfies the same relations.

Moreover, every subrepresentation of a $\boldsymbol{\gamma}$-globally injective representation will also be $\boldsymbol{\gamma}$-globally injective (cf. Lemma \ref{subob} above), while any quotient representation of a $\boldsymbol{\sigma}$-globally surjective representation will also be $\boldsymbol{\sigma}$-globally surjective (cf. Lemma \ref{quoc} above).
\end{proof}

Next, the previous lemma allows us to relate the decomposability of the monad with the decomposability of the associated quiver representation.

\begin{proposition} \label{decomp-monad}
A monad $M^{\bullet}$ is decomposable if and only if the associated quiver representation ${\bf G}_{\boldsymbol{\gamma},\boldsymbol{\sigma}}(M^{\bullet})$ is decomposable. In addition, if ${\bf G}_{\boldsymbol{\gamma},\boldsymbol{\sigma}}(M^{\bullet})$ is decomposable, then the cohomology of $M^{\bullet}$ is a decomposable vector bundle.
\end{proposition}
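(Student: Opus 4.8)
The plan is to derive the equivalence of decomposabilities formally from the two structural results already at hand — Proposition~\ref{equivmonads} and Lemma~\ref{subobrep} — and then to obtain the statement on cohomology from the fact that passing to the cohomology sheaf is additive. Recall that ${\bf G}_{\boldsymbol{\gamma},\boldsymbol{\sigma}}$ is additive and fully faithful, and that, by its very construction together with the definition of $\mathfrak{G}^{\mathrm{gis}}_{m,n}$, its essential image is exactly $\mathfrak{G}^{\mathrm{gis}}_{m,n}$: every object of $\mathfrak{G}^{\mathrm{gis}}_{m,n}$ gives, via $\alpha=\sum_i A_i\otimes\gamma_i$ and $\beta=\sum_j B_j\otimes\sigma_j$, a complex (by the imposed relations) with $\alpha$ fibrewise injective and $\beta$ fibrewise surjective, hence a monad in $\mathfrak{M}_{\mathcal{A,B,C}}$ — its cohomology being locally free as the kernel of a surjection of locally free sheaves — and this monad is sent by ${\bf G}_{\boldsymbol{\gamma},\boldsymbol{\sigma}}$ back to the chosen representation.

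For the ``only if'' part, a decomposition $M^\bullet\simeq M^\bullet_1\oplus M^\bullet_2$ with $M^\bullet_1,M^\bullet_2\neq0$ yields, by additivity, ${\bf G}_{\boldsymbol{\gamma},\boldsymbol{\sigma}}(M^\bullet)\simeq{\bf G}_{\boldsymbol{\gamma},\boldsymbol{\sigma}}(M^\bullet_1)\oplus{\bf G}_{\boldsymbol{\gamma},\boldsymbol{\sigma}}(M^\bullet_2)$, with both summands nonzero because the dimension vector of ${\bf G}_{\boldsymbol{\gamma},\boldsymbol{\sigma}}(M^\bullet_i)$ is the triple of exponents $(a_i,b_i,c_i)$, which vanishes precisely when $M^\bullet_i=0$. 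For the ``if'' part, suppose ${\bf G}_{\boldsymbol{\gamma},\boldsymbol{\sigma}}(M^\bullet)\simeq R_1\oplus R_2$ with $R_1,R_2\neq0$. By Lemma~\ref{subobrep}, $\mathfrak{G}^{\mathrm{gis}}_{m,n}$ is closed under direct summands, so $R_1,R_2\in\mathfrak{G}^{\mathrm{gis}}_{m,n}$, and by the essential surjectivity just recalled there are nonzero monads $M^\bullet_1,M^\bullet_2\in\mathfrak{M}_{\mathcal{A,B,C}}$ with ${\bf G}_{\boldsymbol{\gamma},\boldsymbol{\sigma}}(M^\bullet_i)\simeq R_i$. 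Then ${\bf G}_{\boldsymbol{\gamma},\boldsymbol{\sigma}}(M^\bullet)\simeq{\bf G}_{\boldsymbol{\gamma},\boldsymbol{\sigma}}(M^\bullet_1\oplus M^\bullet_2)$, and a fully faithful functor reflects isomorphisms, so $M^\bullet\simeq M^\bullet_1\oplus M^\bullet_2$ with both factors nonzero; hence $M^\bullet$ is decomposable.

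Finally, the assignment $M^\bullet\mapsto\mathcal{E}=\ker\beta/\im\alpha$ is additive on $\mathfrak{M}_{\mathcal{A,B,C}}$ — the cohomology of $M^\bullet_1\oplus M^\bullet_2$ is $\mathcal{E}_1\oplus\mathcal{E}_2$ — so from the decomposition $M^\bullet\simeq M^\bullet_1\oplus M^\bullet_2$ obtained above we get $\mathcal{E}\simeq\mathcal{E}_1\oplus\mathcal{E}_2$ with each $\mathcal{E}_i$ the locally free cohomology of $M^\bullet_i$. I expect the main obstacle to lie exactly here: one must check that this splitting of $\mathcal{E}$ is nontrivial, i.e.\ that one does not have $\mathcal{E}_i=0$ for some index — equivalently, that neither $M^\bullet_i$ is a trivial summand, an exact complex $0\to\mathcal{A}^{a_i}\to\mathcal{B}^{b_i}\to\mathcal{C}^{c_i}\to0$. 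I would deal with this by observing that a nonzero monad in $\mathfrak{M}_{\mathcal{A,B,C}}$ has nonzero cohomology whenever no exact complex of that shape exists — which is the case in all the applications of the proposition, in particular whenever $\mathcal{E}$ has the expected positive rank — so that the non-vanishing of $R_1$ and $R_2$ forces the non-vanishing of $\mathcal{E}_1$ and $\mathcal{E}_2$, and $\mathcal{E}$ is genuinely decomposable.
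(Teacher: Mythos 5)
Your proof follows essentially the same route as the paper: additivity of ${\bf G}_{\boldsymbol{\gamma},\boldsymbol{\sigma}}$ gives one direction, while Lemma~\ref{subobrep} together with essential surjectivity onto $\mathfrak{G}^{\rm gis}_{m,n}$ and full faithfulness gives the other, and the cohomology statement follows from additivity of $M^{\bullet}\mapsto\ker\beta/\im\alpha$. The one point where you go beyond the paper is in flagging (correctly) that a nonzero summand $M^{\bullet}_i$ could in principle be an exact complex with vanishing cohomology, so that the induced splitting of $\mathcal{E}$ might be trivial --- a subtlety the paper's proof passes over silently --- though your resolution of it is only sketched.
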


\begin{proof}
We begin by showing that the functor
${\bf G}_{\boldsymbol{\gamma},\boldsymbol{\sigma}}:\mathfrak{M}_{\mathcal{A,B,C}} \rightarrow \mathfrak{G}^{gis}_{m,n}$ preserves direct sums, that is, ${\bf G}_{\boldsymbol{\gamma},\boldsymbol{\sigma}}(M^{\bullet}_1 \oplus M^{\bullet}_2) \simeq
{\bf G}_{\boldsymbol{\gamma},\boldsymbol{\sigma}}(M^{\bullet}_1) \oplus
{\bf G}_{\boldsymbol{\gamma},\boldsymbol{\sigma}}(M^{\bullet}_2)$. In particular, if $M^{\bullet}$ is decomposable then $R = {\bf G}_{\boldsymbol{\gamma},\boldsymbol{\sigma}}(M^{\bullet})$ is decomposable.

Indeed, consider a monad $M^{\bullet} = M^{\bullet}_1 \oplus M^{\bullet}_2$ given by
$$ \xymatrix{\mathcal{A}^{a_1+a_2} \ar[r]^{\alpha} & \mathcal{B}^{b_1 + b_2} \ar[r]^{\beta}&
\mathcal{C}^{c_1+ c_2}} $$
where $\alpha =  \alpha_1\oplus \alpha_2$  and $\beta = \beta_1 \oplus \beta_2$ with
$\alpha_i \in {\rm Hom}(\mathcal{A}^{a_i}, \mathcal{B}^{b_i})$ and
$ \beta_i \in {\rm Hom}(\mathcal{B}^{b_i}, \mathcal{C}^{c_i}),$ $i=1,2$. We write $\alpha_i, \beta_i$ as
$$ \alpha_i = \sum_{l=1}^m A^i_l\otimes \gamma_l \,\, {\mbox{and}}\,\,
\beta_i = \sum_{j=1}^n B^i_j \otimes \sigma_j ,\,\, i=1,2. $$
Then ${\bf G}_{\boldsymbol{\gamma}, \boldsymbol{\sigma}}(M^{\bullet}_1 \oplus M^{\bullet}_2)$ is the representation 
$$ \xymatrix{\kappa^{a_1 \oplus a_2}
 \ar@<1.8ex>[r]^{A^1_1\oplus A^2_1} \ar@<-1.8ex>[r]^{\vdots}_{A^1_m\oplus A^2_m} & \kappa^{b_1 \oplus b_2} \ar@<-1.8ex>[r]^{\vdots}_{B^1_n\oplus B^2_n} \ar@<1.8ex>[r]^{B^1_1\oplus B^2_n} & \kappa^{c_1\oplus c_2}} $$
and it is clear that 
$$ {\bf G}_{\boldsymbol{\gamma}, \boldsymbol{\sigma}}(M^{\bullet}_1\oplus M^{\bullet}_2) =
(\{\kappa^{a_1+ a_2}, \kappa^{b_1+ b_2}, \kappa^{c_1+ c_2} \}, \{A^1_i\oplus A^2_i\}_{i=1}^m, \{B^1_j\oplus B^2_j\}_{j=1}^n) $$
$$ \simeq (\{\kappa^{a_1}, \kappa^{b_1}, \kappa^{c_1}\}, \{A^1_i\}_{i=1}^{m},\{B^1_j\}_{j=1}^{n})\oplus
(\{\kappa^{a_2}, \kappa^{b_2}, \kappa^{c_2}\}, \{A^2_i\}_{i=1}^{m},\{B^2_j\}_{j=1}^n) $$ 
$$ =
{\bf G}_{\boldsymbol{\gamma}, \boldsymbol{\sigma}}((M^{\bullet}_1) \oplus {\bf G}_{\boldsymbol{\gamma}, \boldsymbol{\sigma}}((M^{\bullet}_2) .$$

For the converse, suppose $R = {\bf G}_{\boldsymbol{\gamma}, \boldsymbol{\sigma}}(M^{\bullet}) \simeq R_1 \oplus R_2$. By Lemma
\ref{subobrep} we know that there are monads $M^{\bullet}_i$, for $i=1,2$, such that 
$R_i = {\bf G}_{\boldsymbol{\gamma}, \boldsymbol{\sigma}}(M^{\bullet}_i)$. It follows that
$$ {\bf G}_{\boldsymbol{\gamma}, \boldsymbol{\sigma}}(M^{\bullet}) \simeq
{\bf G}_{\boldsymbol{\gamma}, \boldsymbol{\sigma}}(M^{\bullet}_1) \oplus {\bf G}_{\boldsymbol{\gamma}, \boldsymbol{\sigma}}(M^{\bullet}_2) \simeq
{\bf G}_{\boldsymbol{\gamma},\boldsymbol{\sigma}}(M^{\bullet}_1 \oplus M^{\bullet}_2) $$
hence $M^{\bullet}$ is decomposable.

The second claim follows easily from the observation that if a monad is decomposable, then so is its cohomology sheaf.
\end{proof}

The completion of the proof of Theorem \ref{thm-monads} is at hand: if $M^{\bullet}$ is a monad of the form (\ref{mon}) with $(a,b,c)$ satisfying $q_{m,n}(a,b,c) = a^2 + b^2 + c^2 - mab - nbc > 1$, then the associated quiver representation is decomposable, by Proposition \ref{Kac}. This means that $M^{\bullet}$ itself, and hence its cohomology sheaf, must also be decomposable, as desired.

\subsection{Decomposability of bundles vs. decomposability of representations}

The last goal of this paper will be to examine under which assumption one does have the converse of the second part of Proposition \ref{decomp-monad}, that is, if the cohomology of a monad is decomposable as a vector bundle, then the quiver representation associated to the monad is also decomposable. The difficulty here, of course, is to argue that if the cohomology of a monad of the form (\ref{mon}) decomposes, then its summands are also cohomologies of monads of the same form. Such statement can be proved under the following additional assumptions, and using the cohomological characterisation of monads provided by Theorem \ref{carmon} above.

Let ${\bf B} = (\boldsymbol{\mathcal{F}}_0, \cdots, \boldsymbol{\mathcal{F}}_n)$ be an $n$-block collection generating the bounded derived category $D^b(X)$ of coherent sheaves on $X$, and let ${}^\vee{\bf B}$ its left dual $n$-block collection, as in the statement of Theorem \ref{carmon}. Let $\mathcal{E}$ be a vector bundle on $X$ given by the cohomology of type (\ref{moncarmon}), and assume that $\mathcal{E}$ is decomposable: $\mathcal{E} \simeq \mathcal{E}_1 \oplus \mathcal{E}_2$. From Theorem \ref{carmon}, since $\mathcal{E}$ has natural cohomology with respect to ${}^\vee{\bf B}$ we have
$$ \dim {\rm Ext}^{n-i-1}(\mathcal{H}^{n-i}_{i_0}, \mathcal{E}) = a, $$
$$ \dim {\rm Ext}^{n-j}(\mathcal{H}^{n-j}_{j_0}, \mathcal{E}) = b, $$
$$ \dim {\rm Ext}^{n-k+1}(\mathcal{H}^{n-k}_{k_0}, \mathcal{E}) = c, $$
and ${\rm ext}^{p}(\mathcal{H}^{m}_{q}, \mathcal{E}) = 0$ otherwise. Hence for $l=1,2$,
$$ \dim {\rm Ext}^{n-i-1}(\mathcal{H}^{n-i}_{i_0}, \mathcal{E}_l) = a_l, $$
$$ \dim {\rm Ext}^{n-j}(\mathcal{H}^{n-j}_{j_0}, \mathcal{E}_l) = b_l, $$
$$ \dim {\rm Ext}^{n-k+1}(\mathcal{H}^{n-k}_{k_0}, \mathcal{E}_l) = c_l,~~ l =1,2, $$
where $a=a_1+a_2$, $b=b_1+b_2$, and $c=c_1+c_2$, with $a_l,b_l, c_l \geq 0$ and
${\rm Ext}^{q}(\mathcal{H}^m_p, \mathcal{E}_l) = 0$ otherwise.

Let us prove that $\mathfrak{M}_{\F^i_{i_0}, \F^j_{j_0}, \F^k_{k_0}}$ is closed under direct summands. From Lemma \ref{beilinsonlemma} and Theorem \ref{carmon}, $\mathcal{E}_l$ is isomorphic to a Beilinson monad $G^{\bullet}_l$, $l=1,2$, where each $G^u_l$ is given by

$$G^u_l = \bigoplus_{p,q}{\rm Ext}^{n-q+u}(\mathcal{H}^{n-q}_p,\mathcal{E}_l) \otimes \F^q_p,\,\, l=1,2.$$ Then we have

$$G^u_l = 0, \, l=1,2; \, u<-1, \,u> 1,$$ and

$$G^{-1}_l = \bigoplus_{p,q} {\rm Ext}^{n-q-1}(\mathcal{H}^{n-q}_{p}, \mathcal{E}_l) \otimes \F^q_p = {\rm Ext}^{n-i-1}(\mathcal{H}^{n-i}_{i_0}, \mathcal{E}_l) \otimes \F^i_{i_0} \simeq (\F^i_{i_0})^{a_l} $$

$$G^{0}_l = \bigoplus_{p,q} {\rm Ext}^{n-q}(\mathcal{H}^{n-q}_{p}, \mathcal{E}_l) \otimes \F^q_p = {\rm Ext}^{n-j}(\mathcal{H}^{n-j}_{j_0}, \mathcal{E}_l) \otimes \F^j_{j_0} \simeq (\F^j_{j_0})^{b_l} $$

$$G^{1}_l = \bigoplus_{p,q} {\rm Ext}^{n-q+1}(\mathcal{H}^{n-q}_{p}, \mathcal{E}_l) \otimes \F^q_p = {\rm Ext}^{n-k+1}(\mathcal{H}^{n-k}_{k_0}, \mathcal{E}_l) \otimes \F^k_{k_0} \simeq (\F^k_{k_0})^{c_l}$$ 

for $l=1,2$. From the definition of Beilinson monad, Definition \ref{Beilinson}, $\mathcal{E}_l$ is isomorphic to the monad

\begin{eqnarray}\label{monadcaract}
\xymatrix{(\F^i_{i_0})^{a_l} \ar[r] & (\F^j_{j_0})^{b_l} \ar[r] & (\F^k_{k_0})^{c_l}}
\end{eqnarray} with $l=1,2$ and $a_l,b_l,c_l \geq 0$.  We have the following cases:

\begin{enumerate}

\item If $a_l, b_l, c_l \neq 0$ for $l=1,2, $ $\mathcal{E}_1$ and $\mathcal{E}_2$ are cohomology of a monad of type (\ref{monadcaract})

$$\mathcal{E}_1 = H^{0}(G^{\bullet}_1),  \,\, \mathcal{E}_2 = H^{0}(G^{\bullet}_2).$$

\item If $a_1,b_1,c_1,b_2,c_2 \neq 0$ and $a_2 = 0$, then $\mathcal{E}_1 = H^{0}(G^{\bullet}_1)$ and $\mathcal{E}_2$ is given by the short exact sequence

$$\xymatrix{0 \ar[r] & \mathcal{E}_2 \ar[r] & (\F^j_{j_0})^{b_2} \ar[r] & (\F^k_{k_0})^{c_2} \ar[r] & 0 .}$$ 

\item If $a_1,b_1,c_1,a_2,b_2 \neq 0$ and $c_2 = 0$ then $\mathcal{E}_1 = H^{0}(G^{\bullet}_1)$ and $\mathcal{E}_2$  is given by the short exact sequence

$$\xymatrix{0 \ar[r] & (\F^i_{i_0})^{a_2} \ar[r] & (\F^j_{j_0})^{b_2} \ar[r] & \mathcal{E}_2 \ar[r] &\ 0 }. $$

\item If $a_1,b_1,c_1,b_2 \neq 0$ and $a_2=c_2 =0$, then $\mathcal{E}_1 = H^{0}(G^{\bullet}_1)$ and $\mathcal{E}_2 \simeq (\F^j_{j_0})^{b_2}$.

\item If $b_1,c_1,a_2,b_2 \neq 0 $ and $a_1 = c_2 = 0$ then

$$\xymatrix{0 \ar[r] & \mathcal{E}_1 \ar[r] & (\F^j_{j_0})^{b_1} \ar[r] & (\F^k_{k_0})^{c_1} \ar[r] & 0 }$$ and

$$\xymatrix{0 \ar[r] & (\F^i_{i_0})^{a_2} \ar[r] & (\F^j_{j_0})^{b_2} \ar[r] & \mathcal{E}_2 \ar[r] &\ 0 }. $$

And the symmetric cases to cases $2,3,4 $ and $5$. 
\end{enumerate} We have just proved that:

\begin{lemma}\label{dirsumm}
The category $\mathfrak{M}_{\F^i_{i_0}, \F^j_{j_0}, \F^k_{k_0}}$ is closed under direct summands.
\end{lemma}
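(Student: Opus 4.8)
The plan is to read the statement off from the cohomological characterisation of monads in Theorem~\ref{carmon}, together with the Beilinson monad construction of Lemma~\ref{beilinsonlemma}. Let $M^{\bullet}$ be a monad in $\mathfrak{M}_{\F^i_{i_0},\F^j_{j_0},\F^k_{k_0}}$ with cohomology $\mathcal{E}$, and suppose $\mathcal{E}\simeq\mathcal{E}_1\oplus\mathcal{E}_2$. First I would apply Theorem~\ref{carmon}: $\mathcal{E}$ has natural cohomology with respect to ${}^{\vee}{\bf B}$, the only nonvanishing groups ${\rm Ext}^{\bullet}(\mathcal{H}^{m}_{q},\mathcal{E})$ occurring at $(q,m)=(i_0,n-i),(j_0,n-j),(k_0,n-k)$, where they record the integers $a$, $b$, $c$. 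Since ${\rm Ext}$ is additive in each argument, each $\mathcal{E}_l$ again has natural cohomology with respect to ${}^{\vee}{\bf B}$, with nonvanishing ${\rm Ext}$ groups in exactly the same three positions and of dimensions $a_l,b_l,c_l\ge 0$ satisfying $a_1+a_2=a$, $b_1+b_2=b$, $c_1+c_2=c$; the new feature is that some of the $a_l,b_l,c_l$ may now vanish. Note also that $\mathcal{E}_l$ is locally free, being a direct summand of the locally free sheaf $\mathcal{E}$.

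Next I would feed each $\mathcal{E}_l$ into Lemma~\ref{beilinsonlemma}, so that $\mathcal{E}_l$ is isomorphic to its Beilinson monad $G^{\bullet}_l$ with $G^{r}_l=\bigoplus_{p,q}{\rm Ext}^{n-q+r}(\mathcal{H}^{n-q}_{p},\mathcal{E}_l)\otimes\F^{q}_{p}$. The vanishing just established forces $G^{r}_l=0$ for $r\notin\{-1,0,1\}$ and
$$ G^{-1}_l\simeq(\F^{i}_{i_0})^{a_l},\qquad G^{0}_l\simeq(\F^{j}_{j_0})^{b_l},\qquad G^{1}_l\simeq(\F^{k}_{k_0})^{c_l}, $$
exhibiting $\mathcal{E}_l$ as the cohomology of a complex of the shape $(\F^{i}_{i_0})^{a_l}\to(\F^{j}_{j_0})^{b_l}\to(\F^{k}_{k_0})^{c_l}$. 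The last step is to run through the cases according to which of $a_l,b_l,c_l$ vanish — noting that $b_l\ge 1$ whenever $\mathcal{E}_l\ne 0$, since $\mathcal{E}_l$ is then a nonzero subquotient of $(\F^{j}_{j_0})^{b_l}$ — and to check that in every case the complex above is an object of $\mathfrak{M}_{\F^{i}_{i_0},\F^{j}_{j_0},\F^{k}_{k_0}}$: this uses the Remark following Theorem~\ref{carmon}, which permits degenerate monads with $a_l,c_l\ge 0$, together with the fact that cokernel bundles ($c_l=0$), syzygy/kernel bundles ($a_l=0$) and direct sums $(\F^{j}_{j_0})^{b_l}$ ($a_l=c_l=0$) are all special cases of such monads. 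By the canonicity of the Beilinson monad construction this moreover yields $M^{\bullet}\simeq G^{\bullet}_1\oplus G^{\bullet}_2$, so the decomposition is realised at the level of complexes.

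I expect the only real obstacle to be the bookkeeping in the final step: one must enumerate the finitely many patterns of vanishing multiplicities $a_l,b_l,c_l$ and confirm that each still produces a genuine monad of the prescribed type, which is precisely where the extended version of Theorem~\ref{carmon} and the identification of cokernel bundles, syzygy bundles and trivial summands with degenerate monads are needed. The remaining ingredients — additivity of ${\rm Ext}$, local freeness of the summands, and the degree count for $G^{\bullet}_l$ — are routine.
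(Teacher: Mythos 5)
Your proposal follows essentially the same route as the paper's own proof: additivity of ${\rm Ext}$ gives natural cohomology for each summand with multiplicities $a_l,b_l,c_l$ summing to $a,b,c$, Lemma~\ref{beilinsonlemma} then identifies each $\mathcal{E}_l$ with a Beilinson monad concentrated in degrees $-1,0,1$ with terms $(\F^i_{i_0})^{a_l}$, $(\F^j_{j_0})^{b_l}$, $(\F^k_{k_0})^{c_l}$, and the final case analysis over vanishing multiplicities (using the remark extending Theorem~\ref{carmon} to $a_l,c_l\ge 0$) is exactly the enumeration the paper carries out. The argument is correct.
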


Suppose $m = \dim {\rm Hom}(\F^{i}_{i_0}, \F^j_{j_0})$, and $n = \dim{\rm Hom}(\F^j_{j_0}, \F^k_{k_0})$ and choose $\boldsymbol{\gamma}$ and $\boldsymbol{\sigma}$ bases of ${\rm Hom}(\F^i_{i_0}, \F^j_{j_0})$ and ${\rm Hom}(\F^j_{j_0}, \F^k_{k_0})$, respectively. Let ${\bf G}_{\boldsymbol{\gamma}, \boldsymbol{\sigma}}$ be the functor between $\mathfrak{M}_{\F^i_{i_0}, \F^j_{j_0}, \F^k_{k_0}}$ and $\mathfrak{S}_{m,n}^{\rm gis}$ described after equation (\ref{funtor G}). Note that given a monad of type (\ref{monadcaract}) with $a_l = 0$ the associated representation is 
$$ \xymatrix{
0 \ar@<1.8ex>[r]^{0} \ar@<-1.8ex>[r]^{\vdots}_{0} & \kappa^{b_l } \ar@<-1.8ex>[r]^{\vdots}_{B^l_n} \ar@<1.8ex>[r]^{B^l_1} & \kappa^{c_l}
}$$
which is $(\boldsymbol{\gamma}, \boldsymbol{\sigma})$-globally injective and surjective. If $c_l = 0$, the associated representation is 
$$ \xymatrix{
\kappa^{a_l}
\ar@<1.8ex>[r]^{A^l_1} \ar@<-1.8ex>[r]^{\vdots}_{A^l_m} & \kappa^{b_l } \ar@<-1.8ex>[r]^{\vdots}_{0} \ar@<1.8ex>[r]^{0} & 0
}$$
that is $(\boldsymbol{\gamma},\boldsymbol{\sigma})$-globally injective and surjective. If $a_l= c_l =0$, the associated representation is
$$ \xymatrix{
0 \ar@<1.8ex>[r]^{0} \ar@<-1.8ex>[r]^{\vdots}_{0} & \kappa^{b_l } \ar@<-1.8ex>[r]^{\vdots}_{0} \ar@<1.8ex>[r]^{0} & 0
}$$
which is also $(\boldsymbol{\gamma}, \boldsymbol{\sigma})$-globally injective and surjective. Hence we can prove the following.

\begin{theorem}\label{deccohomology}
Let $\mathcal{E}$ be a vector bundle on $X$ given by the cohomology of a monad in $\mathfrak{M}_{\F^i_{i_0}, \F^j_{j_0}, \F^k_{k_0}}$ and $R$ the associated $(\boldsymbol{\gamma}, \boldsymbol{\sigma})$-globally injective and surjective representation in $\mathfrak{S}_{m,n}^{{\rm gis}}$. Then $\mathcal{E}$ is decomposable if and only if $R$ is decomposable.
\end{theorem}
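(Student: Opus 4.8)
The strategy is to combine the three ingredients already assembled: the functor ${\bf G}_{\boldsymbol{\gamma},\boldsymbol{\sigma}}$ of \eqref{funtor G}, the equivalence of Proposition \ref{equivmonads}, and Lemma \ref{dirsumm} together with the discussion immediately preceding the statement, which shows that degenerate summands still correspond to globally injective and surjective representations. One direction is already in hand: by Proposition \ref{decomp-monad}, if $R$ is decomposable then so is the monad $M^\bullet$, and hence its cohomology $\mathcal{E}$. So the only thing to prove here is the converse: if $\mathcal{E} \simeq \mathcal{E}_1 \oplus \mathcal{E}_2$ with both summands nonzero, then $R$ is decomposable.

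\textbf{Main steps.} First I would invoke Theorem \ref{carmon} (in its degenerate-monad form, per the Remark after it): since $\mathcal{E}$ is the cohomology of a monad in $\mathfrak{M}_{\F^i_{i_0},\F^j_{j_0},\F^k_{k_0}}$, it has natural cohomology with respect to ${}^\vee{\bf B}$, and the same dimension-count of $\mathrm{Ext}$-groups carries over to each $\mathcal{E}_l$ by additivity of $\mathrm{Ext}$ with respect to direct sums. This yields the integers $a_l,b_l,c_l\ge 0$ with $a=a_1+a_2$, etc. Second, by Lemma \ref{beilinsonlemma} each $\mathcal{E}_l$ is isomorphic to its Beilinson monad $G^\bullet_l$, whose only possibly nonzero terms sit in degrees $-1,0,1$ and equal $(\F^i_{i_0})^{a_l}$, $(\F^j_{j_0})^{b_l}$, $(\F^k_{k_0})^{c_l}$ respectively; this is exactly the case analysis carried out before the statement, giving Lemma \ref{dirsumm}: $\mathfrak{M}_{\F^i_{i_0},\F^j_{j_0},\F^k_{k_0}}$ is closed under direct summands, where one allows degenerate monads. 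Third, I would note that the direct sum decomposition $\mathcal{E}\simeq\mathcal{E}_1\oplus\mathcal{E}_2$ is, at the level of Beilinson monads, a direct sum decomposition $M^\bullet\simeq M^\bullet_1\oplus M^\bullet_2$ of the monad itself: the Beilinson monad is functorial and carries the idempotent endomorphism cutting out $\mathcal{E}_1$ to an idempotent endomorphism of $M^\bullet$, whose image and kernel are precisely the $M^\bullet_l$. Fourth, apply the functor ${\bf G}_{\boldsymbol{\gamma},\boldsymbol{\sigma}}$: since it sends direct sums to direct sums (proved inside Proposition \ref{decomp-monad}), $R\simeq{\bf G}_{\boldsymbol{\gamma},\boldsymbol{\sigma}}(M^\bullet_1)\oplus{\bf G}_{\boldsymbol{\gamma},\boldsymbol{\sigma}}(M^\bullet_2)$, and by the observations preceding the theorem each ${\bf G}_{\boldsymbol{\gamma},\boldsymbol{\sigma}}(M^\bullet_l)$ lies in $\mathfrak{S}^{\rm gis}_{m,n}$ (including when a summand is degenerate), so $R$ is decomposable. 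Finally, one should check that neither summand representation is zero: since $\mathcal{E}_l\ne 0$, the corresponding Beilinson monad $G^\bullet_l$ is nonzero, so at least one of $a_l,b_l,c_l$ is nonzero, hence $R_l\ne 0$.

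\textbf{Expected obstacle.} The delicate point is the third step: passing from a decomposition of $\mathcal{E}$ as a sheaf to a decomposition of $M^\bullet$ as a complex in $\mathfrak{M}_{\F^i_{i_0},\F^j_{j_0},\F^k_{k_0}}$. One must be careful that the idempotent on $\mathcal{E}$ really does lift to an idempotent on the monad and that its pieces are again monads of the prescribed form (with the same block elements $\F^i_{i_0},\F^j_{j_0},\F^k_{k_0}$, only with smaller — possibly zero — multiplicities), rather than merely complexes with the right cohomology. This is precisely where the uniqueness of the Beilinson monad (Lemma \ref{beilinsonlemma}) and the case analysis yielding Lemma \ref{dirsumm} do the work: because each $\mathcal{E}_l$ has natural cohomology with respect to ${}^\vee{\bf B}$, its Beilinson monad is forced into the three-term shape \eqref{monadcaract}, so the decomposition of $\mathcal{E}$ is intrinsically a decomposition of monads of the required type. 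Once that is secured, the rest is a formal consequence of the functoriality of ${\bf G}_{\boldsymbol{\gamma},\boldsymbol{\sigma}}$ and of Lemma \ref{subobrep}.
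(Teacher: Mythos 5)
Your proposal is correct and follows essentially the same route as the paper: the forward direction via Proposition \ref{decomp-monad}, and the converse by combining natural cohomology with respect to ${}^\vee{\bf B}$, the Beilinson monad of each summand (Lemma \ref{beilinsonlemma}), Lemma \ref{dirsumm}, and the additivity of ${\bf G}_{\boldsymbol{\gamma},\boldsymbol{\sigma}}$ on the (possibly degenerate) summand monads. Your explicit discussion of lifting the idempotent on $\mathcal{E}$ to a decomposition of the monad itself spells out a step the paper's proof leaves implicit, but it is the same argument.
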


\begin{proof} We only need to prove the sufficient condition. If $\mathcal{E} \simeq \mathcal{E}_1 \oplus \mathcal{E}_2$ then from Lemma \ref{dirsumm}, $\mathcal{E}_i$, $i=1,2$, are cohomologies of monads in $\mathfrak{M}_{\F^i_{i_0}, \F^j_{j_0}, \F^k_{k_0}}$, therefore $R = {\bf G}_{\boldsymbol{\gamma},\boldsymbol{\sigma}}(\mathcal{E}) \simeq {\bf G}_{\boldsymbol{\gamma},\boldsymbol{\sigma}}(\mathcal{E}_1 \oplus \mathcal{E}_2)\simeq {\bf G}_{\boldsymbol{\gamma}, \boldsymbol{\sigma}}(\mathcal{E}_1) \oplus {\bf G}_{\boldsymbol{\gamma}, \boldsymbol{\sigma}}(\mathcal{E}_2) = R_1 \oplus R_2.$

\end{proof}


\subsection{An example: generalized Horrocks--Mumford monads}\label{anexample}

As an application of Theorem \ref{deccohomology}, let $X = \mathbb{P}^{2p}$ with $p\ge 2$, $\kappa=\mathbb{C}$, and consider the $2p$-block collection 
$$ {\bf B} = (\Omega_{\mathbb{P}^{2p}}^{2p}(2p), \Omega_{\mathbb{P}^{2p}}^{2p-1}(2p-1), \cdots, \Omega_{\mathbb{P}^{2p}}^1(1), \mathcal{O}_{\mathbb{P}^{2p}} ) $$
generating the bounded derived category $D^b({\mathbb{P}^{2p}})$. The complex
\begin{eqnarray}\label{monad HM}
\xymatrix{
\mathcal{O}_{\mathbb{P}^{2p}}(-1)^{2p+1} \ar[r]^{\alpha} & \Omega_{\mathbb{P}^{2p}}^p(p)^{2} \ar[r]^{\beta} & \mathcal{O}_{\mathbb{P}^{2p}}^{2p+1} }
\end{eqnarray}
is a monad, for  $\alpha = (\alpha_{ij}) \in \wedge^{p} \mathbb{C}^{2p+1} \otimes {\rm Mat}_{2 \times 2p+1}(\mathbb{C})$ and $\beta = (\beta_{ij}) \in \wedge^{p} \mathcal{C}^{2p+1} \otimes {\rm Mat}_{2p+1 \times 2}(\mathbb{C}) $ given by

\begin{itemize}
\item[] $\beta_{i1} = x_{1+i} \wedge x_{2+i} \wedge  \cdots \wedge x_{p+i};$
\item[] $\beta_{i2} = x_i \wedge x_{p+1+i} \wedge x_{p+2+i} \wedge \cdots \wedge x_{2p-1+i}$

\end{itemize} where $i \equiv k (\rm{mod}~ 2p+1)$ and the matrix $\alpha$ is given by

$$\alpha = (\beta Q)^{t}$$ with 

$$ Q = \left( \begin{array}{cc} 
0 & 1\\
(-1)^{p-1} & 0\\ 
\end{array} \right). $$ 

Note that when $p=2$, the monad (\ref{monad HM}) is precisely the one that yields, as its cohomology, the Horrocks--Mumford rank 2 bundle on $\mathbb{P}^{4}$. For this reason, monads of the form (\ref{monad HM}) are called \emph{generalized Horrocks--Mumford monads}. The goal of this section is to prove, as an application of Theorem \ref{deccohomology}, that the cohomology of a monad of type (\ref{monad HM}) is an indecomposable vector bundle of rank $2\left( \binom{2p}{p} -2p-1 \right)$ on $\mathbb{P}^{2p}$.

To this end, note that one can fix a basis of the vector space $\wedge^p \mathbb{C}^{2p+1}$ so that the quiver representation associated to the morphism
$$ \beta \in \rm{Hom}(\Omega^p_{\mathbb{P}^{2p}}(p)^{2}, \oh_{\mathbb{P}^{2p}}^{2p+1}) $$
is a representation of the Kronecker quiver $K_{\binom{2p+1}{p}}$ of the form
$$ R = (\{\mathbb{C}^2, \mathbb{C}^{2p+1}\}, \{\phi_{l}\}_{l=1}^{\binom{2p+1}{p}}) $$
where $4p+2$ elements $\phi_l$ are elementary matrices of size $(2p+1)\times 2$ for some $l$ and null matrices otherwise. The crucial step is the following result. 

\begin{lemma}
The representation $R$ is simple.
\end{lemma}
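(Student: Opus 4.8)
The plan is to establish the lemma in the sense in which ``simple'' is applied to representations throughout the paper, namely the Schur condition $\mathrm{Hom}(R,R)=\kappa$ (exactly as in the proof of Theorem~\ref{decon}); equivalently $\mathrm{End}(R)=\mathbb{C}$. The entire argument reduces to making the maps $\phi_l$ explicit, so I would begin there.

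Writing $\wedge^p\mathbb{C}^{2p+1}\cong\mathrm{Hom}(\Omega^p_{\mathbb{P}^{2p}}(p),\mathcal{O}_{\mathbb{P}^{2p}})$ in its monomial basis $\{e_I\}_{|I|=p}$, each entry $\beta_{ij}$ is a single basis wedge $e_{I(i,j)}$, so the coefficient of $e_I$ in $\beta$ is either $0$ or the elementary matrix $E_{ij}$ (the $(2p+1)\times 2$ matrix with a single $1$ in position $(i,j)$). The decisive combinatorial point is that the $4p+2$ index sets $I(i,1)$, $I(i,2)$, as $i$ runs over $\mathbb{Z}/(2p+1)$, are pairwise distinct; granting this, the nonzero $\phi_l$ are precisely the complete list of all $4p+2$ elementary $(2p+1)\times 2$ matrices, with every remaining $\phi_l$ equal to zero. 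Thus the structural feature driving the proof is that \emph{every} elementary matrix occurs among the maps of $R$.

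With this in hand the endomorphism computation is immediate. A morphism of $R$ is a pair $(Q,S)$ with $Q\in\mathrm{End}(\mathbb{C}^2)$, $S\in\mathrm{End}(\mathbb{C}^{2p+1})$ satisfying $S\,E_{ij}=E_{ij}\,Q$ for all $i,j$. Evaluating on the standard basis of $\mathbb{C}^2$ and using $E_{ij}(e_k)=\delta_{jk}\,e_i$, the relation becomes $\delta_{jk}\,Se_i=Q_{jk}\,e_i$ for all $i,j,k$: the choices $k\neq j$ force $Q$ to be diagonal, while $k=j$ forces $Se_i=Q_{jj}e_i$ for every $i$ and for both $j$, so $Q_{11}=Q_{22}=:\lambda$ and $S=\lambda I$. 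Hence $(Q,S)=\lambda\,(I_2,I_{2p+1})$ and $\mathrm{End}(R)=\mathbb{C}$, proving the lemma. To make the subobject structure fully transparent I would also read off, from the same elementary matrices, all subrepresentations: if $0\neq(v_1,v_2)\in W_1$ then $E_{ij}(v_1,v_2)=v_j e_i\in W_2$, so any nonzero coordinate already gives $e_i\in W_2$ for all $i$ and hence $W_2=\mathbb{C}^{2p+1}$; the subrepresentations are therefore exactly $(0,W_2)$, $(\ell,\mathbb{C}^{2p+1})$ with $\ell\subset\mathbb{C}^2$ a line, and $R$ itself. Checking this list against every splitting of the dimension vector $(2,2p+1)$ shows that $R$ has no nontrivial direct summand, confirming indecomposability in line with $\mathrm{End}(R)=\mathbb{C}$. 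Since $(0,\ell)$ is a proper nonzero subrepresentation for every line $\ell\subset\mathbb{C}^{2p+1}$, the operative notion of simplicity for $R$ is precisely the Schur condition just verified, consistent with its use elsewhere in the paper.

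The only genuine obstacle is the combinatorial distinctness of the $4p+2$ sets $\beta_{i1}=\{i+1,\dots,i+p\}$ and $\beta_{i2}=\{i\}\cup\{i+p+1,\dots,i+2p-1\}$ in $\mathbb{Z}/(2p+1)$; given it, the presence of all elementary matrices renders every remaining step formal. I would settle distinctness by observing that each $\beta_{i1}$ is a block of $p$ consecutive residues, that each $\beta_{i2}$ (whose complement is $\{i-1\}\cup\{i+1,\dots,i+p\}$) is never a single consecutive block for $p\ge 2$, and that within each family distinct shifts $i$ yield distinct subsets; this excludes all coincidences and shows each elementary matrix appears exactly once.
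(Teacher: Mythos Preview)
Your argument is correct and rests on the same structural observation as the paper's---that the $4p+2$ wedge monomials occurring as entries of $\beta$ are pairwise distinct, so that the nonzero $\phi_l$ exhaust the elementary $(2p+1)\times 2$ matrices---but you exploit it differently. The paper argues at the level of direct summands: given a subrepresentation $R_1=(V_1,V_2)$ with $V_1\neq 0$, it picks $v=(a,b)\in V_1$ with $a\neq 0$, observes that $\{\phi_j(v)\}$ already span $\mathbb{C}^{2p+1}$ so $V_2=\mathbb{C}^{2p+1}$, and concludes that any complementary summand must be zero. You instead compute $\mathrm{End}(R)$ directly from the intertwining relations $S\,E_{ij}=E_{ij}\,Q$, obtaining the Schur condition $\mathrm{End}(R)=\mathbb{C}$ on the nose; your subsequent enumeration of subrepresentations then recovers the paper's summand analysis as a corollary. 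Your route has the advantage of literally establishing ``simple'' in the sense used elsewhere in the paper (namely $\mathrm{Hom}(R,R)=\kappa$), whereas the paper's argument, as written, strictly yields only indecomposability---which is, however, all that the application to Theorem~\ref{deccohomology} requires. You also treat the combinatorial distinctness of the index sets more carefully than the paper, which simply asserts the spanning property; your observation that $\gcd(p,2p+1)=1$ forces distinctness within the second family, and that the $\beta_{i2}$ are never single cyclic intervals for $p\ge 2$, fills in exactly what is needed.
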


In particular, it follows from Theorem \ref{Teo1} that the kernel bundle $\ker\beta$, whose dual is a Steiner bundle, is also simple.

\begin{proof}
Suppose without loss of generality that the ordered basis is the following
$$ \{x_{01\cdots p-1}, x_{12\cdots p},x_{23 \cdots p+1}, \cdots, x_{2p 0 \cdots p-2}, \cdots   \} $$
where $x_{i_1 i_2 \cdots i_{p}} = x_{i_1} \wedge x_{i_2} \wedge \cdots \wedge x_{i_p}$. Then $\beta$ can be written as 

$$\beta =  x_{01\cdots p-1} \cdot E_{2p,1} + x_{12\cdots p}\cdot E_{2p+1,1} +
x_{23 \cdots p+1}\cdot E_{1,1} + \cdots   $$ 
where $E_{i,j} \in \rm{Mat}_{(2p+1)\times 2}(\C)$ is an elementary matrix.  The associated quiver representation is of the form $R = (\{\mathbb{C}^{2}, \mathbb{C}^{2p+1}\}, \{\phi_l\}_{l=1}^{\binom{2p+2}{2}})$ where $\phi_1= E_{2p,1}$, $\phi_2 = E_{2p+1, 1}, \varphi_{3} = E_{1,1}$ and so on. 

 Let $R_1 = (\{V_1,V_2\}, \{\psi\}_{l=1}^{\binom{2p+1}{p}})$ be a subrepresentation of $R$ and without loss of generality suppose $V_1 \neq 0$. Then there is $v = (a,b) \in V_1 \subset \C^2, $ with $a\neq 0$ . The following diagram commutes

\begin{eqnarray}
\xymatrix{\C^{2} \ar@<1.8ex>[r]^{\phi_1} \ar@<-1.8ex>[r]^>>>>{\vdots}_{\phi_{\binom{2p+1}{p}}} & \C^{2p+1} \\
& &  \\
V_1 \ar[uu]^{i_1}\ar@<1.8ex>[r]^{\psi_1} \ar@<-1.8ex>[r]^<<<<<<{\vdots}_{\psi_{\binom{2p+1}{p}}} & V_{2}\ar[uu]_{i_2}}
\end{eqnarray} and note that the vectors $\{\phi_j(v)\}_{j=1}^{2p+1}$ are linearly independent, hence $V_2 \simeq \C^{2p+1}$.
If $R_2 = (\{W_1,W_2\}, \{\gamma_l\}_{l=1}^{\binom{2p+1}{p}})$ is a subrepresentation of $R$ such that $R \simeq R_1 \oplus R_2$, then $W_2 \equiv 0$ and if $W_1 \neq 0$ there is $k$ such that $\phi_k \neq 0$ and ${\phi_k}\mid_{W_1} \neq 0$. Therefore $R$ is simple, hence indecomposable.
\end{proof}

Since $\alpha = (\beta Q)^t$, the representation of the Kronecker quiver $K_{\binom{2p+1}{p}}$ associated to $\alpha$ is of the form $R^{'}=(\{\C^{2p+1}, \C^2\}, \{\phi'_l \}_{l=1}^{\binom{2p+1}{p}}),$ where $\phi'_l$ are elementary matrices or null matrices (the transpose of $\phi_l$ up to sign). Hence $R'$ is also simple. By Theorem \ref{deccohomology}, the cohomology of the monad  (\ref{monad HM}) is an indecomposable vector bundle on $\mathbb{P}^{2p}$.


\end{document}